\title{Topological and algebraic properties \\ of universal groups for right-angled buildings}
\author{%
	Jens Bossaert\thanks{Ghent University, \href{mailto:jens.bossaert@ugent.be}{\nolinkurl{jens.bossaert@ugent.be}}.}
	\and
	Tom De Medts\thanks{Ghent University, \href{mailto:tom.demedts@ugent.be}{\nolinkurl{tom.demedts@ugent.be}}.}
}
\date{January 27, 2021}
\renewcommand\footnotemark{}
\DeclareMathOperator{\Sym}{Sym}
\DeclareMathOperator{\Aut}{Aut}
\DeclareMathOperator{\U}{\mathcal U}
\newcommand{\F}{\boldsymbol F}
\renewcommand{\P}{\mathcal P}
\newcommand{\R}{\mathcal R}
\DeclareMathOperator{\Res}{Res}
\DeclareMathOperator{\proj}{proj}
\DeclareMathOperator{\dist}{dist}
\newcommand\llangle{\left\langle\!\left\langle}
\newcommand\rrangle{\right\rangle\!\right\rangle}
\newcommand\restrict[3][]{{#2\rvert_{#3}^{#1}}}
\let\norleq\trianglelefteq
\newcommand\acts{\mkern 1mu\relax.\mkern 1mu\relax}
\newtheorem{theorem}{Theorem}[section]
\newtheorem{lemma}[theorem]{Lemma}
\newtheorem{proposition}[theorem]{Proposition}
\newtheorem{corollary}[theorem]{Corollary}
\newtheorem{conjecture}[theorem]{Conjecture}
\newtheoremstyle{step}{\topsep}{\topsep}{\itshape}{0pt}{\itshape}{\@. }{0pt}{\thmname{#1}\thmnumber{ #2}\thmnote{ (#3)}}
\theoremstyle{step}
\newcounter{stepcounter}
\newtheorem{proofstep}[stepcounter]{Step}
\theoremstyle{definition}
\newtheorem{definition}[theorem]{Definition}
\newtheorem{remark}[theorem]{Remark}
\tikzstyle{diagramnode}=[circle,fill=black,minimum size=5pt,inner sep=0pt]
\tikzset{myvertex/.style={circle,minimum width=5pt,draw=none,fill=black,inner sep=0pt}}
\tikzset{every label/.style={rounded rectangle,font=\strut,inner sep=0pt}}
\newcommand{\DrawEllipse}[5][]{
	\draw[#1,rotate around={#5:#2}] #2 ellipse[x radius=#3,y radius=#4];
}
\newcommand{\DrawlabelledEllipse}[7][]{
	\draw[#1,rotate around={#5:#2}] #2 ellipse[x radius=#3,y radius=#4];
	\node[rounded rectangle,anchor={180+#7}] at ($#2+(#7:{#3*#4/veclen(#3*sin(#7-#5),#4*cos(#7-#5))})$) {#6};
}
\begin{document}

\maketitle

\begin{abstract}
	We study universal groups for right-angled buildings. Inspired by Simon Smith's work on universal groups for trees, we explicitly allow local groups that are not necessarily finite nor transitive. We discuss various topological and algebraic properties in this extended setting. In particular, we characterise when these groups are locally compact, when they are abstractly simple, when they act primitively on residues of the building, and we discuss some necessary and sufficient conditions for the groups to be compactly generated.
	
	We point out that there are unexpected aspects related to the geometry and the diagram of these buildings that influence the topological and algebraic properties of the corresponding universal groups.
\end{abstract}

\paragraph*{Keywords.} right-angled buildings, universal groups, locally compact groups, simple groups.
\paragraph*{MSC2020.} 51E24, 22F50, 22D05.


\section{Introduction}

In a foundational paper in 2000, Marc Burger and Shahar Mozes introduced the notion of a \emph{universal group} acting on a regular locally finite tree, with a local action prescribed by a fixed finite permutation group \cite{burgermozes2000}. When endowed with the permutation topology, the universal groups give interesting examples of totally disconnected, locally compact groups.

More precisely, let $X$ be a finite set, let $F\leq\Sym(X)$ be a permutation group and let $\mathcal T_d$ be a regular tree with valency $d=|X|$. The Burger--Mozes universal group $\U(F)$ is defined to be the group of all automorphisms of $\mathcal T_d$ having the property that for every vertex~$v$, the local action from the neighbourhood of $v$ to its image is given by a permutation in $F$ (in the sense of \cref{def:localaction}).

Since then, the definitions and results have been generalised in various directions. For example, it is rather straightforward to allow for two permutation groups $F_1$ and $F_2$ by following essentially the same construction using the biregular tree with the correct valencies. Also, one can relax the finiteness condition on the two groups and allow for permutation groups of infinite degree, although the topological properties of the resulting universal groups become more subtle in this setting. Recently, Simon Smith has used this approach to obtain the first construction of uncountably many pairwise non-isomorphic simple topological groups that are totally disconnected, locally compact, compactly generated, and non-discrete \cite{smith2017}.

Tom De Medts, Ana Silva and Koen Struyve pursued a different direction of generalisation and extended the notion of a universal group to locally finite \emph{right-angled buildings} \cite{demedts2018,demedts2019}. Right-angled buildings are geometric structures sharing some essential properties with trees (so that the notion of a universal group is still meaningful) while having a richer combinatorial structure. The resulting groups are, again, totally disconnected and locally compact.

The goal of this paper is to unify both generalisations. We define universal groups over right-angled buildings with local actions prescribed by arbitrary permutation groups (not necessarily transitive nor of finite degree). We collect some results on the algebraic and topological structure of these groups. In particular, we give precise characterisations for when they are locally compact, abstractly simple, or for when they act primitively on residues of the building. We also discuss necessary and sufficient conditions for when the universal groups are compactly generated. Interestingly, a couple of these properties not only depend on properties of the local groups, but also on the combinatorial structure of the diagram of the building.

Some results required new purely geometric arguments or constructions using right-angled buildings. In particular, we want to highlight \cref{lem:convexhullfinite}, in which we argue that any finite number of chambers in a right-angled building is contained in a finite convex subset, and \cref{th:implosion}, in which we introduce a controlled way of ``imploding'' right-angled buildings (in a very different way than a projection map does).

\paragraph*{Acknowledgment.}

We are extremely grateful to the anonymous referee, who did an impressive job reading our paper in detail, and made a variety of insightful observations, careful remarks and helpful suggestions. In particular, \cref{referee1}, \cref{referee2} and \cref{rem:fibres} would not have seen the light without the referee's feedback.

\paragraph*{Outline of the paper.}

In section~\ref{se:prelim}, we assemble the required preliminary definitions and results about topological groups, right-angled buildings and universal groups.
In section~\ref{se:props}, we collect some properties about the universal groups of right-angled buildings that do not require much more than extending earlier ideas to our new setting, and the results we obtain are natural and perhaps not very surprising. In particular, we characterise local compactness (\cref{prop:locallycompact}), compact generation of panel stabilisers (\cref{prop:stabcompgen}) and discreteness (\cref{prop:discrete}).

In section~\ref{se:compgen}, we deal with the more delicate problem of characterising compact generation. Our main result in this section is \cref{thm:compgen2}, providing a sufficient condition for compact generation. The converse is more delicate: the general case is left as a conjecture, but we discuss it thoroughly in the rest of this section.

Section~\ref{se:simp} deals with the simplicity of the universal groups. We obtain a complete characterisation in \cref{thm:simplicity}. The simplicity depends in an interesting way on the interplay between the diagram and the prescribed local actions. The proof relies on the new concept of implosion that we have mentioned above; see, in particular, \cref{th:implosion2}.

In section~\ref{se:prim}, finally, we investigate when the induced action on the set of residues of a given type is primitive. We provide a complete answer to this question in \cref{thm:prim}. Once again, this depends non-trivially on the diagram of the right-angled building.


\section{Preliminaries, notation, terminology}\label{se:prelim}

\subsection{Topological groups}

\begin{definition}[permutation topology]
	Consider a permutation group $G\leq\Sym(X)$. We give $G$ the structure of a topological group by taking as a neighbourhood basis of the identity the set of pointwise stabilisers of finite subsets of $X$.
\end{definition}

If we interpret $X$ as a discrete topological space and elements of $G$ as maps $X\to X$, then the permutation topology on $G\leq\Sym(X)$ agrees with the topology of pointwise convergence.

Since we will frequently deal with pointwise and setwise stabilisers, it is desirable to fix a clear notation. For a group $G$ acting on a set $X$, we will denote the pointwise stabiliser of some subset $Y\subseteq X$ as 
\[G_{(Y)} = \{g\in G\mid \text{$g\acts y = y$ for all $y\in Y$}\},\]
and the setwise stabiliser as
\[G_{\{Y\}} = \{g\in G\mid \text{$g\acts y \in Y$ for all $y\in Y$}\}.\]
Whenever $Y=\{y\}$, we will simply write the stabiliser as $G_y$ since both notions obviously agree. However, in order to avoid potential confusion, we will never use an abbreviated notation like $G_Y$ when $Y$ is not a singleton.

For a good overview of the interplay between permutational and topological properties of permutation groups, we refer the reader to \cite{moller10} or \cite{woess91}. In particular, we will make frequent use of the following facts.

\begin{proposition}\label{prop:permtop}
	Let $G\leq\Sym(X)$ be endowed with the permutation topology.
	\begin{enumerate}
		\let\emph\relax
		\item $G$ is always \emph{Hausdorff} and \emph{totally disconnected}.
		\item $G$ is \emph{open} if and only if $G$ contains the pointwise stabiliser of a finite subset of $X$.
		\item $G$ is \emph{closed} if and only if some point stabiliser is closed, which holds if and only if all point stabilisers are closed.
		\item $G$ is \emph{discrete} if and only if $X$ contains a finite subset with trivial pointwise stabiliser.
		\item $G$ is \emph{compact} if and only if $G$ is closed and all orbits of $G$ are finite.
		\item If $G$ is closed, then $G$ is \emph{locally compact} if and only if $X$ contains a finite subset with compact pointwise stabiliser.
	\end{enumerate}
\end{proposition}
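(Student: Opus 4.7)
Each item unwinds from the fact that the basic neighbourhoods of $1$ in $G$ are the pointwise stabilisers $G_{(F)}$ of finite $F\subseteq X$, and these are open subgroups.

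I would handle (i), (ii) and (iv) directly from this basis. Each $G_{(F)}$ is a subgroup whose complement in $G$ is a union of cosets of itself, hence clopen, giving a basis of clopen neighbourhoods at $1$ and thus total disconnectedness; Hausdorffness follows since for any $g \neq 1$ some $x$ with $g \acts x \neq x$ places $g$ outside the clopen $G_{(\{x\})}$. Statement (ii) is the definition of openness in $\Sym(X)$ spelled out: $G$ is open iff it contains an open neighbourhood of $1$, and these are exactly the $\Sym(X)_{(F)}$ for finite $F$. Statement (iv) is the specialisation to the open singleton $\{1\}$.

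For (iii), the forward direction is immediate from $G_x = G \cap \Sym(X)_x$ with $\Sym(X)_x$ closed in $\Sym(X)$. For the converse, assume $G_x$ is closed for some $x$ and take $h$ in the closure of $G$ in $\Sym(X)$; by the permutation topology there is $g \in G$ with $g \acts x = h \acts x$, so $g^{-1}h \in \Sym(X)_x$. Approximating $h$ by a net $(g_\alpha)\subseteq G$ yields $g^{-1}g_\alpha \to g^{-1}h$, and for large $\alpha$ we have $g^{-1}g_\alpha \in G_x$; thus $g^{-1}h \in \overline{G_x} = G_x$, and $h \in g G_x \subseteq G$.

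For (v), the forward direction follows because $G \acts x$ is the continuous image of compact $G$ in discrete $X$. For the converse, I would embed $G$ into $\prod_{x \in X}(G \acts x)$, which is compact by Tychonoff, and verify that $G$ is closed in this product: a limit $h$ of a net in $G$ inside $X^X$ is injective (a failure of injectivity is witnessed on two points and would force some $g_\alpha$ to agree there) and is surjective orbit-wise (each finite $G \acts y$ is $h$-stable, so $h$ restricts there to an injection of a finite set, hence a bijection), whence $h \in \Sym(X)$, and closedness of $G$ in $\Sym(X)$ then gives $h \in G$. Statement (vi) then combines (ii) and (v): a compact neighbourhood of $1$ contains some basic open $G_{(F)}$ which is clopen in it and so compact, while conversely a compact $G_{(F)}$ is itself a compact open neighbourhood of $1$; compactness of $G_{(F)}$ via (v) amounts, under the standing hypothesis that $G$ (and hence $G_{(F)}$) is closed, to all orbits of $G_{(F)}$ being finite.

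The principal difficulty is the verification in (v) that limits in $X^X$ of elements of $G$ remain bijections. This is the only step where the finiteness of orbits is genuinely used, via the combinatorial observation that an injective self-map of a finite set is automatically surjective; everything else is a careful bookkeeping of the defining neighbourhood basis.
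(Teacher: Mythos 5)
The paper does not supply a proof of this proposition; it refers the reader to external sources (\cite{moller10}, \cite{woess91}). Your argument is therefore necessarily independent, and after checking it I find it correct and complete.

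The items (i), (ii), (iv) are handled exactly as one would expect from the definition of the neighbourhood basis; your observations that each $G_{(F)}$ is a clopen subgroup and that $\{1\}$ is open precisely when some $G_{(F)}$ is trivial are the standard ones. Your proof of (iii) is careful: the only point one might worry about is whether nets are really needed (they are, since the permutation topology need not be first countable when $X$ is uncountable), and you correctly use them, choosing $g\in G$ with $g\acts x = h\acts x$ via the basic open neighbourhood $h\cdot\Sym(X)_x$ and then transporting the tail of the approximating net into $G_x$. For (v), the forward direction (continuous image of compact in discrete) is routine; the converse is the real content and your Tychonoff embedding $G\hookrightarrow \prod_{x\in X}(G\acts x)$ is the right device. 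You correctly identify the crux: a pointwise limit $h$ of permutations need not be a permutation in general, and the finiteness of orbits is used precisely to upgrade injectivity to bijectivity orbit-by-orbit (an injective self-map of a finite set is a bijection). The remaining appeal to closedness of $G$ in $\Sym(X)$ then closes the argument, and (vi) follows cleanly by combining (ii) and (v), using that $G_{(F)}$ is open (hence closed) in $G$ and that $G$ closed in $\Sym(X)$ makes $G_{(F)}$ closed as well so (v) applies to it. Nothing is missing.
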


\subsection{Buildings}

We will follow the combinatorial approach to buildings and assume familiarity with most of the basic notions of building theory. Unless explicitly mentioned otherwise, we adopt the definitions and notations from the books \cite{ronan} by Mark Ronan and \cite{weiss2003} by Richard Weiss.

From now on, $\Delta$~will always denote a building of type $M=(m_{ij})$ over a finite index set $I$. Recall that the \emph{rank} of $\Delta$ is by definition the cardinality of $I$. We view $\Delta$ as a chamber system, so we will draw chambers as vertices and connect $i$-adjacent chambers by edges labelled $i$. We write $i$-adjacency as $\sim_i$ (or simply as $\sim$ when the type $i$ is irrelevant). A \emph{gallery} is a path of consecutive chambers
	\[c_0 \sim_{i_1} c_1 \sim_{i_2} \dots \sim_{i_k} c_k\]
	and its \emph{type} is the sequence $(i_1, i_2,\dots, i_k)$. For a subset $J\subseteq I$, a \emph{$J$-residue} is a maximal subset of chambers of $\Delta$ that can be connected by some gallery whose type contains only elements in $J$. An $\{i\}$-residue is simply called an \emph{$i$-panel} and is then a maximal subset of pairwise $i$-connected chambers. We will usually denote a panel or a residue by $\mathcal P$ or $\mathcal R$, respectively. The set of all $J$-residues of $\Delta$ will be denoted by $\Res_J(\Delta)$. We usually identify the building and its residues with the sets of their chambers.
	
	A (type-preserving) automorphism of $\Delta$ is a bijective map on the chamber set that preserves $i$-adjacency for all $i\in I$. We denote the group of all automorphisms by $\Aut(\Delta)$.

	For basic results about residues and for related notions (such as minimal galleries, elementary homotopies, projections, and convexity), we refer to \cite{ronan} or \cite{weiss2003}.

\begin{definition}[right-angled]
	The building $\Delta$ is called \emph{right-angled} if either $m_{ij}=2$ or $m_{ij}=\infty$ for all distinct $i,j\in I$, or in other words, if the Coxeter diagram of $\Delta$ only has edges labelled $\infty$.
\end{definition}

In particular, a tree $\mathcal T$ can be seen as a right-angled building of rank $2$: the chambers are the edges of $\mathcal T$, and the two types of adjacency are defined by sharing a vertex in one of the two bipartition classes. A vertex of $\mathcal T$ then corresponds to a panel of the building. Note that this structure, viewed as a chamber system, would actually be drawn as the line graph of $\mathcal T$.

\begin{definition}[semiregular]
	The building $\Delta$ is \emph{semiregular with parameters $(q_i)_{i\in I}$} if all $i$-panels have the same (possibly infinite) cardinality $q_i\geq 2$.
\end{definition}

In sharp contrast with the case of spherical buildings, semiregular right-angled buildings have essentially no restrictions on their parameters. The following theorem is due to Fr\'ed\'eric Haglund and Fr\'ed\'eric Paulin.

\begin{theorem}[Haglund--Paulin]
	For any choice of (possibly infinite) cardinal numbers $(q_i)_{i\in I}$ with each $q_i\geq 2$, there exists a semiregular right-angled building with these parameters. Moreover, this building is unique up to isomorphism.
\end{theorem}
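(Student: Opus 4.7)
The plan is to prove existence by an explicit construction and uniqueness by a back-and-forth argument, exploiting that the only non-trivial relations in a right-angled Coxeter group $W$ are the commutations $s_is_j=s_js_i$ whenever $m_{ij}=2$. Because no braid relations appear, one expects the combinatorics of $\Delta$ to be determined completely by the panel sizes once a base chamber is fixed.

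For existence, let $W$ be the right-angled Coxeter group of type $M$ with canonical generators $s_i$, and for each $i\in I$ fix a set $X_i$ of cardinality $q_i$ containing a distinguished base point $*_i$. I would take as chamber set the set of equivalence classes of words $x_{i_1}\cdots x_{i_k}$ with $x_{i_j}\in X_{i_j}$ and $s_{i_1}\cdots s_{i_k}$ reduced in $W$, modulo the commutation relations $xy\sim yx$ for $x\in X_i$, $y\in X_j$ whenever $m_{ij}=2$. Declare two such classes to be $i$-adjacent if one is obtained from the other by appending or cancelling a letter of $X_i\setminus\{*_i\}$ after suitable commutations to the end; the empty word then represents the base chamber, and every $i$-panel has exactly $q_i$ members. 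The $W$-valued distance $\delta(c,c')$ is read off from the type of any minimal gallery, and verifying Tits's building axioms in the right-angled setting reduces to the rank-$2$ case $m_{ij}=2$, where the corresponding residue is visibly a direct product of two panels, i.e.\ a generalised digon.

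For uniqueness, given two such buildings $\Delta,\Delta'$ with chosen base chambers $c_0,c_0'$, I would build an isomorphism $\varphi\colon\Delta\to\Delta'$ inductively on combinatorial distance from $c_0$. At each step, for a chamber $c$ with $\varphi(c)$ already defined and an $i$-panel $\mathcal P\ni c$ not yet handled, any bijection between $\mathcal P\setminus\{c\}$ and the corresponding $i$-panel through $\varphi(c)$ minus $\varphi(c)$ works, since both sides have $q_i-1$ elements. The delicate point, which I expect to be the main obstacle, is well-definedness when a chamber $d$ is reached by two different minimal galleries from $c_0$: one must show both routes induce the same image. By Tits's solution to the word problem for Coxeter groups, two minimal galleries between the same endpoints are related by a sequence of elementary $M$-homotopies, which in the right-angled case reduce to commutations inside rank-$2$ residues of type $m_{ij}=2$. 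Any such residue is a generalised digon, so every chamber in it is uniquely determined by its projections on two intersecting panels; both routes therefore assign to $d$ the same image in $\Delta'$, and the inductive construction extends $\varphi$ to an isomorphism defined on all of $\Delta$.
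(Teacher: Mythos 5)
The paper does not give its own proof of this theorem; it simply cites \cite[Proposition~1.2]{haglundpaulin03}. Your proposal is therefore a self-contained attempt, and the overall route (an explicit word model for existence, induction on gallery distance together with Tits's solution of the word problem for uniqueness) is the standard one and is sound in spirit.

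Two points need repair. In the existence construction the letters must be taken from $X_{i_j}\setminus\{*_{i_j}\}$ rather than from all of $X_{i_j}$: as written, the words $*_1x_2$ and $x_2$ have distinct reduced underlying types $s_1s_2$ and $s_2$ in $W$, so they are different classes in your model, yet they ought to name the same chamber; worse, since your adjacency only appends or cancels a non-basepoint letter and $*_1y_1$ is never reduced, the word $*_1$ would lie on a $1$-panel of cardinality $1$. Restricting the alphabet to $X_{i}\setminus\{*_{i}\}$ and letting the empty word represent $c_0$ fixes this, and every $i$-panel then has the correct cardinality $q_i$. In the uniqueness argument the claim that ``any bijection works'' is too strong and is not how the induction should be organised. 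When the frontier reaches a chamber $d$ at distance $n+1$ adjacent to two or more chambers at distance $n$, say $d_1\sim_i d\sim_j d_2$, a closing-squares argument forces $m_{ij}=2$ and places $d$ inside a rank-$2$ digon whose other three corners are already mapped; $\varphi(d)$ is then uniquely determined, and the only genuinely free choices are on panels whose sole chamber at distance $\leq n$ is the base chamber of the panel. Your appeal to Tits homotopies is the right tool for reconciling different minimal galleries, but it needs to be stated as a verification that the \emph{forced} assignments coming from distinct digons through $d$ agree, not as a posteriori compatibility check on arbitrarily chosen bijections. With those two adjustments the argument goes through.
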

\begin{proof}
	See \cite[Proposition 1.2]{haglundpaulin03}.
\end{proof}

We will mostly focus on irreducible buildings.
\begin{definition}[irreducible]
	The building $\Delta$ is called \emph{reducible} if the index set~$I$ is a disjoint union of two non-empty subsets $I = I_1\sqcup I_2$ such that $m_{i_1i_2}=2$ for every $i_1\in I_1$ and every $i_2\in I_2$. It is called \emph{irreducible} otherwise. In other words, a building is irreducible if its Coxeter diagram is connected (as a graph).
\end{definition}
Notice that a reducible building can be decomposed as a direct product $\Delta\cong\Delta_1\times\Delta_2$ where $\Delta_1$ and $\Delta_2$ are right-angled buildings of type $I_1$ and $I_2$, respectively.

An important combinatorial property of right-angled buildings is the following lemma, that in certain circumstances allows us to ``complete squares'' in the building.
\begin{lemma}[closing squares]
	\label{lem:closingsquares}
	Let $c_0$ be a fixed chamber in a right-angled building $\Delta$.
	\begin{enumerate}
		\item Let $c,d_1,d_2\in\Delta$ be such that $\dist(c_0,c)=n+1$, $\dist(c_0,d_1)=\dist(c_0,d_2)=n$, and $d_1\sim_i c\sim_j d_2$ for some $i\neq j\in I$. Then $m_{ij}=2$ and there exists a chamber~$e$ such that $\dist(c_0,e)=n-1$ and $d_1\sim_j e\sim_i d_2$.
		\item Let $c_1,c_2,d_1\in\Delta$ be such that $\dist(c_0,c_1)=\dist(c_0,c_2)=n+1$, $\dist(c_0,d_1)=n$ and $d_1\sim_i c_1\sim_j c_2$ for some $i\neq j\in I$. Then $m_{ij}=2$ and there exists a chamber~$d_2$ such that $\dist(c_0,d_2)=n$ and $d_1\sim_j d_2\sim_i c_2$.
	\end{enumerate}
	\[\begin{tikzpicture}
		\node[myvertex,label=above:$c_0$] (C0) {};
		\draw[dotted,thick] (18:3) arc (18:-18:3) node[font=\scriptsize,below] {$n-1$}
				(0:3) node[myvertex,label=150:$e$] (E) {}
			(18:4) arc (18:-18:4) node[font=\scriptsize,below] {$n$}
				(10:4) node[myvertex,label=30:$d_1$] (D1) {}
				(-10:4) node[myvertex,label=330:$d_2$] (D2) {}
			(16:5) arc (16:-16:5) node[font=\scriptsize,below] {$n+1$}
				(0:5) node[myvertex,label=30:$c$] (C) {};
		\draw (C0) --++ (45:.15) \foreach\i in {1,...,6} {--++ (-45:.3) --++ (45:.3)} --++ (-45:.3) -- (E);
		\draw[every node/.style={font=\scriptsize}]
			(E) to node[below,pos=.7] {$j$} (D1)
				to node[below,pos=.3] {$i$} (C)
				to node[above,pos=.7] {$j$} (D2)
				to node[above,pos=.3] {$i$} (E);
	\end{tikzpicture}
	\qquad
	\begin{tikzpicture}
		\node[myvertex,label=above:$c_0$] (C0) at (1,0) {};
		\draw[dotted,thick] (18:4) arc (18:10:4)
			(-10:4) arc (-10:-18:4) node[font=\scriptsize,below] {$n$}
				(8.5:4) node[myvertex,label=30:$d_1$] (D1) {}
				(-8.5:4) node[myvertex,label=330:$d_2$] (D2) {}
			(16:5) arc (16:9:5) (-9:5) arc (-9:-16:5) node[font=\scriptsize,below] {$n+1$}
				(9:5) node[myvertex,label=30:$c_1$] (C1) {}
				(-9:5) node[myvertex,label=330:$c_2$] (C2) {};
		\draw (C0) --++ (55:.15) \foreach\i in {1,...,6} {--++ (-35:.3) --++ (55:.3)} --++ (-35:.3) -- (D1);
		\draw (C0) --++ (-55:.15) \foreach\i in {1,...,6} {--++ (35:.3) --++ (-55:.3)} --++ (35:.3) -- (D2);
		\draw[every node/.style={font=\scriptsize}]
			(D1) to node[below] {$i$} (C1)
				to node[left] {$j$} (C2)
				to node[above] {$i$} (D2)
				to node[right] {$j$} (D1);
	\end{tikzpicture}\]
\end{lemma}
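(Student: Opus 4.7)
The plan is to work inside the $\{i,j\}$-residue $\R$ containing all the chambers involved (they lie in a common such residue since each adjacency is of type $i$ or $j$). Let $p := \proj_{\R}(c_0)$ and set $k := \dist(c_0, p)$. By the gate property of residues, $\dist(c_0, x) = k + \dist(p, x)$ for every $x \in \R$, so the hypotheses translate into distance statements from $p$ inside the rank-2 right-angled residue $\R$.

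The main obstacle is to establish that $m_{ij} = 2$. Assume for contradiction that $m_{ij} = \infty$; then the Weyl group of $\R$ is the infinite dihedral group, in which every element admits a unique reduced expression, namely the alternating word in $s_i, s_j$ of appropriate length and first letter. In part~(i), the galleries $p \sim \cdots \sim d_1 \sim_i c$ and $p \sim \cdots \sim d_2 \sim_j c$ are both minimal of length $n+1-k$ (by the gate), so their types are reduced expressions for the same Weyl group element; since they end in $s_i$ and $s_j$ respectively, uniqueness is violated. In part~(ii), a minimal gallery $p \sim \cdots \sim d_1 \sim_i c_1$ of length $n+1-k$ has alternating type $w$ ending in $s_i$, so extending by $c_1 \sim_j c_2$ yields the still-alternating word $w s_j$, which is therefore also reduced; this forces $\dist(p, c_2) = n+2-k$, contradicting $\dist(p, c_2) = n+1-k$.

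With $m_{ij} = 2$ established, the residue $\R$ is a ``grid'': it is the direct product of an $i$-panel and a $j$-panel, so each chamber of $\R$ is uniquely determined by the pair of such panels containing it, and the distance between two chambers of $\R$ equals the number of these coordinates in which they differ (giving values in $\{0,1,2\}$). Define $e$ in part~(i), resp.\ $d_2$ in part~(ii), as the unique ``fourth corner'' sharing the $j$-panel with $d_1$ and the $i$-panel with $d_2$, resp.\ $c_2$; the required adjacencies $d_1 \sim_j e \sim_i d_2$ and $d_1 \sim_j d_2 \sim_i c_2$ then hold by construction. Finally, the distance hypotheses pin $p$ down inside $\R$: in part~(i) they force $p = e$ (so $\dist(p, e) = 0 = n-1-k$), and in part~(ii) they force $p$ to share its $i$-panel with $d_1$ and to lie outside both the $j$-panels of $c_1$ and $c_2$ (so $\dist(p, d_2) = 1 = n-k$). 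Applying the gate formula yields $\dist(c_0, e) = n-1$ and $\dist(c_0, d_2) = n$, as required.
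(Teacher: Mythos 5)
Your proof is correct and is a clean, self-contained argument; the paper itself does not reprove this lemma but simply cites Lemmas 2.9 and 2.10 of De Medts--Silva--Struyve, so your proposal fills a gap by giving a complete derivation. Your two-stage strategy (reduce to the rank-2 residue $\R$ via the gate $p=\proj_\R(c_0)$; first rule out $m_{ij}=\infty$ using uniqueness of reduced words in the infinite dihedral group, then exploit the grid structure of $\R$ when $m_{ij}=2$) is a natural and efficient route, and all the essential steps are in order.

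One small slip to fix in the very last sentence: in part (ii) you write that the distance constraints ``force $p$ to share its $i$-panel with $d_1$ and to lie outside both the $j$-panels of $c_1$ and $c_2$,'' but this has $i$ and $j$ interchanged. Writing $c_1=(A,B)$, $d_1=(A,B')$, $c_2=(A',B)$ and $d_2=(A',B')$ in the grid coordinates ($i$-panel, $j$-panel), the constraints $\dist(p,c_1)=\dist(p,c_2)=2$ and $\dist(p,d_1)=1$ force $p=(\alpha,B')$ with $\alpha\notin\{A,A'\}$: that is, $p$ shares its \emph{$j$-panel} with $d_1$ (and $d_2$), and lies outside the \emph{$i$-panels} of $c_1$ and $c_2$. (Note also that $c_1$ and $c_2$ have the \emph{same} $j$-panel, so ``both the $j$-panels of $c_1$ and $c_2$'' would be a single panel.) The displayed conclusion $\dist(p,d_2)=1=n-k$ and hence $\dist(c_0,d_2)=n$ is nevertheless correct, so this is purely a transcription error and not a gap in the argument.
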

\begin{proof}
	We refer to Lemma 2.9 and Lemma 2.10 in \cite{demedts2018}.
\end{proof}

Finally, for some more technical lemmas (\cref{lem:parallellocals}, \cref{lem:extendlocaluniversal}, \cref{lem:welldefinedicolours}) we will need the notion of parallelism of panels from \cite{caprace14}. We recall some definitions and crucial results. Note that \cite{caprace14} considers parallelism in a more general sense, for arbitrary residues; for our purposes, parallelism of panels will suffice.
\begin{definition}[parallelism]
	Panels $\P_1$ and $\P_2$ are called \emph{parallel} if $\proj_{\P_1}(\P_2) = \P_1$ and $\proj_{\P_2}(\P_1) = \P_2$.
\end{definition}
\begin{definition}
	For $k\in I$, we define $k^\perp = \{i\in I\setminus\{k\} \mid ik=ki\} = \{i\in I \mid m_{ik}=2\}$.
\end{definition}
\begin{proposition}
\label{prop:parallelism}
	Let $\P$ and $\P'$ be two panels.
	\begin{enumerate}
		\item If two chambers of $\P'$ have distinct projections on $\P$, then $\P$ and $\P'$ are parallel.
		\item Let $k$ be the type of $\P$. Then $\P'$ is parallel to $\P$ if and only if $\P'$ is of type $k$ and $\P$ and $\P'$ are contained in a common residue of type $k\cup k^\perp$.
		
		In particular, parallel panels have the same type.
	\end{enumerate}
\end{proposition}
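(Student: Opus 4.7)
The plan is to build both parts from two standard structural facts about right-angled buildings. First, since every generator in $k^\perp$ commutes with $k$, the $(k \cup k^\perp)$-residue $\R$ containing $\P$ splits as a direct product $\R \cong \P_0 \times \R'_0$ with $\P_0$ a $k$-panel and $\R'_0$ a $k^\perp$-residue, and in this decomposition every $k$-panel of $\R$ is of the form $\P_0 \times \{e\}$. Second, projection onto any residue is adjacency-preserving: $c \sim_i c'$ implies $\proj_\R(c) = \proj_\R(c')$ or $\proj_\R(c) \sim_i \proj_\R(c')$, an elementary consequence of the gate property and the triangle inequality.

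With these tools the ``if'' direction of (ii) is immediate: if $\P$ and $\P'$ are $k$-panels inside a common $(k \cup k^\perp)$-residue $\R \cong \P_0 \times \R'_0$, they are slices $\P_0 \times \{e\}$ and $\P_0 \times \{e'\}$, and the product description of the projection yields $\proj_\P(\P') = \P$ and, symmetrically, $\proj_{\P'}(\P) = \P'$.

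For (i) and the converse of (ii), I first show that $\P'$ has the same type as $\P$. Given $c_1, c_2 \in \P'$ with distinct projections $d_1, d_2 \in \P$, I join them by a gallery of type $k'$ (the type of $\P'$) inside $\P'$; by adjacency preservation of $\proj_\P$, the image sequence in $\P$ consists of equal or $k'$-adjacent consecutive chambers, so $d_1$ and $d_2$ are $k'$-adjacent. Being also $k$-adjacent and distinct, this forces $k' = k$. Next, I show $\P' \subseteq \R$ by induction on $n := \dist(c_1, \P)$ for arbitrary $c_1 \in \P'$. The case $n = 0$ is trivial. For $n \geq 1$, pick $c_2 \sim_k c_1$ in $\P'$ with $\proj_\P(c_2) = d_2 \neq d_1 = \proj_\P(c_1)$ (possible since $\proj_\P|_{\P'}$ is not constant), and let $c_1 \sim_{i_1} e_1$ be the first step of a minimal gallery from $c_1$ to $d_1$. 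If $i_1 = k$ then $e_1 \in \P'$ at distance $n-1$ from $\P$, and the induction hypothesis gives $e_1 \in \R$, hence $c_1 \in \R$. If $i_1 \neq k$, applying \cref{lem:closingsquares} to a suitable configuration involving the adjacencies $e_1 \sim_{i_1} c_1 \sim_k c_2$ together with a minimal gallery $c_2 \to d_2$ forces $m_{i_1 k} = 2$ and closes a square reducing the situation to a chamber at distance $n-1$ from $\P$. Once $\P' \subseteq \R$ is established, both panels sit inside a common $(k \cup k^\perp)$-residue and the product decomposition gives parallelism, completing both (i) and the converse of (ii).

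The main obstacle is this inductive closing-squares argument: at each level one must assemble a four-tuple of chambers whose pairwise distances to an appropriate base chamber match the hypotheses of \cref{lem:closingsquares}, and check that closing the square genuinely reduces to the inductive hypothesis rather than just reshuffling data. The overall strategy parallels \cite[Proposition 2.8]{caprace14}.
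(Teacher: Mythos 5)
The paper does not prove this statement: it simply cites \cite[Propositions 2.5 and 2.8]{caprace14}, so there is no ``paper proof'' to compare against. Your attempt is a sketch of the argument from Caprace's paper, as you say yourself; the broad outline (direct product decomposition of the $k\cup k^\perp$-residue, closing-squares descent) is the right one, but two of the steps have genuine gaps.

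First, the claim that ``projection onto any residue is adjacency-preserving: $c\sim_i c'$ implies $\proj_\R(c)=\proj_\R(c')$ or $\proj_\R(c)\sim_i\proj_\R(c')$'' is not an elementary consequence of the gate property and the triangle inequality, and in fact is \emph{false} in general buildings. Those two tools only give $\dist\bigl(\proj_\R(c),\proj_\R(c')\bigr)\leq 1$, i.e.\ the images are equal or adjacent, with no control on the type. A concrete counterexample lives in a building of type $A_2$: with $W=\langle s,t\mid s^2=t^2=(st)^3=1\rangle$ and $\R$ an $s$-panel, a chamber $c$ with $\delta(c,\proj_\R c)=st$ and $c'\sim_t c$ has $\proj_\R(c)\sim_s\proj_\R(c')$, so $t$-adjacency projects to $s$-adjacency. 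The statement you need \emph{is} true in right-angled buildings, but this requires a separate argument (roughly: from $ws_j=s_iw'$ with $\ell(s_iw)=\ell(ws_j)=\ell(w)+1$ and $\ell(s_iws_j)=\ell(w)$, the right-angled reduction rules force $i=j$). As written, your proof of ``parallel panels have the same type'' rests on a false general claim.

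Second, the induction ``on $n:=\dist(c_1,\P)$ for arbitrary $c_1\in\P'$'' does not bottom out as stated. When $i_1\neq k$, the closing-squares move produces chambers $e_1,f$ at distance $n-1$ from $\P$ with distinct projections, but these lie in a \emph{new} $k$-panel $\P''\neq\P'$, not in $\P'$; so an inductive hypothesis phrased about $\P'$ alone does not apply. The induction has to be over all pairs $(\Q,\text{chamber})$ where $\Q$ is a $k$-panel with non-constant projection onto $\P$, with the base case $\Q=\P$ recovered because a $k$-panel meeting $\P$ must equal $\P$. You are also implicitly using (and would need to verify) that $\proj_\P(e_1)=d_1$, $\proj_\P(f)=d_2$, and $\dist(d_2,c_1)=n+1$, $\dist(d_2,e_1)=\dist(d_2,c_2)=n$ in order to match the hypotheses of \cref{lem:closingsquares}(i) with base chamber $c_0=d_2$; these are all true (by the gate property) but none of them is free. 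You flag the closing-squares step as the main obstacle, which is honest, but as a consequence the proposal currently establishes neither (i) nor the converse of (ii).
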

\begin{proof}
	Property (i) is \cite[Proposition 2.5]{caprace14}, (ii) is \cite[Proposition 2.8]{caprace14}.
\end{proof}

\subsection{Universal groups}

In order to keep track of the local actions, we introduce the notion of a \emph{coloured building}. In what follows, let $\Delta$ be a semiregular right-angled building with parameters $(q_i)_{i\in I}$.

\begin{definition}[colourings]\label{def:colourings}
	For every $i\in I$, let $X_i$ be a set of \emph{$i$-colours} (or \emph{$i$-labels}) such that $|X_i|=q_i$. A \emph{legal colouring} of $\Delta$ is a map
\[\lambda \colon \Delta \to \prod_{i\in I} X_i \colon c \mapsto \big(\lambda_1(c),\dots,\lambda_n(c)\big)\]
satisfying the following properties for every $i\in I$ and for every $i$-panel $\P$:
\begin{enumerate}
	\item the restriction $\restrict{\lambda_i}{\P} \colon \P\to X_i$ is a bijection;
	\item for every $j\neq i$, the restriction $\restrict{\lambda_j}{\P} \colon \P\to X_j$ is constant.
\end{enumerate}
\end{definition}

The following result is basic but important.
\begin{proposition}
	\label{prop:recolouring}
	Let $\lambda$ and $\lambda'$ be two different legal colourings of $\Delta$. Let $c$ and $c'$ be two chambers such that $\lambda(c)=\lambda'(c')$. Then there exists an automorphism $g\in\Aut(\Delta)$ such that $g\acts c=c'$ and $\lambda'=\lambda\circ g$.
\end{proposition}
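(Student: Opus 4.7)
The plan is to build $g$ inductively on the gallery distance from $c$. First I would set $g(c) := c'$, which is compatible with the hypothesis $\lambda(c)=\lambda'(c')$. Assuming $g$ has been defined on all chambers within distance $n$ of $c$ so that the two colourings are intertwined as demanded, I would extend to an arbitrary chamber $e$ at distance $n+1$ as follows: pick a neighbour $d \sim_i e$ with $d$ at distance $n$ from $c$, and let $g(e)$ be the unique chamber in the $i$-panel of $g(d)$ whose $\lambda'_i$-value coincides with $\lambda_i(e)$. Existence and uniqueness come from clause (i) of \cref{def:colourings}, while the values at the remaining coordinates match automatically by clause (ii) applied at the $i$-panels containing $\{e,d\}$ and $\{g(e),g(d)\}$, together with the inductive colour-matching at $d$.

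The hard part is well-definedness: what if $e$ has several neighbours inside the ball of radius $n$ around $c$? Since the projection of $c$ onto any panel is a single chamber, two such neighbours must lie in panels of distinct types, say $d_1 \sim_i e$ and $d_2 \sim_j e$ with $i \neq j$. Then \cref{lem:closingsquares}(i) forces $m_{ij} = 2$ and supplies a chamber $f$ at distance $n-1$ closing the square $d_1 \sim_j f \sim_i d_2$. Because $m_{ij}=2$, the $\{i,j\}$-residue containing $g(f)$ is a rank-two grid, and in it there is a unique chamber $h$ diagonally opposite $g(f)$ satisfying $h\sim_i g(d_1)$ and $h\sim_j g(d_2)$. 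A short colour computation, using that each $\lambda'_k$ is constant on panels of type different from $k$, shows that both candidate definitions of $g(e)$ yield this same $h$; the bijectivity of $\lambda'_i$ on the $i$-panel through $g(d_1)$ then pins the value down unambiguously.

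After well-definedness, the remaining checks are routine. Preservation of the adjacency used in the construction is immediate; for an edge $e \sim_j e'$ joining two chambers both at distance $n+1$ from $c$, I would invoke \cref{lem:closingsquares}(ii) to reduce to another commuting square whose image under $g$ automatically closes up, so $g(e)\sim_j g(e')$. Bijectivity of $g$ follows by running the symmetric construction with the roles of $(\lambda,c)$ and $(\lambda',c')$ exchanged to produce an inverse, and the identity $\lambda'=\lambda\circ g$ holds at every chamber because it is built into each step of the induction. The decisive step is therefore the well-definedness argument, which is precisely where the right-angled hypothesis enters through the closing-squares lemma.
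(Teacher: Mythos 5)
The inductive extension over gallery distance and the closing-squares argument for well-definedness are the right strategy, and your diagonal-chamber computation (showing both candidate definitions of $g(e)$ hit the unique $h$ opposite $g(f)$ in the $\{i,j\}$-grid) is accurate. But there is a direction error in the intertwining that your concluding paragraph does not resolve. Your rule sets $g(e)$ so that $\lambda'_i(g(e)) = \lambda_i(e)$; chasing this through the induction gives $\lambda'\circ g = \lambda$, i.e.\ $\lambda = \lambda'\circ g$, whereas you assert at the end that $\lambda'=\lambda\circ g$ ``is built into each step.'' It is not --- those two identities are genuinely different, and only the former is what your construction produces.

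The mismatch already shows in the base case. If $g\acts c = c'$ and $\lambda' = \lambda\circ g$, then evaluating at $c$ forces $\lambda'(c) = \lambda(c')$, but the hypothesis you are given is $\lambda(c) = \lambda'(c')$, and these are not equivalent conditions. For instance, on a single $i$-panel with three chambers $a,b,e$ and colourings $\lambda,\lambda'$ assigning the $i$-colour sequences $(1,2,3)$ and $(2,3,1)$ respectively, one has $\lambda(a)=\lambda'(e)$ but $\lambda'(a)\neq\lambda(e)$, so no automorphism can send $a$ to $e$ and satisfy $\lambda'=\lambda\circ g$. What your construction actually proves is the statement with conclusion $\lambda = \lambda'\circ g$, which is the one compatible with the given hypothesis $\lambda(c)=\lambda'(c')$; to obtain the printed conclusion $\lambda'=\lambda\circ g$ you would instead need to start from $\lambda'(c)=\lambda(c')$ and flip the inductive rule to $\lambda_i(g(e))=\lambda'_i(e)$. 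You should make the hypothesis, the inductive rule, and the final identity all point the same way --- as written they do not.
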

\begin{proof}
	We refer to \cite[Proposition 2.44]{demedts2018}.
\end{proof}

Using colourings we now come to the definitions of local actions and universal groups.
\begin{definition}[local action]
	\label{def:localaction}
	Let $\lambda$ be a legal colouring of $\Delta$. Consider an $i$-panel~$\P$ and a building automorphism $g\in\Aut(\Delta)$. The \emph{local action} of $g$ at $\P$ is the map
	\[\sigma_\lambda(g,\P) = \restrict{\lambda_i}{g\acts\P} \circ \restrict{g}{\P} \circ \restrict[-1]{\lambda_i}{\P}\]
	which is an element of $\Sym(X_i)$ by definition of $\lambda$.
\end{definition}

\begin{definition}[universal group]\label{def:universal}
	Let $\F$ be a map that associates to every $i\in I$ some permutation group $F_i\leq\Sym(X_i)$. Let $\Delta$ be a semiregular right-angled building with parameters $q_i=|X_i|$. Fix a legal colouring $\lambda$ of $\Delta$. We define the \emph{universal group} of $\Delta$ 
	w.r.t.~the data $\F$ as
	\[\U_\Delta^\lambda(\F) = \left\{g\in\Aut(\Delta) \mid \text{$\sigma_\lambda(g,\P) \in F_i$ for every $i\in I$ and $\P\in\Res_i(\Delta)$}\right\},\]
	i.e., as the subgroup of all automorphisms that locally act like permutations in~$F_i$. We call the $F_i$ the \emph{local groups}. Note that we do not require the local groups to be transitive, nor to have finite degree.
\end{definition}

It is not hard to see that the local actions satisfy
\[\sigma_\lambda(gh,\P) = \sigma_\lambda(g,h\acts\P)\cdot\sigma_\lambda(h,\P)
	\qquad\text{and}\qquad
	\sigma_\lambda(g^{-1},\P) = \sigma_\lambda(g,g^{-1}\acts\P)^{-1}\]
so it readily follows that every universal group is indeed a subgroup of $\Aut(\Delta)$. Moreover, the choice of the legal colouring is not essential for the structure of the universal group, in the following sense.
\begin{proposition}
	For different choices of legal colourings $\lambda$ and $\lambda'$, the corresponding universal groups $\U_\Delta^\lambda(\F)$ and $\U_\Delta^{\lambda'}(\F)$ are conjugate in $\Aut(\Delta)$.
\end{proposition}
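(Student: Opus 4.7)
My plan is to produce an explicit conjugating element by invoking \cref{prop:recolouring}. First I would observe that any legal colouring $\mu$ of $\Delta$ is surjective onto $\prod_{i\in I} X_i$: starting from a chamber $c$ with $\mu(c)=(x_i)_{i\in I}$ and any target tuple $(y_i)_{i\in I}$, one can successively move within the $i$-panel through the current chamber to the unique chamber there of $i$-colour $y_i$; by property (ii) of \cref{def:colourings}, the coordinates $j\neq i$ are unaffected. Iterating over the finite set $I$ yields a chamber with the prescribed colour. In particular, for any fixed chamber $c$ of $\Delta$ there is a chamber $c'$ with $\lambda'(c')=\lambda(c)$, and \cref{prop:recolouring} then supplies some $g\in\Aut(\Delta)$ with $\lambda' = \lambda\circ g$.

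Next I would verify that this $g$ conjugates $\U_\Delta^{\lambda'}(\F)$ onto $\U_\Delta^\lambda(\F)$ by comparing local actions. The key identity is
\[\sigma_\lambda(ghg^{-1},\P) \;=\; \sigma_{\lambda'}\bigl(h,\,g^{-1}\acts\P\bigr)\]
for every $h\in\Aut(\Delta)$, every $i\in I$ and every $i$-panel $\P$. This is a short computation from \cref{def:localaction}: the relation $\lambda'_i = \lambda_i\circ g$ on the $i$-panel $g^{-1}\acts\P$ gives $\restrict[-1]{\lambda'_i}{g^{-1}\acts\P} = g^{-1}\circ\restrict[-1]{\lambda_i}{\P}$, and substituting this together with the dual relation on the image panel into the defining composition of $\sigma_\lambda(ghg^{-1},\P)$ causes the two outer copies of $g$ to cancel, leaving precisely the composition defining $\sigma_{\lambda'}(h,g^{-1}\acts\P)$.

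Since $g$ is type-preserving, the panel $g^{-1}\acts\P$ ranges over all $i$-panels as $\P$ does. Hence the identity above shows that $ghg^{-1}$ has all of its local actions in $F_i$ if and only if $h$ does, so $g\,\U_\Delta^{\lambda'}(\F)\,g^{-1}=\U_\Delta^\lambda(\F)$, which is the desired conjugacy. The only delicate part of the proof is the bookkeeping in the local action identity, where one must carefully distinguish which of the colourings $\lambda$ and $\lambda'$ is being evaluated on which panel; once the domains are correctly identified, the verification is a one-line unwinding of \cref{def:localaction}.
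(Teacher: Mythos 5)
Your proof is correct and follows the natural route: the recolouring automorphism $g$ from \cref{prop:recolouring} with $\lambda' = \lambda\circ g$ does conjugate the two universal groups, and the identity $\sigma_\lambda(ghg^{-1},\P) = \sigma_{\lambda'}(h,g^{-1}\acts\P)$ is exactly the bookkeeping needed; the paper itself just cites the earlier reference \cite[Proposition 3.7 (i)]{demedts2018}, which establishes the statement by the same conjugation argument.
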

\begin{proof}
	We refer to \cite[Proposition 3.7 (i)]{demedts2018}.
\end{proof}
When irrelevant or clear from the context we usually suppress the dependency on $\Delta$ and the colouring $\lambda$ in the notation, and e.g.~abbreviate $\U_\Delta^\lambda(\F)$ as $\U(\F)$ and $\sigma_\lambda$ as~$\sigma$. Moreover, we will frequently omit the local data $\F$ in our statements and further abbreviate $\U(\F)$ as $\U$ when there is no ambiguity possible.

\begin{definition}
	For each panel $\mathcal P$, the \emph{panel group} $\restrict{\U}{\P}$ is the subgroup of $\Sym(\P)$ induced by the action of the panel stabiliser $\U_{\{\P\}}$ on the chambers of $\P$.
\end{definition}

We conclude with a definition that will turn out to be convenient throughout this article; see, in particular, \cref{lem:orbits} below.
\begin{definition}[harmony]
	Let $J$ and $K$ be two disjoint subsets of $I$. We will call $K$-residues $\R$ and $\R'$ \emph{$J$-harmonious} if for each $j\in J$, the two (well-defined) colours $\lambda_j(\R)$ and $\lambda_j(\R')$ lie in the same orbit of $F_j$.
	
	By slight abuse of notation, for $J=\{j\}$, we abbreviate $\{j\}$-harmony as $j$-harmony. Moreover, for $J=I\setminus K$, we abbreviate $(I\setminus K)$-harmony simply as harmony.

	In particular, for $K=\emptyset$, two chambers $c$ and $c'$ are harmonious if for each $i\in I$, the colours $\lambda_i(c)$ and $\lambda_i(c')$ lie in the same orbit of $F_i$.
\end{definition}


\section{Properties of the universal groups}\label{se:props}

As a first result, we can quite easily calculate the orbits of $\U$ on chambers and residues.
\begin{proposition}
\label{lem:orbits}
	Two residues lie in the same orbit of $\U$ if and only if they are harmonious.
	In particular, two chambers $c$ and $c'$ lie in the same orbit of $\U$ if and only if their colours $\lambda_i(c)$ and $\lambda_i(c')$ lie in the same orbit of $F_i$ for each $i\in I$.
\end{proposition}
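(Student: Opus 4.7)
The plan is to prove ``only if'' directly from the local action axiom, and ``if'' by reducing the residue case to the chamber case and invoking \cref{prop:recolouring}.

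For the ``only if'' direction, I first observe that $\lambda_j$ is constant on any $K$-residue $\R$ whenever $j\in I\setminus K$: galleries inside $\R$ use only panels of types $i\in K$, and condition (ii) in \cref{def:colourings} guarantees that $\lambda_j$ is constant on each such panel. Hence $\lambda_j(\R)$ is well-defined. If $g\in\U$ satisfies $g\acts\R=\R'$, I pick any chamber $c\in\R$ and the $j$-panel $\P$ through $c$; the local action $\sigma_\lambda(g,\P)\in F_j$ sends $\lambda_j(\R)=\lambda_j(c)$ to $\lambda_j(g\acts c)=\lambda_j(\R')$, which establishes harmony.

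For the ``if'' direction in the chamber case $K=\emptyset$, given chambers $c,c'$ whose $i$-colours lie in a common $F_i$-orbit for every $i$, I pick $f_i\in F_i$ satisfying $f_i(\lambda_i(c'))=\lambda_i(c)$ and set $\lambda'_i:=f_i\circ\lambda_i$. Componentwise postcomposition by a permutation of $X_i$ preserves the bijectivity and constancy conditions on panels, so $\lambda'$ is again a legal colouring, and by construction $\lambda'(c')=\lambda(c)$. Applying \cref{prop:recolouring} yields $g\in\Aut(\Delta)$ with $g\acts c=c'$ and $\lambda'=\lambda\circ g$. Unwinding \cref{def:localaction}, for any $i$-panel $\P$ and $d\in\P$ I get $\sigma_\lambda(g,\P)(\lambda_i(d))=\lambda_i(g\acts d)=\lambda'_i(d)=f_i(\lambda_i(d))$, so $\sigma_\lambda(g,\P)=f_i\in F_i$ and hence $g\in\U$.

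For the general residue case, I reduce to the chamber case by producing harmonious chambers $c\in\R$ and $c'\in\R'$. Fix any $c\in\R$; walking inside $\R'$ along an $i$-panel (for $i\in K$) allows me to prescribe $\lambda_i$ freely without disturbing any $\lambda_j$ with $j\ne i$ (again by condition (ii) of \cref{def:colourings}), so iterating over $i\in K$ I obtain $c'\in\R'$ with $\lambda_i(c')=\lambda_i(c)$ for every $i\in K$. For $j\notin K$, the colours $\lambda_j(c)=\lambda_j(\R)$ and $\lambda_j(c')=\lambda_j(\R')$ lie in the same $F_j$-orbit by harmony. The chamber case then produces $g\in\U$ with $g\acts c=c'$, and since a $K$-residue is determined by any of its chambers, $g\acts\R=\R'$. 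The main obstacle I foresee is the verification that $\lambda'$ is a legal colouring and that the resulting $g$ inherits local actions in $F_i$; both reduce to careful bookkeeping with \cref{def:localaction} and \cref{def:colourings} rather than any deep argument.
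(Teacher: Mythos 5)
Your proposal is correct and follows essentially the same strategy as the paper's proof: the ``only if'' direction unwinds the definition of the local action at a panel through a chamber of $\R$, and the ``if'' direction builds a recolouring $\lambda'$ by post-composing each $\lambda_i$ with a suitable $f_i\in F_i$, invokes \cref{prop:recolouring} to obtain a building automorphism $g$ with $g\acts c = c'$ and $\lambda' = \lambda\circ g$, and then checks that all local actions of $g$ are the permutations $f_i$. The only cosmetic difference is that you first settle the chamber case and then reduce the general residue case to it by choosing $c'\in\R'$ with the same $K$-colours as $c\in\R$; the paper performs that same chamber-choice inside the single argument rather than as a separate reduction step.
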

\begin{proof}
	First consider two harmonious $J$-residues $\R$ and $\R'$. For each $i\notin J$, let $f_i\in F_i$ be a permutation such that $f_i\acts\lambda_i(\R)=\lambda_i(\R')$. For each $i\in J$, let $f_i\in F_i$ be the identity on $X_i$. Define a ``recolouring map''
	\[\phi\colon\prod_{i\in I} X_i\to\prod_{i\in I} X_i\colon \big(x_1,\dots,x_n\big)\mapsto\big(f_1\acts x_1,\dots,f_n\acts x_n\big).\]
	It is clear from \cref{def:colourings} that $\phi\circ\lambda$ is again a legal colouring. Take any chamber $c\in\R$ and let $c'\in\R'$ be a chamber with the same $J$-colours. Then $\lambda(c)=(\phi\circ\lambda)(c')$ and \cref{prop:recolouring} provides a building automorphism $g\in\Aut(\Delta)$ such that $g\acts c=c'$ and $\phi\circ\lambda=\lambda\circ g$. In particular, $g(\R)=\R'$.
	
	We claim that $g\in \U = \U^\lambda(\F)$, so that $\R$ and $\R'$ will lie in the same orbit of $\U$. In order to show this, let $i\in I$ and $d\in\Delta$, and let $\P$ be the $i$-panel containing $d$. Then
	\[\sigma_\lambda(g,\P)
		=\restrict{\lambda_i}{g\acts\P} \circ \restrict{g}{\P} \circ \restrict[-1]{\lambda_i}{\P}
		=f_i \circ \restrict{\lambda_i}{\P} \circ \restrict[-1]{\lambda_i}{\P}
		=f_i\in F_i\]
	using the fact that $\phi\circ\lambda=\lambda\circ g$. Our claim is proved.
	
	\smallskip
	Conversely, suppose that $g\acts\R=\R'$ for some $g\in\U$ and let $i\notin J$. Let $c$ be an arbitrary chamber in $\R$ and let $\P$ be the $i$-panel containing $c$. Note that $\lambda_i(\R)=\lambda_i(c)$ by definition of legal colourings. If we now define
	\[f_i=\sigma_\lambda(g,\P)
		\in F_i ,\]
	then clearly
	\[f_i\acts\lambda_i(\R)
		=f_i\acts\lambda_i(c)
		=\left(\restrict{\lambda_i}{g\acts\P} \circ \restrict{g}{\P} \circ \restrict[-1]{\lambda_i}{\P} \circ \lambda_i\right)(c)
		=\lambda_i(g\acts c) 
		=\lambda_i(\R').\]
	Hence for each $i\notin J$, the colours $\lambda_i(\R)$ and $\lambda_i(\R')$ lie in the same orbit of $F_i$.
\end{proof}

An immediate corollary is the following.
\begin{corollary}
	The action of $\U(\F)$ on $\Delta$ has finite orbits if and only if all local actions have finite orbits. More precisely,
	\[\left\lvert \Delta/\U\right\rvert = \prod_{i\in I} \left\lvert X_i/F_i\right\rvert.\]
	In particular the group $\U(\F)$ is transitive if and only if all local groups $F_i$ are transitive.
\end{corollary}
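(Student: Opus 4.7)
The plan is to derive this corollary as a direct counting consequence of \cref{lem:orbits}. That proposition shows that two chambers $c,c'$ lie in the same $\U$-orbit if and only if $\lambda_i(c)$ and $\lambda_i(c')$ lie in the same $F_i$-orbit for every $i\in I$; equivalently, the map
$$\Phi\colon \Delta/\U \to \prod_{i\in I} X_i/F_i,\qquad \U\text{-orbit of }c \ \mapsto\ \bigl(F_i\text{-orbit of }\lambda_i(c)\bigr)_{i\in I}$$
is well-defined and injective. Once $\Phi$ is shown to be bijective, all three assertions of the corollary drop out at once.

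To establish surjectivity of $\Phi$, it suffices to show that the legal colouring $\lambda \colon \Delta \to \prod_{i\in I} X_i$ is itself surjective. Fix a starting chamber $c_0$ with $\lambda(c_0)=(y_1,\dots,y_n)$ and a target tuple $(x_1,\dots,x_n)$, and proceed coordinate by coordinate: by \cref{def:colourings}(i), the $1$-panel through $c_0$ contains a chamber $c_1$ with $\lambda_1(c_1)=x_1$, while by \cref{def:colourings}(ii) the remaining coordinates are unchanged, so $\lambda(c_1)=(x_1,y_2,\dots,y_n)$. Iterating over $i=2,\dots,n$ (which is valid because $I$ is finite) yields a chamber whose colour tuple is exactly $(x_1,\dots,x_n)$, and composing with the quotient map then shows that $\Phi$ is surjective.

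Combining the two directions gives the cardinal equality $|\Delta/\U|=\prod_{i\in I}|X_i/F_i|$. The number of $\U$-orbits on $\Delta$ is then finite if and only if every factor $|X_i/F_i|$ is finite, i.e.~if and only if each local action has finitely many orbits; similarly, $\U$ is transitive on $\Delta$ exactly when $|\Delta/\U|=1$, which happens if and only if $|X_i/F_i|=1$ for every $i\in I$, i.e.~each $F_i$ is transitive on $X_i$. The only step that is not mere bookkeeping is the surjectivity of $\lambda$, but this follows straightforwardly from the legal colouring axioms together with the finiteness of $I$, so no genuine obstacle arises.
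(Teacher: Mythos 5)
Your proof is correct and takes essentially the same route the paper intends: the corollary is stated immediately after \cref{lem:orbits} with no separate proof, and your argument simply spells out why it is immediate — injectivity of the orbit map from \cref{lem:orbits}, surjectivity of $\lambda$ from the legal-colouring axioms (an observation the paper itself makes a few lines later in the proof of \cref{cor:cobounded}). No gaps.
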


It is worth explicitly mentioning the following observation.
\begin{lemma}
	For every $i\in I$, let $F_i\leq F_i'\leq\Sym(X_i)$. Let $\F$ and $\F'$ be the corresponding local data. Then there is a natural inclusion of $\U_\Delta^\lambda(\F)$ into $\U_\Delta^\lambda(\F')$.
\end{lemma}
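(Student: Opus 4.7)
The statement follows almost immediately from unwinding the definitions, so my plan is simply to check the containment and verify that the embedding is the canonical one (namely, the identity map on $\Aut(\Delta)$).

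First I would recall that, for the fixed legal colouring $\lambda$, both groups $\U_\Delta^\lambda(\F)$ and $\U_\Delta^\lambda(\F')$ are defined as subgroups of $\Aut(\Delta)$ via \cref{def:universal}. An element $g \in \Aut(\Delta)$ lies in $\U_\Delta^\lambda(\F)$ precisely when $\sigma_\lambda(g, \P) \in F_i$ for every $i \in I$ and every $i$-panel $\P$. Since by hypothesis $F_i \leq F_i'$ for all $i$, the condition $\sigma_\lambda(g, \P) \in F_i$ implies $\sigma_\lambda(g, \P) \in F_i'$. Hence every $g \in \U_\Delta^\lambda(\F)$ automatically satisfies the defining condition of $\U_\Delta^\lambda(\F')$, yielding the set-theoretic inclusion $\U_\Delta^\lambda(\F) \subseteq \U_\Delta^\lambda(\F')$.

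Next I would observe that this inclusion is a group homomorphism, because both groups inherit their operation from $\Aut(\Delta)$; the map is literally the identity on elements, so it preserves products and inverses trivially. This is the ``natural'' embedding referred to in the statement: no choice is involved beyond the fixed colouring $\lambda$ shared by both constructions.

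There is no real obstacle here, and no technical step to worry about; the only thing worth remarking on is that using the \emph{same} colouring $\lambda$ on both sides is what makes the embedding canonical. Had we chosen different colourings for $\F$ and $\F'$, the two universal groups would still be related by composition with a conjugation in $\Aut(\Delta)$, but the inclusion would no longer be the identity map. This is essentially the content of the earlier proposition on recolouring, and it is why the lemma is formulated with a fixed $\lambda$.
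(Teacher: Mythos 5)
Your proof is correct and takes essentially the same approach as the paper, which simply notes that the claim follows immediately from the definition of the universal group. Your additional remark about the role of the fixed colouring $\lambda$ is a nice observation, consistent with the paper's earlier recolouring proposition.
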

\begin{proof}
	This follows immediately from \cref{def:universal}.
\end{proof}
\begin{corollary}
	\label{cor:cobounded}
	The action of $\U(\F)$ on $\Delta$ is cobounded. More precisely, every ball in $\Delta$ of radius~$I$ contains a representative chamber of every orbit of $\U(\F)$.
\end{corollary}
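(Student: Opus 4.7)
The proof rests entirely on \cref{lem:orbits}: it suffices, for any fixed chamber $d_0$ and any chamber $c\in\Delta$, to exhibit a chamber $c'$ within distance $|I|$ of $d_0$ that is harmonious with $c$, i.e.\ such that $\lambda_i(c')$ lies in the $F_i$-orbit of $\lambda_i(c)$ for every $i\in I$.

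The plan is to build $c'$ one colour coordinate at a time, exploiting the defining properties of a legal colouring in \cref{def:colourings}: along any $i$-panel $\P$, the restriction $\restrict{\lambda_i}{\P}$ is a bijection onto $X_i$ while $\restrict{\lambda_j}{\P}$ is constant for every $j\neq i$. Hence a single step inside an $i$-panel can change the $i$-colour to any prescribed element of $X_i$ without perturbing any of the other colours.

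Enumerate $I=\{i_1,\dots,i_n\}$ and set $d_0$ to be the centre of the ball. Having constructed $d_{k-1}$, let $\P_k$ be the $i_k$-panel through $d_{k-1}$ and choose $d_k\in\P_k$ with $\lambda_{i_k}(d_k)=\lambda_{i_k}(c)$, which is possible since $\restrict{\lambda_{i_k}}{\P_k}$ is a bijection onto $X_{i_k}$. By the constancy property, $\lambda_{i_j}(d_k)=\lambda_{i_j}(d_{k-1})$ for every $j\neq k$, so an easy induction gives $\lambda_{i_j}(d_n)=\lambda_{i_j}(c)$ for all $j=1,\dots,n$. Moreover $\dist(d_0,d_n)\leq n=|I|$, so $d_n$ lies in the ball of radius $|I|$ around $d_0$. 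Applying \cref{lem:orbits} to $d_n$ and $c$ puts them in the same $\U(\F)$-orbit, which finishes the proof.

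There is no real obstacle here; the only point to be careful about is that the indices $i_1,\dots,i_n$ must be distinct, so that each step adjusts a previously untouched coordinate and leaves the coordinates already set intact. Note that the bound is sharp only in the qualitative sense: one can in fact omit all $i_k$ for which $\lambda_{i_k}(c)=\lambda_{i_k}(d_0)$ already holds, but the uniform bound $|I|$ is all that is asserted.
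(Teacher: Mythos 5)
Your proof is correct and follows essentially the same route as the paper: both arguments come down to observing that, starting from any chamber, one can reach a chamber with any prescribed colour tuple by stepping through at most $|I|$ panels, and then invoking the orbit characterisation from \cref{lem:orbits}. The only cosmetic difference is that the paper first proves coboundedness for $\U(\mathbf{Id})$ and then transfers it to $\U(\F)$ via the inclusion $\U(\mathbf{Id})\leq\U(\F)$, while you apply \cref{lem:orbits} to $\U(\F)$ directly; your version is slightly more explicit about the gallery construction.
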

\begin{proof}
	Let $\mathbf{Id}$ map every $i\in I$ to the trivial subgroup of $\Sym(X_i)$. By \cref{lem:orbits}, two chambers $c, c'$ lie in the same orbit of $\U(\mathbf{Id})$ if and only if $\lambda_i(c) = \lambda_i(c')$ for all $i \in I$.
	We claim that the action of $\U(\mathbf{Id})$ is cobounded. Indeed, let $c$ be any chamber, let $x_i\in X_i$ be arbitrary colours, and let $J=\{i\in I \mid \lambda_i(c_0)\neq x_i\}$. Then there exists a gallery of length $|J|$ that joins $c$ to a chamber $d$ with $\lambda_i(d)=x_i$ for all $i\in I$ (in fact, there exists a gallery of every type obtained by multiplying the elements of $J$ in some order). Since $d$ lies in the ball of radius $I$ around $c$, this proves our claim.
	
	Since $\U(\mathbf{Id}) \leq \U(\F)$, this implies that the action of $\U(\F)$ is cobounded as well.
\end{proof}

We also have precise information about the panel groups.
\begin{lemma}
	\label{localpermiso}
	Let $i\in I$ and let $\P$ be an $i$-panel. Then the panel group $\restrict{\U}{\P} \leq \Sym(\P)$ is permutationally isomorphic to $F_i$.
\end{lemma}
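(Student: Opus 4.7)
My plan is to exhibit a surjective group homomorphism $\rho \colon \U_{\{\P\}} \to F_i$ whose kernel is the pointwise stabiliser $\U_{(\P)}$, and then observe that this homomorphism, read through the bijection $\restrict{\lambda_i}{\P} \colon \P \to X_i$, is precisely the permutation isomorphism we want.

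First, define $\rho(g) = \sigma_\lambda(g, \P)$ for $g \in \U_{\{\P\}}$. Since $h \in \U_{\{\P\}}$ satisfies $h \acts \P = \P$, the cocycle identity $\sigma_\lambda(gh,\P) = \sigma_\lambda(g, h \acts \P) \cdot \sigma_\lambda(h,\P)$ collapses to the product rule, so $\rho$ is a group homomorphism, and its image lies in $F_i$ by definition of $\U$. For $g \in \U_{\{\P\}}$, we have $\restrict{\lambda_i}{g\acts\P} = \restrict{\lambda_i}{\P}$, hence $\sigma_\lambda(g,\P) = \restrict{\lambda_i}{\P} \circ \restrict{g}{\P} \circ \restrict[-1]{\lambda_i}{\P}$. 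This says exactly that the action of $g$ on $\P$, transported via the bijection $\restrict{\lambda_i}{\P}$, equals $\rho(g)$ acting on $X_i$. Consequently $\rho$ factors through the permutation group $\restrict{\U}{\P}$ and induces a permutational isomorphism from $\restrict{\U}{\P}$ onto the image of $\rho$. It therefore remains only to prove that $\rho$ is surjective onto $F_i$.

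The key step is this surjectivity, and I would prove it by imitating the recolouring construction from the proof of \cref{lem:orbits}. Fix $f \in F_i$ and set $f_i = f$, together with $f_j = \mathrm{id}_{X_j}$ for $j \neq i$. Let
\[
	\phi \colon \prod_{k\in I} X_k \to \prod_{k\in I} X_k \colon (x_1,\dots,x_n) \mapsto (f_1 \acts x_1, \dots, f_n \acts x_n),
\]
so that $\phi\circ\lambda$ is again a legal colouring by \cref{def:colourings}. Pick any chamber $c \in \P$ and let $c' \in \P$ be the unique chamber with $\lambda_i(c') = f^{-1}(\lambda_i(c))$; the other $j$-colours agree automatically since $\lambda_j$ is constant on $\P$ for $j \neq i$. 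Then $\lambda(c) = (\phi\circ\lambda)(c')$, so \cref{prop:recolouring} yields $g \in \Aut(\Delta)$ with $g \acts c' = c$ and $\phi\circ\lambda = \lambda\circ g^{-1}$, equivalently $\lambda \circ g = \phi^{-1} \circ \lambda$. For any $k$-panel $\mathcal{Q}$, the same computation as in \cref{lem:orbits} yields $\sigma_\lambda(g,\mathcal{Q}) = f_k^{-1} \in F_k$, so $g \in \U$; and since $g \acts c' = c$ with $c, c' \in \P$, we have $g \in \U_{\{\P\}}$. Applying the same construction to $f^{-1}$ instead (or simply passing to $g^{-1}$) produces an element of $\U_{\{\P\}}$ whose local action at $\P$ is $f$.

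The main (and really only) obstacle is the surjectivity step, for which we must produce global automorphisms realising any prescribed local permutation at a single panel; this is where \cref{prop:recolouring} does the heavy lifting by turning the combinatorial rigidity of legal colourings into global extendability. The kernel of $\rho$ is by inspection $\U_{(\P)}$, so $\restrict{\U}{\P} \cong \U_{\{\P\}}/\U_{(\P)} \cong F_i$ as permutation groups via $\restrict{\lambda_i}{\P}$, which is the statement of the lemma.
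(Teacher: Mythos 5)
Your proof is correct. Note, however, that the paper does not present its own argument for this lemma: it simply cites~\cite[Lemma~3.5]{demedts2018}, so there is no in-text proof to compare against. Your argument is a self-contained reconstruction, and it is sound. You correctly observe that $\rho = \sigma_{\lambda,\P}$ restricted to $\U_{\{\P\}}$ is a homomorphism (the cocycle identity collapses because $\P$ is stabilised), that its kernel is $\U_{(\P)}$, and that the induced map on $\restrict{\U}{\P}$ is precisely conjugation by the bijection $\restrict{\lambda_i}{\P}$, so everything hinges on surjectivity. The surjectivity step via \cref{prop:recolouring} with the recolouring map $\phi$ is exactly the technique the paper itself uses in the proof of \cref{lem:orbits}, and you track the inverses cleanly (the element produced has local action $f^{-1}$ everywhere on $i$-panels and the identity elsewhere, so passing to $g^{-1}$ gives what you want). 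An alternative, heavier route would be to invoke \cref{lem:extendlocaluniversal}, which directly produces an element of $\U_{\{\P\}}$ with prescribed local action $f_0$ at $\P$; that proposition does not depend on this lemma, so there is no circularity, but your route via \cref{prop:recolouring} is more elementary and closer in spirit to the paper's methodology.
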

\begin{proof}
	We refer to \cite[Lemma 3.5]{demedts2018}.
\end{proof}

In the terminology of \cite{caprace14}, the following lemma states that finite sets of chambers are contained in wings, and moreover, we have control over the colour of the basepoint.
\begin{lemma}
\label{lem:finiteinwing}
	Let $\Phi$ be a finite set of chambers, let $i \in I$ and let $x\in X_i$ be an $i$-colour. Assume that the diagram of $\Delta$ has no isolated nodes. Then there exists an $i$-panel $\P$ such that $\proj_\P(\Phi)$ is a single chamber $c$ in $\P$ with \mbox{$\lambda_i(c)=x$}.
\end{lemma}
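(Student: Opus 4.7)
The plan is to produce $\P$ by extending a chamber of $\Phi$ via a long ``zigzag'' gallery and taking $\P$ at its far end. Since the diagram has no isolated nodes, I would fix $j\in I$ with $m_{ij}=\infty$, pick any $c_0\in\Phi$, set $D:=\max_{c\in\Phi}\dist(c_0,c)$, and choose an integer $k>D$. The first step is to construct a gallery
\[c_0=d_0\sim_j d_1\sim_i d_2\sim_j d_3\sim_i\cdots\sim_j d_{2k-1}\sim_i d_{2k}\]
using the freedom at each $i$-step to ensure $\lambda_i(d_{2k-1})=x$. Since $d_{2k-1}\sim_j d_{2k-2}$ one has $\lambda_i(d_{2k-1})=\lambda_i(d_{2k-2})$, so it suffices to pick $d_{2k-2}$ to be the chamber of colour $x$ in the $i$-panel of $d_{2k-3}$; this requires $\lambda_i(d_{2k-3})\neq x$, which can be secured by a preparatory choice two steps earlier (and by enlarging $k$ by a bounded amount if necessary). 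The panel $\P$ is then the $i$-panel through $d_{2k-1}$ and $d_{2k}$.

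The main claim is that $\proj_\P(c)=d_{2k-1}$ for every chamber $c$ with $\dist(c_0,c)\leq D$, and in particular for every $c\in\Phi$. I would prove this by induction on $\dist(c_0,c)$, using the gate characterisation: $\proj_\P(c)=d_{2k-1}$ if and only if no reduced expression of the Weyl distance $\delta(c,d_{2k-1})$ ends in $s_i$. The base case $c=c_0$ is immediate, as $\delta(c_0,d_{2k-1})=(s_js_i)^{k-1}s_j$ is reduced because $m_{ij}=\infty$, and since $s_i$ and $s_j$ do not commute, no commutation move in the right-angled Coxeter group of $\Delta$ can bring an $s_i$ past the trailing $s_j$ to the last position.

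For the inductive step, pick $c'$ with $\dist(c_0,c')=\dist(c_0,c)-1$ and $c\sim_m c'$, and set $w:=\delta(c',d_{2k-1})$ and $v:=s_m w=\delta(c,d_{2k-1})$. Suppose for contradiction that some reduced expression $R$ of $v$ ends in $s_i$. If $\ell(v)=\ell(w)-1$, then $w$ admits a reduced expression $s_m u$ with $u=v$, so prepending $s_m$ to $R$ gives a reduced expression of $w$ ending in $s_i$, contradicting the induction hypothesis. If $\ell(v)=\ell(w)+1$, then the canonical reduced expression of $v$ is $s_m\cdot w_0$ for $w_0$ any reduced expression of $w$, and $R$ is obtained from it by commutation moves; tracking the ``descendant'' of the initial $s_m$ in $R$ and deleting it yields a reduced expression of $w$ (since commutations involving $s_m$ do not affect the relative order of the remaining letters), which still ends in $s_i$ unless the descendant itself occupies the last position of $R$. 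This residual case forces $s_m=s_i$ and every letter of $w_0$ to commute with $s_i$, equivalently $w\in W_{i^\perp}$.

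To rule out this last possibility, I would invoke the choice $k>D$: in a right-angled Coxeter group the number $\#s_j(g)$ of $s_j$-letters in any reduced expression of $g\in W$ is a well-defined invariant of $g$, and a simple counting argument on $s_js_j$-cancellations shows $|\#s_j(g)-\#s_j(h)|\leq\#s_j(gh)$. A gallery from $c'$ to $d_{2k-1}$ passing through $c_0$ has type $u\cdot v_1$ with $u$ the type of a minimal gallery $c'\to c_0$ (of length $\leq D$) and $v_1:=(s_js_i)^{k-1}s_j$ (so $\#s_j(v_1)=k$); reducing this to a minimal gallery by pairwise cancellations of $s_j$'s gives $\#s_j(w)\geq k-\#s_j(u)\geq k-D\geq 1$, whence $w\notin W_{i^\perp}$. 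This closes off the residual case and completes the induction. The most delicate step is the Case~A descendant-tracking argument combined with the $\#s_j$-invariance used to rule out $w\in W_{i^\perp}$; the rest of the proof is fairly routine bookkeeping.
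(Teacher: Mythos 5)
Your proof is correct, but it follows a genuinely different route from the paper. The paper's argument is short and geometric: it projects $\Phi$ onto a single $\{i,j\}$-residue $\R$ (a tree, since $m_{ij}=\infty$), chooses an $i$-panel $\P_0$ of $\R$ lying outside a bounded set containing $\proj_\R(\Phi)$ so that $\proj_{\P_0}(\proj_\R(\Phi))$ is a single chamber, shifts by one $j$-step if needed to realise the required $i$-colour $x$, and then reads off $\proj_\P(\Phi)=\{c\}$ directly from the gate property of projections onto $\R$ and $\P$. You instead build an explicit long alternating $(j,i)$-gallery (in effect inside that same $\{i,j\}$-residue), place $\P$ at its far end, and prove $\proj_\P(c)=d_{2k-1}$ by an induction on $\dist(c_0,c)$ carried out entirely at the level of Coxeter word combinatorics: the deletion/exchange condition, descendant-tracking in the commutation (heap) class of a reduced word in a right-angled Coxeter group, and a $\#s_j$-letter count to exclude the residual case $w\in W_{i^\perp}$. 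In effect you re-derive by hand the gate/convexity property that the paper simply invokes, so the paper's proof is shorter and less delicate, while yours is more self-contained and makes the underlying word combinatorics explicit. Two of your steps deserve a sentence more justification: the fact that deleting the tracked letter from $R$ leaves a reduced expression of $w$ holds because the descendant of a minimal heap element remains minimal, hence commutes with everything to its left in $R$, and when it also occupies the last position it is simultaneously maximal, hence isolated (forcing $s_m=s_i$ and $w\in W_{i^\perp}$); and the bound $\#s_j(w)\geq k-\#s_j(u)$ is cleanest to obtain by observing that gallery reduction (homotopies and contractions) never increases the number of $j$-letters, so $\#s_j\circ\delta$ satisfies a triangle inequality on chambers --- the group-level inequality $|\#s_j(g)-\#s_j(h)|\leq\#s_j(gh)$ does not apply directly here, since in general $w\neq\delta(c',c_0)\cdot\delta(c_0,d_{2k-1})$ as elements of $W$. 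Neither point is a gap, merely the places where your argument is thinnest compared to the paper's.
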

\begin{proof}
	Let $j\in I$ be such that $m_{ij}=\infty$ and let $\R$ be an arbitrary residue of type $\{i,j\}$ (which is a tree). The projection of $\Phi$ onto $\R$ defines a finite set of chambers $\Phi'$ in~$\R$, which can be enclosed by a ball $B$ of finite diameter. Let $\P_0$ be any $i$-panel in $\R\setminus B$.
	
	Since $\R$ is a tree, the projection $\proj_{\P_0}(\Phi')$ is just a single chamber $c_0$. If $\lambda_i(c_0)=x$, then we let $c=c_0$. If $\lambda_i(c_0)\neq x$, then let $c$ be any chamber $j$-adjacent to the (unique) chamber in $\P_0$ with $i$-colour $x$. In both cases, $\lambda_i(c)=x$.
	
	Let $\P$ be the $i$-panel containing~$c$; we claim that $\proj_\P(\Phi) = \{c\}$. Let $d\in\Phi$ and let $\gamma_d$ be the type of a minimal gallery from $d$ to its projection onto $\R$. By definition of projections $\gamma_d$ cannot be homotopic to a word ending in $i$~or~$j$. Therefore the gallery of type $\gamma_d\:(ij)^k$ or $\gamma_d\: j(ij)^k$ from $d$ along $\proj_{\R}(d)$ to $c$ is minimal, and we conclude that $\proj_\P(d)=c$.
\end{proof}

Another key ingredient is \cref{lem:extendlocaluniversal} below, building further upon results by Pierre-Emmanuel Caprace \cite[Proposition 4.2]{caprace14} and Tom De Medts, Ana Silva and Koen Struyve \cite[Proposition 3.15, Theorem 4.7]{demedts2018}. The proposition allows to extend local permutations to universal group elements in a controlled way.

\begin{lemma}
\label{lem:parallellocals}
	Let $g$ be an automorphism of $\Delta$ and let $\P$ and $\P'$ be two parallel $k$-panels. Then the local actions $\sigma_\lambda(g,\P)$ and $\sigma_\lambda(g,\P')$ are identical permutations in $F_k$.
\end{lemma}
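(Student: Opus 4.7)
The plan is to exploit the product structure of residues of type $k \cup k^\perp$ together with the fact that the $k$-th coordinate of a legal colouring is constant along any gallery whose type avoids $k$.

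First I would invoke \cref{prop:parallelism}(ii) to obtain a residue $\mathcal R$ of type $k\cup k^\perp$ containing both $\P$ and $\P'$. Since every element of $k^\perp$ commutes with $k$, the residue $\mathcal R$ decomposes as a direct product of a single $k$-panel with a residue of type $k^\perp$; the $k$-panels in $\mathcal R$ are in bijection with the chambers of this $k^\perp$-factor. Consequently, for every $c\in\P$ there is a minimal gallery from $c$ to $c':=\proj_{\P'}(c)\in\P'$ whose type involves only elements of $k^\perp$. (Equivalently, one may argue directly by repeatedly invoking \cref{lem:closingsquares}: any $k$-step in a gallery between $\P$ and $\P'$ can be commuted past $k^\perp$-steps and cancelled against the projection, producing a minimal gallery of type in $k^\perp$.)

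Next I would use the defining property of a legal colouring: for every $i\neq k$ and every $i$-panel $\mathcal Q$, the map $\lambda_k$ is constant on $\mathcal Q$. Hence $\lambda_k$ is constant along any gallery whose type avoids $k$, and in particular $\lambda_k(c)=\lambda_k(c')$ for the gallery produced above. Applying the type-preserving automorphism $g$ to the same gallery yields a gallery of the same type from $g\acts c$ to $g\acts c'$, still avoiding $k$, so by the same argument $\lambda_k(g\acts c)=\lambda_k(g\acts c')$.

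Now I can simply unwind the definition of the local action. For every $c\in\P$, setting $c'=\proj_{\P'}(c)$,
\[
\sigma_\lambda(g,\P)\bigl(\lambda_k(c)\bigr)=\lambda_k(g\acts c)=\lambda_k(g\acts c')=\sigma_\lambda(g,\P')\bigl(\lambda_k(c')\bigr)=\sigma_\lambda(g,\P')\bigl(\lambda_k(c)\bigr).
\]
Since $\restrict{\lambda_k}{\P}\colon\P\to X_k$ is a bijection, this equality holds for every element of $X_k$, and therefore $\sigma_\lambda(g,\P)=\sigma_\lambda(g,\P')$ in $\Sym(X_k)$ (and this common permutation lies in $F_k$ as soon as one of the two does, in particular whenever $g\in\U$). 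The only step requiring genuine care is the existence of a minimal gallery of type in $k^\perp$ between $c$ and its projection — everything else is a direct bookkeeping calculation with the definitions.
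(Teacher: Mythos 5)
Your proof is correct and follows essentially the same approach as the paper: invoke the parallelism criterion to place $\P$ and $\P'$ in a common residue of type $k\cup k^\perp$, use the product structure to get a minimal gallery from $c$ to $\proj_{\P'}(c)$ of type in $k^\perp$, observe that $\lambda_k$ is constant along such galleries, and then unwind the definition of the local action. The one cosmetic difference is that where the paper writes $g\acts\proj_{\P'}(c)=\proj_{g\acts\P'}(g\acts c)$ and compares $k$-colours via the image panels, you apply $g$ directly to the connecting gallery and note that its type is preserved — an equivalent observation phrased slightly more elementarily. Your closing remark that the common permutation lies in $F_k$ only when $g\in\U$ is a fair caveat about the phrasing of the statement.
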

\begin{proof}
	By \cref{prop:parallelism} (ii), $\P$ and $\P'$ are contained in a residue of type $k\cup k^\perp$. This residue has a direct product structure $\P_0\times\R_0$, where $\P_0$ is a $k$-panel and $\R_0$ a residue of type $k^\perp$. Consider any chamber $c\in\P$. By this direct product structure, the unique minimal gallery from $c$ to $\proj_{\P'}(c)$ is contained in a residue of type $k^\perp$. In particular, it follows that $\lambda_k(c) = \lambda_k(\proj_{\P'}(c))$ for all $c\in\P$. Moreover, as $g$ is an automorphism, $g\acts\proj_{\P'}(c) = \proj_{g\acts\P'}(g\acts c)$ for all $c\in\P$.
	
	Now consider the $k$-colour $\lambda_k(g\acts c)$. On the one hand, by definition of local actions,
	\[\lambda_k(g\acts c)
		= \sigma_\lambda(g,\P)\acts\lambda_k(c).\]
	On the other hand, using the projection onto $\P'$, we find that
	\begin{align*}
		\lambda_k(g\acts c)
		& = \lambda_k(\proj_{g\acts\P'}(g\acts c))\\
		& = \lambda_k(g\acts \proj_{\P'}(c))\\
		& = \sigma_\lambda(g,\P')\acts\lambda_k(\proj_{\P'}(c))\\
		& = \sigma_\lambda(g,\P')\acts\lambda_k(c).
	\end{align*}
	Since this holds for all $c\in\P$, we conclude that $\sigma_\lambda(g,\P) = \sigma_\lambda(g,\P')$.
\end{proof}
Note that we can bundle the information of \cref{lem:parallellocals} in a commutative diagram:
\[\begin{tikzcd}
	& g\acts c \ar[dd,"\lambda_k",pos=.75] \ar[rr,"\proj_{g\acts\P'}"] &&
		g\acts c' \ar[dd,"\lambda_k",pos=.75]\\
	c \ar[rr,"\proj_{\P'}",pos=.75,crossing over] \ar[ur,"g"] \ar[dd,"\lambda_k",pos=.75] &&
		c' \ar[ur,"g"] &\\
	& \lambda_k(g\acts c) \ar[rr,equal] &&
		\lambda_k(g\acts c') \\
	\lambda_k(c) \ar[ur,right,"{\sigma_\lambda(g,\P)}",swap] \ar[rr,equal] &&
		\lambda_k(c') \ar[ur,right,"{\sigma_\lambda(g,\P')}",swap] \ar[uu,leftarrow,"\lambda_k",pos=.25,swap,crossing over] &
\end{tikzcd}\]

\begin{proposition}
\label{lem:extendlocaluniversal}
	Let $\P_0$ be a $k$-panel, let $f_0\in F_k$, and let $\lambda$ be a legal colouring of~$\Delta$. Then there exists an automorphism $g\in\U_\Delta^\lambda(\F)$ such that
	\begin{enumerate}
		\item $g$ stabilises $\P_0$,
		\item the local action $\sigma_\lambda(g,\P_0)$ is equal to $f_0$,
		\item $g$ fixes all chambers $c$ with the property that $\lambda_k(\proj_{\P_0}(c))$ is fixed by $f_0$.
	\end{enumerate}
\end{proposition}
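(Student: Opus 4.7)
The plan is to construct $g$ piece-by-piece using the wing decomposition of $\Delta$ around $\P_0$. For each $d\in\P_0$, set $W_d:=\proj_{\P_0}^{-1}(d)$, so that $\Delta=\bigsqcup_{d\in\P_0}W_d$. By \cref{prop:parallelism}(i), every panel of type $i\neq k$ is contained in a single wing, and every $k$-panel is either $\P_0$ itself, parallel to $\P_0$ (in which case it meets every wing in exactly one chamber), or entirely contained in a single wing. Viewing $f_0$ as a permutation of $\P_0$ via $\restrict{\lambda_k}{\P_0}$, I write $d':=f_0\acts d$.

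The heart of the argument is to produce, for every pair $d,d'\in\P_0$, a canonical \emph{wing isomorphism} $\phi_{d,d'}\colon W_d\to W_{d'}$ that sends $d$ to $d'$, respects adjacency, preserves $\lambda_i$ for every $i\neq k$, and commutes with projection onto parallel $k$-panels in the sense that $c$ and $\phi_{d,d'}(c)$ always lie in a common $k$-panel parallel to $\P_0$ when such a panel exists. The base case is the local product decomposition $\P_0\times S_0$ of the $(\{k\}\cup k^\perp)$-residue through $\P_0$, on which the rule $(d,s)\mapsto(d',s)$ already does the job. The extension to all of $W_d$ proceeds inductively on $\dist(\cdot,\P_0)$, using \cref{lem:closingsquares} to ``close squares'' consistently and using the parallelism structure of \cref{prop:parallelism} to propagate the definition coherently outwards. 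This is essentially \cite[Proposition~3.15]{demedts2018}; the argument is purely combinatorial and transfers to our setting without requiring finiteness of $F_k$.

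Once the wing isomorphisms are in place, define $g(c):=\phi_{d,d'}(c)$ for $c\in W_d$. Adjacencies inside a wing are preserved by $\phi_{d,d'}$, and $k$-adjacencies across wings---which occur precisely along $k$-panels parallel to $\P_0$---are correctly glued by the commuting-with-projection property, so $g$ is a well-defined type-preserving automorphism. Conditions (i) and (ii) are immediate from the construction, and (iii) follows from $\phi_{d,d}=\mathrm{id}$. To verify $g\in\U_\Delta^\lambda(\F)$, I check each local action: on panels of type $i\neq k$ (all contained in single wings) it is trivial by $\lambda_i$-preservation, hence in $F_i$; on $\P_0$ it equals $f_0\in F_k$ by construction; on $k$-panels parallel to $\P_0$ it is again $f_0\in F_k$ by \cref{lem:parallellocals}; and on $k$-panels contained in a single wing it is trivial, since the wing isomorphisms are arranged to preserve $\lambda_k$ on such panels. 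The main obstacle is the inductive construction of the wing isomorphisms, where the canonical choice at each stage must be shown to be globally coherent---this is where the closing-squares lemma and the parallelism machinery do the real work.
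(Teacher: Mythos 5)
Your overall strategy via the wing decomposition $\Delta = \bigsqcup_{d\in\P_0} W_d$ is sound and parallels \cite[Proposition~3.15]{demedts2018}, and the structural claims about which panels are contained in a single wing (via \cref{prop:parallelism}) are correct. The paper's own proof is also inductive but organised differently: it builds $g$ directly by induction on $\dist(\cdot,\P_0)$ and controls the local actions by prescribing, once and for all, elements $\varphi_i(x,y)\in F_i$ with $\varphi_i(x,y)\acts x = y$ and $\varphi_i(x,x)=\mathrm{id}$.

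However, there is a concrete gap in your proposal, at the final step where you verify that local actions lie in $F_k$. You assert that ``on $k$-panels contained in a single wing it is trivial, since the wing isomorphisms are arranged to preserve $\lambda_k$ on such panels.'' This is impossible whenever $d' := f_0\acts d \neq d$. Indeed, fix $j\in I$ with $m_{jk}=\infty$ and let $e\sim_j d$ with $e\neq d$; then $e\in W_d$, and (since $j\notin k^\perp$) the $k$-panel $\P$ through $e$ is \emph{not} parallel to $\P_0$, so $\P\subseteq W_d$. Properties (a) $\phi_{d,d'}(d)=d'$, (b) type-preservation, and (c) $\lambda_j$-preservation together force $\phi_{d,d'}(e)$ to be the chamber $j$-adjacent to $d'$ with $\lambda_j(\phi_{d,d'}(e))=\lambda_j(e)$, and hence
\[
\lambda_k\bigl(\phi_{d,d'}(e)\bigr) = \lambda_k(d') = f_0\acts\lambda_k(d) \neq \lambda_k(d) = \lambda_k(e).
\]
So $\phi_{d,d'}$ cannot preserve $\lambda_k$ on $\P$, and the local action $\sigma_\lambda(g,\P)$ is a \emph{nontrivial} element of $F_k$ sending $\lambda_k(d)$ to $\lambda_k(d')$. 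This is exactly the point where the real work lies: one must show that these forced local $k$-actions inside a wing can be chosen coherently \emph{and} land in $F_k$. The paper handles this by choosing representatives $\varphi_k(x,y)\in F_k$ in advance (which exist because $\lambda_k(d)$ and $\lambda_k(d') = f_0\acts\lambda_k(d)$ lie in the same $F_k$-orbit) and checking consistency via \cref{lem:closingsquares} and \cref{lem:parallellocals}. Your claim about condition (iii) following from $\phi_{d,d}=\mathrm{id}$ is fine as stated, but the verification that $g\in\U_\Delta^\lambda(\F)$ is incomplete without addressing the nontrivial local $k$-actions inside the wings.
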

\begin{proof}
	First, for each $i\in I$ and each pair $(x,y)$ of $i$-colours in the same orbit of $F_i$, we let $\varphi_i(x,y)$ be a fixed element of $F_i$ such that $\varphi_i(x,y)\acts x = y$. When $x=y$, we explicitly choose $\varphi_i(x,y)$ equal to the identity permutation. We will construct the automorphism $g\in\U(\F)$ in such a way that all the local actions equal either $f_0$ or some $\varphi_i(x,y)$, and we do this by induction on the distance to the $k$-panel $\P_0$.
	
	For a chamber $c$ such that $\dist(c,\P_0) = 0$, i.e.~$c\in\P_0$, let $g\acts c$ be the unique chamber in $\P_0$ such that $\lambda_i(g\acts c) = f_0\acts \lambda_i(c)$. This guarantees that $\sigma_\lambda(g,\P_0) = f_0$.
	
	Next, let
	\[B_n = \{d\in\Delta \mid \dist(d,\P_0) \leq n\}\]
	be the ``ball'' centered in the panel $\P_0$ of radius $n$. Assume by induction that $g$ is defined on $B_n$, the local actions of $g$ in $B_n$ are either $f_0$ or some $\varphi_i(x,y)$, and for each $d\in B_n$, the chamber $g\acts d$ is harmonious to $d$. In order to extend $g$ to $B_{n+1}$, let $c$ be a chamber such that $\dist(c,\P_0) = n+1$. Define the set
	\[D(c) = \{d\in\Delta \mid \text{$\dist(d,\P_0)=n$ and $d\sim c$}\} \subset B_n.\]
	Let $m = |D(c)|$. We consider two cases.
	\begin{enumerate}
		\item If $m\geq 2$, then we can invoke \cref{lem:closingsquares} to conclude that $D(c) \cup \{c\}$ is contained in an apartment $\mathcal A$ of a spherical residue of rank $m$, where $\mathcal A\setminus\{c\} \subset B_n$. Hence, the image $g\acts c$ is determined by the image of $\mathcal A\setminus\{c\}$.
		\item If $m=1$, then we have some freedom in defining $g\acts c$. Let $d\sim_i c$ be the unique chamber in $D(c)$ and consider the colours $x=\lambda_i(d)$ and $y=\lambda_i(g\acts d)$. By induction hypothesis, $x$ and $y$ are in the same orbit of $F_i$. We now define $g\acts c$ as the unique chamber $i$-adjacent to $g\acts d$ such that $\lambda_i(g\acts c) = \varphi_i(x,y)\acts\lambda_i(c)$.
	\end{enumerate}
	This extends $g$ to $B_{n+1}$. In both cases it should be clear that $c$ and $g\acts c$ are harmonious. We now show in more detail that all local actions in $B_{n+1}$ are as we wanted, i.e.~either $f_0$ or $\varphi_i(x,y)$. Consider an arbitrary panel $\P$ of type $i$, completely contained in $B_{n+1}$. We consider three cases.
	\begin{enumerate}
		\item If $\P$ is contained in $B_n$, then $\sigma_\lambda(g,\P)$ is $f_0$ or $\varphi_i(x,y)$ by the induction hypothesis.
		\item If $\P$ is not contained in $B_n$ and there is a unique chamber $d$ in $\P$ closest to $\P_0$, then $\dist(d,\P_0) = n$. Note that the number $|D(c)|$ is constant for chambers $c\in\P$ different from $d$. If $|D(c)|=1$, then the local action $\sigma_\lambda(g,\P)$ is equal to $\varphi_i(x,y)$ with $x = \lambda_i(d)$ and $y = \lambda_i(g\acts d)$. If $|D(c)|\geq 2$, then by \cref{lem:closingsquares}, $\P$ is parallel to a panel in $B_n$. The conclusion then follows from \cref{lem:parallellocals} and the induction hypothesis.
		\item Finally if $\P$ is not contained in $B_n$ and there is no unique chamber in $\P$ closest to~$\P_0$, then $\P_0$ and $\P$ are parallel by \cref{prop:parallelism} (i). It follows immediately from \cref{lem:parallellocals} that $\sigma_\lambda(g,\P) = f_0$.
	\end{enumerate}
	In conclusion, we can extend $g$ to a automorphism of the whole building, such that all local actions are either $f_0$ or some $\varphi_i(x,y)$, and in particular $g\in\U_\Delta^\lambda(\F)$.
	
	It remains to show the desired property (iii) of this automorphism. Let $c_0\in\P_0$ be a chamber that is fixed by $g$. To show that $g$ fixes all chambers $c$ such that $c_0=\proj_{\P_0}(c)$, we will use induction on the distance $\dist(c,c_0)$. For $c=c_0$ there is nothing to show. Now let $\dist(c,c_0)=n+1$ and assume that $g$ fixes all chambers in $D(c)$. If $|D(c)|\geq 2$, then also $c$ is fixed by construction. If $|D(c)|=1$, say $D(c)=\{d\}$ with $d\sim_i c$, then the local action of $g$ on the $i$-panel containing $c$ is by construction equal to $\varphi_i(x,y)$, with $x=\lambda_i(d)$ and $y=\lambda_i(g\acts d)=\lambda_i(d)=x$. This local action is the identity, so $g$ fixes not only $d$ but the whole $i$-panel. In any case we find that $g$ fixes $c$, proving our claim.
\end{proof}

The proof of the following lemma is completely similar to the proof of~\cite[Lemma~7]{smith2017} but it is worth repeating here in the setting of right-angled buildings.
\begin{lemma}
	\label{lem:local-cont}
	Let $i\in I$ and $\P\in\Res_i(\Delta)$, and consider the map
	\[\sigma_{\lambda,\P} \colon \Aut(\Delta)\to\Sym(X_i) \colon g\mapsto\sigma_\lambda(g,\P).\]
	Then $\sigma_{\lambda,\P}$ is continuous w.r.t.~the permutation topologies on $\Aut(\Delta)$ and $\Sym(X_i)$.
\end{lemma}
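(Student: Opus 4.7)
The plan is to verify continuity at an arbitrary point $g_0 \in \Aut(\Delta)$ by matching basic open neighbourhoods. Recall that the permutation topology on $\Sym(X_i)$ has a neighbourhood basis at $f_0 := \sigma_\lambda(g_0,\P)$ consisting of the sets $U_S = \{f \in \Sym(X_i) \mid \restrict{f}{S} = \restrict{f_0}{S}\}$ for finite $S \subseteq X_i$, while the permutation topology on $\Aut(\Delta)$ has a neighbourhood basis at $g_0$ consisting of the sets $V_T = \{g \in \Aut(\Delta) \mid \restrict{g}{T} = \restrict{g_0}{T}\}$ for finite $T \subseteq \Delta$. So it suffices to show that for every finite $S \subseteq X_i$ there is a finite $T \subseteq \Delta$ with $\sigma_{\lambda,\P}(V_T) \subseteq U_S$.

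The natural choice is $T := \restrict[-1]{\lambda_i}{\P}(S) \subseteq \P$, which is finite because $\restrict{\lambda_i}{\P}$ is a bijection $\P \to X_i$ by \cref{def:colourings}. The key observation is that for any chamber $c$ of type $i$, the $i$-panel through $c$ is uniquely determined by $c$; so if $g \in V_T$ and $T \neq \emptyset$, then $g$ and $g_0$ agree on some chamber of $\P$, hence $g\acts\P = g_0\acts\P$, and therefore $\restrict{\lambda_i}{g\acts\P}$ and $\restrict{\lambda_i}{g_0\acts\P}$ are the same map. (If $S = \emptyset$, take $T = \emptyset$ and there is nothing to prove.)

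Unwinding \cref{def:localaction}, for $x \in S$ with $c := \restrict[-1]{\lambda_i}{\P}(x) \in T$, one immediately computes
\[\sigma_\lambda(g,\P)\acts x = \lambda_i(g\acts c) = \lambda_i(g_0\acts c) = \sigma_\lambda(g_0,\P)\acts x = f_0\acts x,\]
so $\sigma_\lambda(g,\P) \in U_S$, as required. There is no real obstacle here: the argument is a direct unwinding of the definitions, and the conceptual content is simply that controlling $g$ on the finitely many chambers of $\P$ whose $i$-colours lie in $S$ already pins down $\sigma_\lambda(g,\P)$ on $S$.
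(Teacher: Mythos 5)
Your proof is correct and takes essentially the same approach as the paper: both reduce continuity to the basic permutation-topology neighbourhoods and choose the finite set of chambers $T = \restrict[-1]{\lambda_i}{\P}(S) \subseteq \P$ (the paper calls it $\Phi$). The only cosmetic difference is that the paper writes an arbitrary $g \in g_0 \cdot (\Aut\Delta)_{(\Phi)}$ as $g_0 h$ and invokes the cocycle identity $\sigma_\lambda(g_0 h,\P) = \sigma_\lambda(g_0, h\acts\P)\cdot\sigma_\lambda(h,\P)$, whereas you unwind \cref{def:localaction} directly; both verifications amount to the same computation.
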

\begin{proof}
	We already know that $\sigma_{\lambda,\P}$ is surjective. Let $W \subseteq \Sym{X_i}$ be an open subset and consider any $g \in \sigma_{\lambda,\P}^{-1}(W) \subseteq \Aut{\Delta}$ in the preimage. Then $\sigma_{\lambda,\P}(g)$ is contained in some open neighbourhood in $W$, i.e.~in the coset $\sigma_{\lambda,\P}(g) \cdot (\Sym X_i)_{(\Psi)}$ of the pointwise stabiliser of some finite set $\Psi \subseteq X_i$. Next, let $\Phi$ be the finite set $\{c\in\P \mid \lambda_i(c)\in\Psi\}$. Note that for all $h \in (\Aut{\Delta})_{(\Phi)}$ we have that
	\[\sigma_\lambda(gh,\P) = \sigma_\lambda(g,h\acts\P) \cdot \sigma_\lambda(h,\P)\]
	where $h$ stabilises the panel $\P$ and $\sigma_\lambda(h,\P)$ fixes the set $\Psi$. It thus follows that
	\[\sigma_{\lambda,\P}\big(g\cdot(\Aut{\Delta})_{(\Phi)}\big) \subseteq \sigma_{\lambda,\P}(g) \cdot (\Sym{X_i})_{(\Psi)} \subseteq W,\]
	hence $g$ is contained in the open neighbourhood
	\[g\cdot(\Aut{\Delta})_{(\Phi)} \subseteq \sigma_{\lambda,\P}^{-1}(W).\]
	Since $g$ was arbitrary, the preimage $\sigma_{\lambda,\P}^{-1}(W)$ is open.
\end{proof}
\begin{proposition}
	\label{prop:closed}
	The following are equivalent:
	\begin{enumerate}
		\item for each $i\in I$, the local group $F_i$ is closed in $\Sym(X_i)$;
		\item $\U_\Delta^\lambda(\F)$ is closed in $\Aut(\Delta)$.
	\end{enumerate}
\end{proposition}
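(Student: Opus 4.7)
The approach splits naturally into the two directions. For (i)~$\Rightarrow$~(ii) I would simply note that
\[\U_\Delta^\lambda(\F) = \bigcap_{i\in I}\bigcap_{\P\in\Res_i(\Delta)} \sigma_{\lambda,\P}^{-1}(F_i),\]
so when each $F_i$ is closed, continuity of each $\sigma_{\lambda,\P}$ (\cref{lem:local-cont}) makes each preimage closed, and the universal group is an intersection of closed sets.

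For (ii)~$\Rightarrow$~(i), fix a type $i\in I$, an $i$-panel $\P_0$, and $f\in\overline{F_i}$; the plan is to produce an element $g\in\U_\Delta^\lambda(\F)$ with $\sigma_\lambda(g,\P_0)=f$, which will force $f\in F_i$. First I would fix once and for all the auxiliary permutations $\varphi_j(x,y)$ used in the proof of \cref{lem:extendlocaluniversal}, making that construction deterministic in its input. For each finite $Y\subseteq X_i$ pick $f_Y\in F_i$ with $f_Y|_Y = f|_Y$ (possible since $f\in\overline{F_i}$) and let $g_Y\in\U_\Delta^\lambda(\F)$ be the corresponding output of \cref{lem:extendlocaluniversal}, so that $g_Y$ stabilises $\P_0$ and $\sigma_\lambda(g_Y,\P_0)=f_Y$.

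The key claim is that whenever $Y'\supseteq Y\supseteq\{\lambda_i(\proj_{\P_0}(c))\}$ one has $g_Y\acts c = g_{Y'}\acts c$; I would prove this by induction on $n=\dist(c,\P_0)$. The case $|D(c)|=1$ uses that the unique $d\in D(c)$ satisfies $\proj_{\P_0}(d)=\proj_{\P_0}(c)$ (by the gate property, since $d$ lies on a minimal gallery from $c$ to $\proj_{\P_0}(c)$), so the inductive hypothesis applies to $d$ and the deterministic $\varphi_j$-recipe forces equality at $c$. The case $|D(c)|\ge 2$ invokes \cref{lem:closingsquares}: every chamber of the spherical-residue apartment containing $\{c\}\cup D(c)$ still projects onto $\proj_{\P_0}(c)$ by the gate property, so the inductive hypothesis covers the entire apartment and the square-closing then determines $g_Y\acts c$. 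This last case is the step I expect to be the main obstacle.

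Granted the claim, $(g_Y)$ converges pointwise to a map $g\colon\Delta\to\Delta$ that is injective and adjacency-preserving. To check bijectivity I would argue surjectivity by induction along a gallery from $\P_0$: inside $\P_0$ every $d$ has preimage of colour $f^{-1}(\lambda_i(d))$, and inductively, given $c_k$ with $g\acts c_k = d_k$, any $j$-neighbour $d_{k+1}$ of $d_k$ has preimage in the $j$-panel through $c_k$ because the pointwise limit $\lim_Y\sigma_\lambda(g_Y,\P)$ lies in $\overline{F_j}\le\Sym(X_j)$ and is a bijection on colours. Consequently $g\in\Aut(\Delta)$ and pointwise convergence upgrades to convergence in the permutation topology. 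Closedness of $\U_\Delta^\lambda(\F)$ then places $g$ in $\U_\Delta^\lambda(\F)$, and continuity of $\sigma_{\lambda,\P_0}$ yields $\sigma_\lambda(g,\P_0)=\lim f_Y = f$, so $f\in F_i$.
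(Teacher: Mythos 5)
Your proof is correct and follows essentially the same strategy as the paper: (i)~$\Rightarrow$~(ii) by writing $\U_\Delta^\lambda(\F)$ as an intersection of preimages under the continuous maps $\sigma_{\lambda,\P}$ of \cref{lem:local-cont}, and (ii)~$\Rightarrow$~(i) by extending approximations of $f\in\overline{F_i}$ to elements of $\U$ via \cref{lem:extendlocaluniversal} with the auxiliary permutations $\varphi_j(x,y)$ fixed once and for all, then passing to the limit and invoking closedness. You supply the convergence argument that the paper merely asserts (``the sequence $(g_n)$ converges''), and you use a net indexed by finite subsets of $X_i$ rather than the paper's sequence $(f_n)\to f$, which is in fact the more careful choice since a sequential argument only detects non-closedness when $X_i$ is countable.
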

\begin{proof}
	For the implication {(i) $\Rightarrow$ (ii)}, we give a different proof from the one in \cite{demedts2018} that also works in the case where $\Delta$ is not locally finite, following the ideas of \cite[Lemma 14]{smith2017}. For each $i\in I$ and $\P\in\Res_i(\Delta)$, we consider the map
	\[\sigma_{\lambda,\P} \colon \Aut(\Delta)\to\Sym(X_i) \colon g\mapsto\sigma_\lambda(g,\P),\]
	which is continuous by \cref{lem:local-cont}. Our claim then follows from the observation that
	\[\U_\Delta^\lambda(\F) = \bigcap_{i\in I}\:\bigcap_{\P\in\Res_i(\Delta)}\sigma_{\lambda,\P}^{-1}(F_i).\]
	We prove the implication {(ii) $\Rightarrow$ (i)} by contraposition, so assume that $F_i$ is not closed for some $i\in I$. Recall that the permutation topology agrees with the topology of pointwise convergence, and let $(f_n)_{n\in\mathbb N}$ be a sequence of elements of $F_i$ such that $\lim f_n=f$ exists but $f\notin F_i$. Let $\P$ be an $i$-residue. For every $n\in\mathbb N$, let $g_n$ be the automorphism of $\Delta$ given by extending $f_n$ as in \cref{lem:extendlocaluniversal}. As long as we use the same permutations $\varphi_i(x,y)$ in the proof of \cref{lem:extendlocaluniversal} for different $n$, the sequence $(g_n)_{n\in\mathbb N}$ converges to an automorphism $g$ that stabilises $\P$ with $\sigma_\lambda(g,\P)=f$, hence $g=\lim g_n\notin\U_\Delta^\lambda(\F)$. This shows that $\U_\Delta^\lambda(\F)$ is not closed in $\Aut(\Delta)$.
\end{proof}

\begin{proposition}[local compactness]
\label{prop:locallycompact}
	Assume that all $F_i$ are closed in $\Sym(X_i)$. Then the following are equivalent:
	\begin{enumerate}
		\item every point stabiliser in every $F_i$ is compact;
		\item every chamber stabiliser in $\U(\F)$ is compact;
		\item $\U(\F)$ is a locally compact topological group.
	\end{enumerate}
\end{proposition}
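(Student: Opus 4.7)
The plan is to close the cycle $(ii) \Rightarrow (iii) \Rightarrow (i) \Rightarrow (ii)$. Throughout, I rely on the fact that $\U := \U_\Delta^\lambda(\F)$ is closed in $\Aut(\Delta)$ by \cref{prop:closed}, so compactness of a subgroup of $\U$ reduces via \cref{prop:permtop}(v) to finiteness of all its orbits on $\Delta$.

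The implication $(ii) \Rightarrow (iii)$ is immediate: each chamber stabiliser $\U_c$ is the pointwise stabiliser of the finite set $\{c\}$ and hence open in $\U$, so if compact it serves as a compact identity neighbourhood.

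For $(i) \Rightarrow (ii)$, I fix a chamber $c$ and prove by induction on $\dist(c,c')$ that every $\U_c$-orbit $\U_c\acts c'$ is finite. The base case is trivial. For the inductive step at distance $n+1$, pick $c''\sim_i c'$ with $\dist(c,c'')=n$ and partition $\U_c$ by the image of $c''$, writing $\U_c\acts c' = \bigcup_{c'''\in\U_c\acts c''} h_{c'''}(\U_{c,c''}\acts c')$ where $h_{c'''}$ is any element sending $c''$ to $c'''$. By induction the index set is finite. The pointwise stabiliser $\U_{c,c''}:=\U_{(\{c,c''\})}$ fixes $c''$, so it preserves the $i$-panel through $c''$ and acts on that panel through $(F_i)_{\lambda_i(c'')}$; by hypothesis this local point stabiliser is compact, hence has finite orbits on $X_i$, so $\U_{c,c''}\acts c'$ is finite and therefore so is $\U_c\acts c'$.

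For $(iii) \Rightarrow (i)$, \cref{prop:permtop}(vi) yields a finite set $\Phi\subset\Delta$ with $\U_{(\Phi)}$ compact. Given $i\in I$ and $x\in X_i$, I invoke \cref{lem:finiteinwing} to obtain an $i$-panel $\P$ and a chamber $c\in\P$ with $\lambda_i(c)=x$ and $\proj_\P(\Phi)=\{c\}$. The continuous local-action map $\sigma_{\lambda,\P}$ of \cref{lem:local-cont} sends $\U_{(\Phi\cup\{c\})}$ into $(F_i)_x$, and I claim surjectivity. Given $f\in(F_i)_x$, \cref{lem:extendlocaluniversal} produces $g\in\U$ stabilising $\P$ with $\sigma_\lambda(g,\P)=f$ that moreover fixes every chamber $c''$ for which $\lambda_i(\proj_\P(c''))$ lies in $\Fix(f)$; since each $\phi\in\Phi$ projects to $c$ and $f$ fixes $\lambda_i(c)=x$, we get $g\in\U_{(\Phi\cup\{c\})}$. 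Hence $(F_i)_x$ is the continuous image of the compact group $\U_{(\Phi\cup\{c\})}$, and therefore compact.

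The main obstacle is the surjectivity onto the whole of $(F_i)_x$ in the step $(iii)\Rightarrow(i)$: without the wing control provided by \cref{lem:finiteinwing}, the image $\sigma_{\lambda,\P}(\U_{(\Phi\cup\{c\})})$ would merely be a closed subgroup of $(F_i)_x$, from which one cannot conclude that $(F_i)_x$ itself is compact. It is the combination of \cref{lem:finiteinwing} (to tuck the finite set $\Phi$ behind a single chamber of a cleverly chosen panel) with \cref{lem:extendlocaluniversal} (to freely realise any $f\in(F_i)_x$ by a global automorphism that does not disturb $\Phi$) that makes the argument go through.
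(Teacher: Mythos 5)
Your proof is correct and follows essentially the same route as the paper: the cycle (ii) $\Rightarrow$ (iii) $\Rightarrow$ (i) $\Rightarrow$ (ii), with \cref{lem:finiteinwing} and \cref{lem:extendlocaluniversal} doing the heavy lifting for (iii) $\Rightarrow$ (i), and control of orbits along a minimal gallery for (i) $\Rightarrow$ (ii). The only cosmetic difference is in (iii) $\Rightarrow$ (i): you repackage the argument as realising $(F_i)_x$ as the continuous image of the compact group $\U_{(\Phi\cup\{c\})}$ under $\sigma_{\lambda,\P}$, whereas the paper argues directly that infinite orbits of $(F_i)_x$ would produce infinite orbits of $\U_{(\Phi)}$ on $\P$ -- the same content in slightly different clothing.
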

\begin{proof}
	Since all $F_i$ are closed, $\U(\F)$ is closed by \cref{prop:closed}. Its point stabilisers are open subgroups and hence closed as well.
	
	For the	implication {(i) $\Rightarrow$ (ii)}, consider an arbitrary chamber $c$. We need to show that the stabiliser $\U_c$ has only finite orbits. Take a second chamber $d$ and let
	\[ c=c_0 \sim c_1 \sim c_2 \sim \dots \sim c_{k-1} \sim c_k=d \]
	be a minimal gallery from $c$ to $d$. Then $\big|{\U_c}\acts d\big|$ is at most
	\begin{align*}
		\big|{\U_{c_0}}\acts c_1\big| \cdot \big|{\U_{\{c_0,c_1\}}}\acts c_2\big| \dotsm \big|{\U_{\{c_0,\dots,c_{k-1}\}}}\acts c_k\big|
		 \leq \big|{\U_{c_0}}\acts c_1\big| \cdot \big|{\U_{c_1}}\acts c_2\big| \dotsm \big|{\U_{c_{k-1}}}\acts c_k\big|.
	\end{align*}
	Since the local action of every point stabiliser in the latter product is permutationally isomorphic to a point stabiliser in some $F_i$, and the $F_i$ are compact, all these factors are finite. 

	\smallskip
	The implication {(ii) $\Rightarrow$ (iii)} is obvious.

	\smallskip
	Finally we show the implication {(iii) $\Rightarrow$ (i)}. By local compactness of the universal group we can find a finite set of chambers $\Phi$ whose pointwise stabiliser in $\U(\F)$ is compact. Thus, every orbit of $\U_{(\Phi)}$ is finite.
	
	Pick any index $i$ and any $i$-colour $x\in X_i$. Then by \cref{lem:finiteinwing}, there exists an $i$-panel $\P$ such that $\proj_\P(\Phi)$ equals a single chamber $c$ with $\lambda_i(c)=x$. From \cref{lem:extendlocaluniversal} it follows that $X_i$ cannot contain orbits of $(F_i)_x$ of infinite size, since otherwise the corresponding chambers in $\P$ would lie in an infinite orbit of $\U_{(\Phi)}$. Thus, every point stabiliser in every local group $F_i$ is compact.
\end{proof}

\begin{proposition}[compact generation of panel stabilisers]
	\label{prop:stabcompgen}
	Assume that $\U(\F)$ is locally compact. Then, for each $j\in I$, the following are equivalent:
	\begin{enumerate}
		\item the local group $F_j$ is compactly generated;
		\item the setwise stabiliser in $\U(\F)$ of a $j$-panel is compactly generated.
	\end{enumerate}
\end{proposition}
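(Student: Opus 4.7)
The plan is to reduce everything to properties of the restriction map
\[\pi := \restrict{\sigma_{\lambda,\P}}{H}\colon H \to F_j,\qquad H := \U(\F)_{\{\P\}}.\]
The cocycle identity $\sigma_\lambda(gh,\P)=\sigma_\lambda(g,h\acts\P)\sigma_\lambda(h,\P)$, combined with $h\acts\P=\P$ for $h\in H$, shows that $\pi$ is a group homomorphism; it is continuous by \cref{lem:local-cont} and surjective by \cref{lem:extendlocaluniversal}. Its kernel is the pointwise stabiliser $K := \U(\F)_{(\P)} = \bigcap_{c\in\P}\U(\F)_c$. Note that $H$ is open in $\U(\F)$: fixing $c_0\in\P$, the evaluation $g\mapsto g\acts c_0$ is continuous from $\U(\F)$ into the discrete space $\Delta$, and $H$ is exactly the preimage of the open set $\P$. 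In particular, $H$ is locally compact. The direction (ii) $\Rightarrow$ (i) is then immediate: if $S\subseteq H$ is a compact generating set, then $\pi(S)$ is a compact generating set of $F_j$.

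For the direction (i) $\Rightarrow$ (ii), I would first establish that the kernel $K$ is compact. By \cref{prop:locallycompact} every chamber stabiliser $\U(\F)_c$ is compact, and $K$ is a closed subgroup of any one of them. Next, pick any $x\in X_j$ and set $V := (F_j)_x$, a compact open subgroup of $F_j$ (compactness again by \cref{prop:locallycompact}). Its preimage $V_0 := \pi^{-1}(V)$ is an open subgroup of $H$ sitting in a short exact sequence
\[1 \longrightarrow K \longrightarrow V_0 \longrightarrow V \longrightarrow 1\]
of topological groups with both $K$ and $V$ compact, so $V_0$ itself is compact.

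Finally, let $C$ be a compact generating set of $F_j$. Since $V$ is a compact open subgroup of $F_j$, the compact set $C$ is covered by finitely many cosets $f_1V,\dots,f_nV$, whence $V\cup\{f_1,\dots,f_n\}$ generates $F_j$. Lifting each $f_i$ to some $\tilde f_i\in\pi^{-1}(f_i)$, the finite union $T := V_0\cup\{\tilde f_1^{\pm 1},\dots,\tilde f_n^{\pm 1}\}$ is a compact subset of $H$. For any $h\in H$, I would express $\pi(h)$ as a word in $V\cup\{f_i^{\pm 1}\}$ and lift letterwise, obtaining $h'\in\langle T\rangle$ with $\pi(h')=\pi(h)$; then $h = h'k$ for some $k\in K\subseteq V_0\subseteq T$, so $h\in\langle T\rangle$. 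The main technical point is the compactness of the extension $V_0$, a classical fact about topological groups; once that is available, compact generation transfers between $F_j$ and $H$ by this elementary bookkeeping.
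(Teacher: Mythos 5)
Your route is structurally different from the paper's and in places cleaner: you set up the continuous surjective homomorphism $\pi = \restrict{\sigma_{\lambda,\P}}{H}\colon H = \U_{\{\P\}}\to F_j$ with kernel $K=\U_{(\P)}$ and transfer compact generation along it. This makes (ii) $\Rightarrow$ (i) genuinely immediate, and it anticipates what the paper later records as \cref{referee1}. The paper's proof avoids naming the homomorphism and instead argues directly with the permutation isomorphism $\restrict{\U}{\P}\cong F_j$ of \cref{localpermiso}, extracting finite coset representatives over the compact open subgroups $\U_c$ and $(F_j)_x$ in both directions.

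The one step that is not fully justified as written is the compactness of $V_0=\pi^{-1}(V)$. A short exact sequence $1\to K\to V_0\to V\to 1$ with $K$ and $V$ compact yields $V_0$ compact only when the sequence is \emph{topologically} exact, i.e.\ when the induced bijection $V_0/K\to V$ is a homeomorphism, equivalently when $\restrict{\pi}{V_0}$ is open. That is not automatic from continuity and surjectivity: give an infinite compact group the discrete topology and map it identically to the same group with its original topology; the kernel is trivial and the target is compact, yet the source is not. Openness of $\pi$ can in fact be extracted from \cref{lem:extendlocaluniversal}, but this needs an argument, and the general open mapping theorem for locally compact groups has a $\sigma$-compactness hypothesis that is not assumed here. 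The cleanest repair is to observe that $V_0$ is literally the chamber stabiliser $\U_c$, where $c$ is the chamber of $\P$ with $\lambda_j(c)=x$: for $g\in H$, $\pi(g)$ fixes $x$ if and only if $g\acts c=c$, and since any element fixing $c$ automatically stabilises $\P$, we get $V_0 = H\cap\U_c = \U_c$, which is compact by \cref{prop:locallycompact}. With this observation in place, the remainder of your lifting argument is correct. Note that $\U_c$ is precisely the subgroup the paper's proof is built around.
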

\begin{proof}
	Let $\P$ be a $j$-panel and $c$ a chamber in $\P$. Consider the isomorphism~$\varphi$ mapping $\restrict{\U}{\P}$ to $F_j$ from \cref{localpermiso}. Clearly $\varphi$ maps the subgroup $\restrict{\U_c}{\P}$ to the point stabiliser $(F_j)_x$ where $x=\lambda_j(c)$; note that $(F_j)_x$ is compact by assumption.
	
	We will first show that (ii) $\Rightarrow$ (i).
	So assume that $\U_{\{\P\}}=\langle Q\rangle$ with $Q$ a compact subset of $\U$. Then the union of cosets $\bigcup_{q\in Q} q\U_c$ is an open cover of $Q$; by compactness, there is some finite subcover with representatives $\{q_1,\dots,q_k\}\subseteq Q$. It follows that $\U_{\{\P\}} = \langle q_1,\dots,q_k,\U_c\rangle$. Then
	\[F_j = \langle\varphi(\restrict{q_1}{\P}),\dots,\varphi(\restrict{q_k}{\P}),(F_j)_x\rangle,\]
	which shows that $F_j$ is compactly generated.

	\smallskip
	Next, we show that (i) $\Rightarrow$ (ii).
	So assume that $F_j=\langle Q\rangle$ with $Q$ a compact subset of $F_j$. Again, we find, by compactness, that $F_j=\langle q_1,\dots,q_k,(F_j)_x\rangle$ with $\{q_1,\dots,q_k\}\subseteq Q$. For each $1\leq i\leq k$, let $h_i$ be any extension of the element $\varphi^{-1}(q_i)\in\restrict{\U}{\P}$ defined on the whole building, i.e., let $h_i\in\U_{\{\P\}}$ be such that $\varphi(\restrict{h_i}{\P})=q_i$. Now consider the compactly generated subgroup
	\[H = \langle h_1,\dots,h_k,\U_c\rangle \leq \U_{\{\P\}}.\]
	We claim that, in fact, $H = \U_{\{\P\}}$.
	Indeed, for each $g\in\U_{\{\P\}}$, there is some $h\in H$ with $g\acts c=h\acts c$. But then $h^{-1}g\in\U_c$ so that $g\in H$ as claimed.
\end{proof}

As pointed out by the referee, one can strenghten \cref{prop:stabcompgen} to the following explicit structural description of a panel stabiliser.
\begin{proposition}
\label{referee1}
	Assume $\U(\F)$ is locally compact. Let $\P$ be a $j$-panel. Then $\U_{(\P)}$ is a compact normal subgroup of $\U_{\{\P\}}$, and moreover, the quotient $\U_{\{\P\}} / \U_{(\P)}$ and $F_j$ are isomorphic as topological groups.
\end{proposition}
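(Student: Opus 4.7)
The plan is to exhibit $\U_{\{\P\}} \to F_j$ as an explicit continuous open surjective homomorphism with kernel $\U_{(\P)}$, and then invoke the first isomorphism theorem for topological groups.

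First I would dispose of the two easy parts. \emph{Normality} of $\U_{(\P)}$ in $\U_{\{\P\}}$ is immediate: if $g \in \U_{\{\P\}}$ and $h \in \U_{(\P)}$, then for every $c \in \P$, we have $g^{-1}\acts c \in \P$, so $(ghg^{-1})\acts c = g\acts(g^{-1}\acts c) = c$. For \emph{compactness} of $\U_{(\P)}$, fix any chamber $c \in \P$. Since $\U(\F)$ is locally compact, the stabiliser $\U_c$ is compact by \cref{prop:locallycompact}. The subgroup $\U_{(\P)}$ is a closed (hence compact) subgroup of $\U_c$, being an intersection of point stabilisers in the Hausdorff group $\U_c$.

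The heart of the proof is the topological quotient identification. Consider the map $\sigma_{\lambda,\P}$ from \cref{lem:local-cont}, restricted to $\U_{\{\P\}}$. Using the cocycle identity $\sigma_\lambda(gh,\P) = \sigma_\lambda(g,h\acts\P)\cdot\sigma_\lambda(h,\P)$, the condition $h \acts\P=\P$ for $h \in \U_{\{\P\}}$ makes $\sigma_{\lambda,\P}\colon\U_{\{\P\}}\to F_j$ a group homomorphism. It is continuous by \cref{lem:local-cont}, and surjective by \cref{lem:extendlocaluniversal}. Its kernel is exactly $\U_{(\P)}$, since $\sigma_\lambda(g,\P) = \mathrm{id}$ together with the injectivity of $\lambda_j$ on $\P$ force $g$ to fix $\P$ pointwise. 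It therefore induces a continuous bijective homomorphism $\bar\sigma\colon \U_{\{\P\}}/\U_{(\P)} \to F_j$.

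The main obstacle is showing that $\bar\sigma$ is actually a \emph{topological} isomorphism, i.e.\ that $\sigma_{\lambda,\P}$ is open. For this it suffices to verify openness at the identity. A basic open neighbourhood of $\mathrm{id}$ in $F_j$ has the form $(F_j)_{(\Psi)}$ for some finite $\Psi \subseteq X_j$. Set $\Phi = \{c \in \P \mid \lambda_j(c) \in \Psi\}$, which is finite, and consider the basic open neighbourhood $U_\Phi := \U_{\{\P\}} \cap \Aut(\Delta)_{(\Phi)}$ of the identity in $\U_{\{\P\}}$. I claim $\sigma_{\lambda,\P}(U_\Phi) \supseteq (F_j)_{(\Psi)}$: given $f \in (F_j)_{(\Psi)}$, apply \cref{lem:extendlocaluniversal} with $\P_0 = \P$ and $f_0 = f$ to obtain $g \in \U_{\{\P\}}$ with $\sigma_\lambda(g,\P) = f$ and $g$ fixing every chamber whose projection to $\P$ has $j$-colour fixed by $f$; in particular $g$ fixes every $c \in \Phi$ (since $\proj_\P(c) = c$ and $\lambda_j(c) \in \Psi$), so $g \in U_\Phi$. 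Hence $\sigma_{\lambda,\P}(U_\Phi)$ contains an open neighbourhood of $\mathrm{id}$ in $F_j$, which together with the group structure gives openness everywhere. Consequently $\bar\sigma$ is the desired topological isomorphism $\U_{\{\P\}}/\U_{(\P)} \cong F_j$.
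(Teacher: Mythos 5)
Your decomposition (continuous surjective homomorphism $\sigma_{\lambda,\P}\colon\U_{\{\P\}}\to F_j$ with kernel $\U_{(\P)}$, plus the explicit check that this kernel is compact) is exactly the route the paper takes, though the paper's argument is essentially a one-liner and leaves the openness of $\sigma_{\lambda,\P}$ implicit. Making it explicit is worthwhile, but the way you argue openness has the logic running in the wrong direction.

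Openness of $\sigma_{\lambda,\P}$ at the identity means: for \emph{every} open neighbourhood $V$ of the identity in $\U_{\{\P\}}$, the image $\sigma_{\lambda,\P}(V)$ contains an open neighbourhood of the identity in $F_j$. What you actually prove is the converse-flavoured statement: for every basic open $(F_j)_{(\Psi)}$ there \emph{exists} a specific open $U_\Phi$ (with $\Phi\subseteq\P$) such that $\sigma_{\lambda,\P}(U_\Phi)\supseteq (F_j)_{(\Psi)}$. That does not yield openness; for instance, the identity map $\mathbb R_{\mathrm{disc}}\to\mathbb R$ is a continuous bijective homomorphism satisfying your property but is not open. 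The sets $U_\Phi$ with $\Phi\subseteq\P$ do \emph{not} form a neighbourhood base of the identity in $\U_{\{\P\}}$, because a finite subset of $\Delta$ that one must fix pointwise need not be contained in $\P$, and fixing $\Phi\cap\P$ does not force fixing the rest of $\Phi$.

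The fix is short and in fact already contained in the ingredients you quote. Start from an \emph{arbitrary} basic open neighbourhood $U=\U_{\{\P\}}\cap\Aut(\Delta)_{(\Phi)}$ with $\Phi\subseteq\Delta$ finite. Put $\Psi=\lambda_j\bigl(\proj_\P(\Phi)\bigr)$, a finite subset of $X_j$. For $f\in(F_j)_{(\Psi)}$, apply \cref{lem:extendlocaluniversal} with $\P_0=\P$, $f_0=f$: property (iii) says the resulting $g\in\U_{\{\P\}}$ fixes every chamber whose projection to $\P$ has $j$-colour fixed by $f$, hence all of $\Phi$, so $g\in U$ and $\sigma_{\lambda,\P}(g)=f$. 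Thus $(F_j)_{(\Psi)}\subseteq\sigma_{\lambda,\P}(U)$, which is the correct quantifier order and gives openness. With this correction the rest of your argument (normality, compactness of $\U_{(\P)}$ as a closed subgroup of the compact $\U_c$, and the kernel computation) is sound.
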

\begin{proof}
	Consider the map $\sigma_{\lambda,\P}$ from \cref{lem:local-cont}. This map $\sigma$ restricts to a continuous epimorphism $\U_{\{\P\}} \to F_j$ with compact kernel $\U_{(\P)}$.
\end{proof}

\begin{proposition}[discreteness]
	\label{prop:discrete}
	The following are equivalent:
	\begin{enumerate}
		\item Each $F_i$ acts freely on $X_i$;
		\item $\U(\F)$ acts freely on $\Delta$;
		\item $\U(\F)$ is a discrete topological group.
	\end{enumerate}
\end{proposition}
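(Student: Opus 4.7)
My plan is to establish the three-step cycle (i) $\Rightarrow$ (ii) $\Rightarrow$ (iii) $\Rightarrow$ (i). The first two implications follow quickly from the gallery structure of $\Delta$ combined with \cref{prop:permtop}, while the converse requires the technical tools developed earlier in this section.

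For (i) $\Rightarrow$ (ii), I assume every $F_i$ acts freely on $X_i$ and let $g \in \U(\F)$ fix some chamber $c_0$; I want to show $g$ is the identity by induction on the gallery distance from $c_0$. In the inductive step, if $g$ fixes a chamber $c'$ and $c \sim_i c'$, then the local action $\sigma_\lambda(g,\P) \in F_i$ at the $i$-panel $\P$ through $c$ and $c'$ fixes the colour $\lambda_i(c')$; freeness of $F_i$ forces this local action to be trivial, so $g$ fixes $\P$ pointwise and in particular $g\acts c = c$. The implication (ii) $\Rightarrow$ (iii) is then immediate from \cref{prop:permtop}~(iv), since a free action means every singleton $\{c\}$ is a finite set with trivial pointwise stabiliser.

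For (iii) $\Rightarrow$ (i), I argue by contraposition. Suppose some $F_i$ admits a non-trivial element $f$ fixing a colour $x \in X_i$. Given an arbitrary finite $\Phi \subseteq \Delta$, \cref{lem:finiteinwing} yields an $i$-panel $\P$ such that $\proj_\P(\Phi)$ consists of a single chamber $c$ with $\lambda_i(c) = x$. I then invoke \cref{lem:extendlocaluniversal} to obtain $g \in \U(\F)$ that stabilises $\P$, realises $f$ as its local action on $\P$, and fixes every chamber whose projection to $\P$ has $i$-colour fixed by $f$. In particular $g$ pointwise fixes $\Phi$, while $g$ is non-trivial because its local action at $\P$ is non-trivial. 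Hence no finite subset of $\Delta$ has trivial pointwise stabiliser, and \cref{prop:permtop}~(iv) shows that $\U(\F)$ is not discrete.

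The hard part is this last implication: discreteness is expressed in terms of arbitrary finite chamber sets, and we must exhibit a non-trivial element of $\U(\F)$ pointwise fixing any prescribed $\Phi$. The combination of \cref{lem:finiteinwing} (concentrating $\proj_\P(\Phi)$ onto a single chamber of the prescribed colour) and \cref{lem:extendlocaluniversal} (globally extending $f$ with controlled fix set) is exactly what bridges the local-to-global gap. A minor subtlety is the no-isolated-nodes hypothesis in \cref{lem:finiteinwing}: if the diagram has an isolated node, the building factors as a direct product and the argument should be restricted to the non-isolated factor, with the isolated part handled via its own panel group.
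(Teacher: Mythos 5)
Your proof follows exactly the paper's three-step cycle (i)\,$\Rightarrow$\,(ii)\,$\Rightarrow$\,(iii)\,$\Rightarrow$\,(i) and invokes the same two key lemmas, \cref{lem:finiteinwing} and \cref{lem:extendlocaluniversal}, for the contrapositive of (iii)\,$\Rightarrow$\,(i); the inductive phrasing of (i)\,$\Rightarrow$\,(ii) is just a spelled-out version of the paper's appeal to connectedness. So the core argument is correct and matches the paper.

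Your closing remark about the no-isolated-nodes hypothesis, however, is more than a minor subtlety, and your proposed patch would not work. The paper's own proof of (iii)\,$\Rightarrow$\,(i) quietly inherits this hypothesis from \cref{lem:finiteinwing} but never states it, and the proposition is in fact \emph{false} when the diagram has an isolated node, so no amount of restricting to the non-isolated factor can rescue the implication. Concretely, take $I=\{1,2\}$ with $m_{12}=2$, so $\Delta\cong X_1\times X_2$ with both nodes isolated, and take $F_1=\Sym(3)$ acting on $X_1=\{1,2,3\}$ and $F_2=1$ acting on $X_2=\{1,2\}$. Then $\U(\F)\cong F_1\times F_2\cong\Sym(3)$, which is discrete because the finite set $\Phi=\{(1,1),(2,1),(3,1)\}$ has trivial pointwise stabiliser, yet $F_1$ visibly does not act freely and the element $((1\,2),\mathrm{id})$ fixes the chamber $(3,1)$. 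So (iii) holds while (i) and (ii) fail. (The rank-one case $I=\{j\}$ shows the same phenomenon even more starkly.) The correct conclusion is that the equivalence in the proposition requires the diagram to have no isolated nodes, a hypothesis that you and the paper should both state explicitly rather than try to argue around.
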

\begin{proof}
	For the implication {(i) $\Rightarrow$ (ii)}, suppose that all $F_i$ act freely and that $g\in\U(\F)$ fixes some chamber $c$. Then for each $i\in I$, the local action of $g$ at the $i$-panel containing~$c$ is a permutation in $F_i$ with a fixed point, and hence is the identity. It follows that every panel containing $c$ is fixed by $g$. By connectedness of the building, $g$ is the identity.
	
	\smallskip
	The implication {(ii) $\Rightarrow$ (iii)} is obvious.
	
	\smallskip
	We prove the final implication {(iii) $\Rightarrow$ (i)} by contraposition. Suppose some $F_j$ does not act freely on $X_j$ and let $x\in X_j$ be a colour such that the stabiliser $(F_j)_x$ is non-trivial. Consider any finite set of chambers $\Phi$. By \cref{lem:finiteinwing}, there exists a $j$-panel $\P$ such that $\proj_\P(\Phi)=\{c\}$, with $\lambda_j(c)=x$. Let $f\in (F_j)_x$ be non-trivial. Then by \cref{lem:extendlocaluniversal}, this permutation~$f$ extends to a non-trivial element of the universal group that stabilises $\Phi$. Hence, no pointwise stabiliser of a finite set of chambers is trivial.
\end{proof}

\begin{definition}
	For a permutation group $F\leq\Sym(X)$, denote by $F^+$ the subgroup generated by all point stabilisers in $F$. Similarly, denote by $\U(\F)^+$ the subgroup of the universal group $\U(\F)$ generated by all chamber stabilisers.
\end{definition}

\begin{lemma}
	\label{lem:pluslocal}
	The local actions of $\U(\F)^+$ on the $i$-panels all lie in $F_i^+$.
\end{lemma}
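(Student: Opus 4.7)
By the cocycle formula $\sigma_\lambda(g_1g_2,\P) = \sigma_\lambda(g_1, g_2\acts\P)\cdot\sigma_\lambda(g_2,\P)$, together with the fact that $F_i^+$ is a subgroup of $F_i$, the statement reduces to checking the claim on the generators of $\U^+$, namely the elements $g \in \U_c$ for $c$ an arbitrary chamber. I would prove the claim for such a $g$ by induction on $n = \dist(c, \P)$.

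The base case $n = 0$ is immediate: the chamber $c$ lies in $\P$ and is fixed by $g$, so $\sigma_\lambda(g, \P)$ fixes $\lambda_i(c)$ and hence lies in the point stabiliser $(F_i)_{\lambda_i(c)} \subseteq F_i^+$.

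For the inductive step with $n \geq 1$, I would pick a minimal gallery $c = c_0 \sim_{j_0} c_1 \sim_{j_1} \dots \sim_{j_{n-1}} c_n = \proj_\P(c)$, and note that $j_{n-1} \neq i$, since otherwise $c_{n-1}$ would itself belong to $\P$, contradicting $\dist(c,\P) = n$. The plan is then to compare the evolution of the colour $\lambda_i$ along the gallery and along its $g$-image. On a non-$i$-step ($j_k \neq i$), the colour $\lambda_i$ is constant across the $j_k$-panel, so both $\lambda_i(c_{k+1}) = \lambda_i(c_k)$ and $\lambda_i(g\acts c_{k+1}) = \lambda_i(g\acts c_k)$. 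On an $i$-step ($j_k = i$), the local action $\tau_k := \sigma_\lambda(g, \P(c_k, i))$ at the $i$-panel through $c_k, c_{k+1}$ sends $\lambda_i(c_{k+1})$ to $\lambda_i(g\acts c_{k+1})$; since $\P(c_k,i)$ has distance at most $k$ to $c$ and $k \leq n-2 < n$ (as $j_{n-1} \neq i$ forces every $i$-step to occur strictly earlier), the inductive hypothesis gives $\tau_k \in F_i^+$.

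Tracing through the gallery then yields $\lambda_i(g\acts c_n) = \psi(\lambda_i(c_n))$, where $\psi = \tau_{k^*} \in F_i^+$ with $k^*$ the index of the last $i$-step, or $\psi = 1$ if no $i$-step occurs. Since $\sigma_\lambda(g,\P)(\lambda_i(c_n)) = \lambda_i(g\acts c_n) = \psi(\lambda_i(c_n))$, the composition $\psi^{-1}\sigma_\lambda(g,\P)$ fixes $\lambda_i(c_n)$ and therefore belongs to $(F_i)_{\lambda_i(c_n)} \subseteq F_i^+$. Consequently $\sigma_\lambda(g,\P) \in F_i^+\cdot F_i^+ = F_i^+$, closing the induction.

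The most delicate step, which I expect to be the main obstacle, is the bookkeeping along the gallery: one must verify that the invariance of $\lambda_i$ on non-$i$-steps correctly chains with the updates on $i$-steps so that in the end only $\tau_{k^*}$ (rather than some composite of all $\tau_k$) is needed to carry $\lambda_i(c_n)$ to $\lambda_i(g\acts c_n)$. This is what allows the correction permutation $\psi$ to be taken from $F_i^+$ via a single application of the inductive hypothesis.
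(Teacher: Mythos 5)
Your proof is correct and follows essentially the same approach as the paper's: reduce to generators $g\in\U_c$, induct on $\dist(c,\P)$, use the point stabiliser $(F_i)_{\lambda_i(c)}$ for the base case, and close the inductive step by comparing $\sigma_\lambda(g,\P)$ to the local action at a closer $i$-panel via the invariance of $\lambda_i$ on non-$i$-steps. The paper phrases the inductive step as a single jump to an adjacent $i$-panel $\P'$ at distance one less, whereas you trace the whole minimal gallery and jump directly to the $i$-panel of the last $i$-step; these are the same idea, and your explicit tracking of the colour chain is exactly the (short) verification the paper leaves implicit.
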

\begin{proof}
	Let $c$ be a chamber in $\Delta$ and let $g\in\U_c$. It suffices to show that the local action $\sigma(g,\mathcal P)$ is a permutation in $F_i^+$ (since then the same holds for products of stabilisers). A proof follows by induction on the distance
	\[\dist(c,\P)=\min\{\dist(c,d)\mid d\in\P\}.\]
	If $\dist(c,\P)=0$, then $c\in\P$ and $\sigma(g,\P)\in F_i$ fixes $\lambda_i(c)$. If $\dist(c,\P)=n+1$, let $\P'$ be an adjacent $i$-panel at distance $n$ from $c$. Let $d'$ and $d$ be $j$-adjacent chambers in $\P$ and $\P'$ respectively, for some $j\neq i$. We know that $\lambda_i(d')=\lambda_i(d)$ and that
	\[\sigma(g,\P') \acts \lambda_i(d') = \lambda_i(g\acts d') = \lambda_i(g\acts d) = \sigma(g,\P) \acts \lambda_i(d),\]
	so that $\sigma(g,\P) = \sigma(g,\P')\cdot f$ for some permutation $f\in F_i$ stabilising the colour $\lambda_i(d)$. Since $\sigma(g,\P')\in F_i^+$ by the induction hypothesis, we find that indeed $\sigma(g,\P) \in F_i^+$.
\end{proof}

\section{Compact generation}\label{se:compgen}

In this section, we discuss some conditions on the local groups for a universal group~$\U$ to be compactly generated. Again, we make no assumptions regarding finiteness or transitivity of the local groups, but we do assume $\U$ to be locally compact. We are grateful to Pierre-Emmanual Caprace for discussing this problem with us and helping establish the rank 2 case, thereby suggesting a way to generalise to higher rank.

We will need the following result, taken from \cite{moller03}.
\begin{lemma}[good generating sets]
	\label{lem:goodgeneratingset}
	Let $G$ be a compactly generated totally disconnected locally compact group. Let $V$ be a compact open subgroup of $G$. Then there exists a finite set $T=\{\tau_1,\dots,\tau_n\}$, closed under inverses, such that $G=\langle T\rangle\cdot V$. The group $V$ together with the set $T$ is called a \emph{good generating set} of $G$.
\end{lemma}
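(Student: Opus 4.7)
The plan is to refine a symmetric compact generating set of $G$ into a finite family of coset representatives of $V$ enjoying a double-invariance property under $V$.

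First, I would pick a symmetric compact generating set $K=K^{-1}$ of $G$ containing the identity. Since $V$ is both compact and open, the product $VKV$ is compact (as the continuous image of $V\times K\times V$) and open (as a union of translates of $V$). Moreover $VKV$ is right-$V$-invariant, so it is a union of left cosets of $V$; by compactness only finitely many cosets occur, and I may write
\[VKV = t_1 V \sqcup \cdots \sqcup t_r V.\]
Set $T_0=\{t_1,\dots,t_r\}$ and $T=T_0\cup T_0^{-1}$, a finite set closed under inverses.

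The key observation is that $T_0V=VKV$ by construction, so
\[V\cdot T_0V = V^2 K V = VKV = T_0V,\]
which gives $VT_0\subseteq T_0V$. Taking inverses and using $V^{-1}=V$ yields $VT_0^{-1}\subseteq T_0^{-1}V$, hence $VT\subseteq TV$. A straightforward induction on word length in $T$ then upgrades this to $V\langle T\rangle \subseteq \langle T\rangle V$.

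Finally, put $H=\langle T\rangle V$. The containments
\[HH \subseteq \langle T\rangle V\langle T\rangle V \subseteq \langle T\rangle\langle T\rangle VV = \langle T\rangle V = H \qquad\text{and}\qquad H^{-1} = V\langle T\rangle \subseteq \langle T\rangle V = H\]
show that $H$ is a subgroup of $G$. Since $K\subseteq KV\subseteq VKV=T_0V\subseteq H$ and $K$ generates $G$, we conclude $H=G$, which gives the desired equality $G=\langle T\rangle\cdot V$. The only substantive idea is the choice to pass through $VKV$ rather than through $KV$: this provides the double-invariance $VT_0V=T_0V$ for free, after which the remainder reduces to routine bookkeeping, and I do not foresee any genuine obstacle.
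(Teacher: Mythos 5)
The paper gives no argument of its own for this lemma---it simply cites \cite[Lemma 2]{moller03}---so there is no in-paper proof to compare against; what you have written is essentially the standard argument and it is correct. There is, however, one imprecise step. From $VT_0 \subseteq T_0V$, ``taking inverses'' yields $(VT_0)^{-1} \subseteq (T_0V)^{-1}$, i.e.\ $T_0^{-1}V \subseteq VT_0^{-1}$, which is the \emph{reverse} of the inclusion you state. The fact you actually need is that $VT_0^{-1} \subseteq TV$, and this is cleanest to obtain from the symmetry of $VKV$ directly: since $K=K^{-1}$ and $V=V^{-1}$, the set $T_0V = VKV$ is closed under inverses, so $VT_0^{-1} = (T_0V)^{-1} = T_0V \subseteq TV$. (In particular $T_0^{-1}V \subseteq T_0 V$ as well, so $TV = T_0V$.) Combined with $VT_0 \subseteq T_0V$, this still gives $VT\subseteq TV$, and from there your induction to $V\langle T\rangle\subseteq\langle T\rangle V$, the verification that $H=\langle T\rangle V$ is a subgroup, and the conclusion $K\subseteq H$ hence $H=G$, all go through as written. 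So the argument is sound once that one sentence is repaired.
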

\begin{proof}
	See \cite[Lemma 2]{moller03}.
\end{proof}

In the following lemma, the fact that $\Delta$ is right-angled is crucial; the result does not at all hold for, say, general spherical buildings.
\begin{lemma}
	\label{lem:nicesubbuildings}
	Let $\lambda$ be a colouring of a right-angled building $\Delta$ using colour sets $X_i$. For each $i\in I$, let $Y_i\subseteq X_i$ be a subset of the $i$-colours such that $|Y_i|\geq 2$. Let $c_0\in\Delta$ be any chamber, and let $\Gamma$ be the set of chambers of $\Delta$ that are connected to $c_0$ by a gallery that only takes colours in the restricted sets $Y_i$. Then $\Gamma$ is a semiregular subbuilding with the same type as $\Delta$ and with parameters $q_i=|Y_i|$.
\end{lemma}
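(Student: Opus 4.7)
My strategy is to show that $\Gamma$, equipped with the chamber system structure inherited from $\Delta$, satisfies all the hypotheses to be a semiregular right-angled building of type $M$ with parameters $(|Y_i|)_{i\in I}$; the claim will then follow immediately from the Haglund--Paulin uniqueness theorem. The starting point is to identify $\Gamma$ set-theoretically with the connected component of $c_0$ inside $\tilde\Gamma := \{c\in\Delta \mid \lambda_i(c)\in Y_i\text{ for every }i\in I\}$, using the adjacency inherited from $\Delta$ restricted to $\tilde\Gamma$. Note that this description implicitly requires $\lambda_i(c_0)\in Y_i$ for every $i\in I$, which is the only non-degenerate case.

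Next I would verify the local structure of $\Gamma$. The $i$-panel of $\Delta$ through any $c\in\Gamma$ is parametrised bijectively by $X_i$ via $\lambda_i$, while every other colour is constant along the panel and already lies in the appropriate $Y_j$ since $c\in\tilde\Gamma$; hence the chambers of this panel with $\lambda_i$-value in $Y_i$ all lie in $\tilde\Gamma$, and being one $i$-step from $c$ they lie in $\Gamma$, yielding an $i$-panel of $\Gamma$ of size $|Y_i|$. For the rank-$2$ residues, when $m_{ij}=2$ the $\{i,j\}$-residue of $\Delta$ splits as a direct product $\mathcal P_i\times\mathcal P_j$ indexed by $X_i\times X_j$, so its intersection with $\Gamma$ is the subproduct indexed by $Y_i\times Y_j$ and is connected by single $i$- or $j$-moves. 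When $m_{ij}=\infty$ the residue is a semiregular tree, and the induced subgraph in $\tilde\Gamma$ has the correct local valencies, so the connected component through any $c\in\Gamma$ is a semiregular tree with parameters $(|Y_i|,|Y_j|)$ and coincides with the $\{i,j\}$-residue of $\Gamma$ through $c$.

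The final step bundles these observations via a closing squares argument. If $d_1\sim_i c\sim_j d_2$ lie in $\Gamma$ with $m_{ij}=2$, then the unique chamber $e\in\Delta$ from \cref{lem:closingsquares} satisfying $d_1\sim_j e\sim_i d_2$ has $\lambda_i(e)=\lambda_i(d_1)\in Y_i$, $\lambda_j(e)=\lambda_j(d_2)\in Y_j$, and all other colours equal to those of $c$, so $e$ belongs to $\Gamma$. Consequently every rank-$2$ Coxeter relation of the right-angled diagram $M$ is realised inside $\Gamma$, namely as a commutation $s_is_j=s_js_i$ via closing squares when $m_{ij}=2$ (no relation being required when $m_{ij}=\infty$). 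One can then define an isomorphism between $\Gamma$ and the Haglund--Paulin building of parameters $(|Y_i|)$ inductively along minimal galleries starting at $c_0$, with well-definedness guaranteed by the closing squares argument. The main obstacle is precisely this last bootstrapping from the local picture to a global building of type $M$; for a general Coxeter diagram it would require a delicate simply-$2$-connectedness check, but for right-angled $M$ the closing squares argument above supplies the only non-trivial rank-$2$ homotopies directly, making the induction go through.
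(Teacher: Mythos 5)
Your proposal takes a genuinely different route from the paper's, and, as you yourself flag, it has a real gap at the final step. The paper's argument is much shorter: by \cite[Proposition 7.18]{weiss2003} (or \cite[Theorem 4.66]{abramenko-brown}), a \emph{convex} chamber subsystem of a building is itself a building, so it suffices to show that $\Gamma$ is convex. The paper then takes any gallery in $\Gamma$, reduces it to a homotopic minimal gallery in $\Delta$ using the standard elementary operations (contractions $w\cdot ii\cdot w'\mapsto w\cdot w'$ or $w\cdot i\cdot w'$, and homotopies $ij\leftrightarrow ji$ when $m_{ij}=2$), and checks that each operation keeps the gallery inside $\Gamma$ -- the homotopy case being exactly the colour computation you perform via closing squares. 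Your local-structure verification (panels of size $|Y_i|$, rank-$2$ residues are products or semiregular trees) is correct, but it does not by itself establish that $\Gamma$ is a building of type $M$, and without that you cannot yet invoke Haglund--Paulin: their theorem classifies \emph{buildings}, so using it presupposes what is to be proved. To recognize $\Gamma$ as a building from purely local data one would need something like Tits' local approach to buildings, together with a verification of simple $2$-connectedness, and this is precisely what you label ``the main obstacle'' and then leave unaddressed. Proving well-definedness of your inductively constructed isomorphism is essentially equivalent to that simple-connectedness check; knowing that individual squares close in $\Gamma$ does not automatically show that any two minimal galleries in $\Gamma$ between the same chambers are homotopic \emph{inside $\Gamma$}. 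The cleanest repair is to replace your final step with the convexity argument above, at which point your closing-squares observation does exactly the right work.
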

\begin{proof}
	By \cite[Proposition 7.18]{weiss2003} or \cite[Theorem 4.66]{abramenko-brown}, it suffices to show that $\Gamma$ is convex. So consider an arbitrary gallery $\gamma$ in $\Gamma$ and a minimal gallery $\gamma'$ in $\Delta$ such that $\gamma$ and $\gamma'$ are homotopic; we will show that also $\gamma'$ is contained in $\Gamma$.
	By \cite{ronan}, we can reduce $\gamma$ to $\gamma'$ by elementary operations of two types, namely elementary contractions and elementary homotopies. We will show that applying such an operation on a gallery in $\Gamma$ results in a gallery that is again contained in $\Gamma$.
	\begin{enumerate}
		\item (\emph{elementary contractions}) If a gallery $\gamma$ in $\Gamma$ of type $w\cdot ii\cdot w'$ is contracted to a gallery $\gamma'$ of type $w\cdot w'$ or $w\cdot i\cdot w'$, then the chambers of $\gamma'$ form a subset of the chambers of $\gamma$, so $\gamma'$ is again a gallery in $\Gamma$;
		\item (\emph{elementary homotopies}) If a gallery in $\Gamma$ of type $ij$ with $m_{ij}=2$ is transformed into a gallery of type $ji$, then this new gallery is again contained in $\Gamma$. Indeed, whenever $c\sim_i d\sim_j c'$ with $c,d,c'\in\Gamma$ and $c\sim_j d'\sim_i c'$ with $d' \in \Delta$, we have $\lambda_i(d')=\lambda_i(c)$ and $\lambda_j(d')=\lambda_j(c')$, so that $d'\in\Gamma$.
			\[\begin{tikzpicture}[x=12mm,y=5mm]
				\node[myvertex,label={above:$c$}] (C) at (-1,0) {};
				\node[myvertex,label={above:$c'$}] (C') at (1,0) {};
				\node[myvertex,label={above:$d$}] (D) at (0,-1) {};
				\node[myvertex,label={above:$d'$}] (D') at (0,1) {};
				\draw[font=\scriptsize\strut,inner sep=0pt]
					(C) to node[below] {$i$} (D)
						to node[below] {$j$} (C')
					(C) to node[above] {$j$} (D')
						to node[above] {$i$} (C');
			\end{tikzpicture}\]
	\end{enumerate}
	This shows that $\Gamma$ is convex.
\end{proof}

\begin{lemma}
	\label{lem:convexhullfinite}
	Any finite set of chambers in a semiregular right-angled building $\Delta$ is contained in a finite convex set of chambers.
\end{lemma}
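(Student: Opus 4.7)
The strategy is to reduce, via \cref{lem:nicesubbuildings}, to the case of a locally finite subbuilding, and then to establish finiteness in that setting.

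Fix a chamber $c_0 \in \Phi$ and, for every other $c \in \Phi$, choose a minimal gallery from $c_0$ to $c$. Let $D$ be the (finite) union of all chambers occurring on these galleries. For each $i \in I$, set
\[
Y_i = \{\lambda_i(d) : d \in D\},
\]
enlarging $Y_i$ by an arbitrary extra $i$-colour if necessary so that $|Y_i| \geq 2$; then each $Y_i$ is finite. Applying \cref{lem:nicesubbuildings} with base chamber $c_0$ and colour sets $Y_i$ yields a semiregular right-angled subbuilding $\Gamma \subseteq \Delta$ with all parameters $|Y_i|$ finite. By construction, every chosen minimal gallery lies in $\Gamma$, so $\Phi \subseteq \Gamma$. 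Since $\Gamma$ is convex in $\Delta$, the convex hull of $\Phi$ computed in $\Delta$ coincides with the one computed in $\Gamma$, so it suffices to prove finiteness of the latter.

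In the locally finite building $\Gamma$, every ball of finite radius contains only finitely many chambers, so it is enough to bound the diameter of the convex hull of $\Phi$ inside $\Gamma$. For this, one exploits the parallelism structure of right-angled buildings (\cref{prop:parallelism}): the complement of any wing $W_e$ of a panel is convex, hence if no chamber of $\Phi$ lies in $W_e$, neither does any chamber of the convex hull. Consequently, two chambers of the convex hull can only be separated by a parallelism class of panels that already separates some pair of chambers in $\Phi$. Since each pair $a, b \in \Phi$ determines exactly $\dist(a, b)$ separating parallelism classes, their total number is finite, which yields the required diameter bound.

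The main obstacle is establishing the convexity of wing complements in right-angled buildings; this relies on the direct product decomposition of residues of type $k \cup k^\perp$ afforded by \cref{prop:parallelism} together with a careful analysis of projections. Granted this, the diameter bound combined with the local finiteness of $\Gamma$ confines the convex hull of $\Phi$ to a ball of finite radius, giving the desired finiteness.
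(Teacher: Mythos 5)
You follow the paper's first step — using \cref{lem:nicesubbuildings} to pass to a locally finite convex subbuilding $\Gamma$ containing $\Phi$ — but then diverge. The paper finishes by enclosing $\Phi$ in a finite ball $B$ of $\Gamma$ and asserting that $B$ is convex; that assertion is actually questionable, since chamber balls in right-angled Coxeter complexes need not be convex. For instance, in the right-angled Coxeter group on $\{s,t,u\}$ with $m_{st}=2$ and $m_{su}=m_{tu}=\infty$, both $us$ and $ut$ lie in the ball of radius $2$ about the identity chamber, yet the minimal gallery $us\sim_t ust\sim_s ut$ passes through $ust$, which is at distance~$3$. Your decision to instead bound the diameter of $\mathrm{conv}(\Phi)$ directly by a wall-crossing argument is therefore a genuinely different route, and one that sidesteps this issue.

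That said, your sketch is incomplete at precisely the point you flag. The fact that $\mathrm{conv}(\Phi)$ cannot straddle a parallelism class of panels that $\Phi$ itself does not straddle is equivalent to the convexity of wings (or of their complements), and ultimately rests on the fact that a minimal gallery crosses each parallelism class of panels at most once. This is a true and standard statement about right-angled buildings, essentially part of the wing machinery surrounding \cref{prop:parallelism} in \cite{caprace14}, but it is neither proved nor precisely cited here, so as written there is a genuine gap. Granting it, the rest of your argument does work: each pair $a,b\in\Phi$ is separated by $\dist(a,b)$ parallelism classes, so any two chambers of $\mathrm{conv}(\Phi)$ are separated by at most $\sum_{a,b\in\Phi}\dist(a,b)$ of them, giving a diameter bound; combined with the local finiteness of $\Gamma$, this makes $\mathrm{conv}(\Phi)$ a finite convex set, as required.
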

\begin{proof}
	First, we prove that any $n$ chambers $\{c_1,\dots,c_n\}$ are contained in a semiregular \emph{locally finite} subbuilding of $\Delta$. For $n=1$, this is obvious. For $n\geq 2$, we will make use of a legal colouring $\lambda$ of $\Delta$. Join~$c_1$ to every chamber in $\{c_2,\dots,c_n\}$ by an arbitrary minimal gallery. Let $Y_i\subseteq X_i$ be the subset of all $i$-colours that occur as the $i$-colour of some chamber on one of these newly added minimal galleries, and note that $Y_i$ is finite (for every $i\in I$). Let $\Gamma$ be the set of chambers of $\Delta$ that are connected to $c_1$ by a gallery that only takes colours in the sets $(Y_i)_{i\in I}$.
	
	By \cref{lem:nicesubbuildings}, $\Gamma$ is a locally finite subbuilding of $\Delta$ containing the chambers $\{c_1,\dots,c_n\}$. In $\Gamma$, these chambers can be enclosed by a \emph{finite} ball $B$ of finite radius. Moreover, as $B$ is convex in $\Gamma$ and $\Gamma$ is convex in $\Delta$, it follows that $B$ is a finite convex set in $\Delta$ containing $\{c_1,\dots,c_n\}$.
\end{proof}

\begin{definition}[minimal actions]
	Let $G$ act on a chamber system by automorphisms. We say that $G$ acts \emph{minimally} if there are no non-trivial $G$-invariant convex subsystems.
\end{definition}

Universal groups act minimally on their associated buildings, except in some very degenerate cases. This follows from the following lemma, together with \cref{cor:cobounded}. We thank the referee for proposing a generalisation of our original statement.

\begin{lemma}
\label{referee2}
	Let $\Delta$ be a right-angled building such that the diagram does not have isolated nodes. Let $G\leq\Aut(\Delta)$, and assume that the action of $G$ on $\Delta$ is cobounded. Then the action of $G$ on $\Delta$ is minimal.
\end{lemma}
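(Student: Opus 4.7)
I will argue by contradiction: suppose $\emptyset \neq \Omega \subsetneq \Delta$ is a convex $G$-invariant subsystem. The idea is to combine $G$-invariance with coboundedness in order to bound the function $\dist(\cdot,\Omega)$ uniformly on $\Delta$, and then, exploiting the absence of isolated nodes, to exhibit chambers in a suitable rank-$2$ residue whose distance to $\Omega$ exceeds this bound.

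First, by coboundedness fix a bounded subset $B \subseteq \Delta$ with $G \acts B = \Delta$. Since $\Omega$ is $G$-invariant, the function $\dist(\cdot,\Omega)$ is $G$-invariant, so every chamber of $\Delta$ has the same distance to $\Omega$ as some $G$-translate inside $B$. This yields $\dist(\cdot,\Omega) \leq D := \sup_{b\in B}\dist(b,\Omega)$, and $D$ is finite because $B$ has bounded diameter and $\Omega$ is non-empty in the connected chamber system $\Delta$.

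Next, since $\Omega$ is proper and non-empty, walking along any gallery from a chamber of $\Omega$ to one outside $\Omega$ produces $i$-adjacent chambers $c \in \Omega$ and $d \notin \Omega$ for some $i \in I$. Let $\P$ be the $i$-panel spanned by $c,d$ and put $W := \proj_\P^{-1}(d)$. The gate property gives $\dist(e,c) = \dist(e,d)+1$ for every $e \in W$, so convexity of $\Omega$ forces $W \cap \Omega = \emptyset$. By the no-isolated-nodes hypothesis, choose $j \neq i$ with $m_{ij} = \infty$; the $\{i,j\}$-residue $\R$ through $c$ is the chamber system of an infinite tree in which $\P$ appears as the star at the type-$i$ vertex shared by $c$ and $d$, and $W \cap \R$ is exactly the branch through $d$. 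In particular, for each $N \geq 0$ there is a chamber $e_N \in W \cap \R$ at distance $N$ from $d$.

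It remains to verify that $\dist(e_N,\omega) \geq N+1$ for every $\omega \in \Omega$; choosing $N \geq D$ then contradicts $\dist(e_N,\Omega) \leq D$. Set $\omega_\R := \proj_\R(\omega)$; composition of projections onto nested convex sets gives $\proj_\P(\omega_\R) = \proj_\P(\omega) \neq d$, because $\omega \in \Omega \subseteq \Delta \setminus W$. Hence $\omega_\R$ lies in a branch of the tree $\R$ different from the $d$-branch, so the unique path in $\R$ from $e_N$ to $\omega_\R$ passes through $d$, giving $\dist(e_N,\omega_\R) \geq N+1$; the gate property for the convex residue $\R$ then yields $\dist(e_N,\omega) = \dist(e_N,\omega_\R) + \dist(\omega_\R,\omega) \geq N+1$. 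The only subtle point is precisely this last step: one must deal with chambers $\omega \in \Omega$ that lie outside $\R$, and the key observation is that $\omega_\R$ cannot fall inside $W \cap \R$, for otherwise $\omega$ itself would sit in the forbidden wing $W$.
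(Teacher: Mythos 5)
Your proof is correct, and it takes a genuinely different (and arguably cleaner) route than the paper's argument. The paper also starts from a boundary edge $c\sim_i d$ with $c\in\Gamma$, $d\notin\Gamma$, and also moves into the infinite tree residue of type $\{i,j\}$, but its strategy is \emph{local}: walk along an apartment of that residue to a chamber $d'$ with $\dist(c,d')=R+1$ (where $R$ is the coboundedness radius), choose $c'\in G\acts c$ with $\dist(c',d')\leq R$, concatenate and reduce to a minimal gallery from $c$ to $c'$, and then argue by a delicate length comparison that the initial segment $c\sim_i d$ survives the reduction --- forcing $d\in\Gamma$ by convexity. Your strategy is \emph{global}: first observe that $\dist(\cdot,\Omega)$ is $G$-invariant and therefore uniformly bounded by some $D$ via coboundedness, then use the gate property to show that the entire wing $W=\proj_\P^{-1}(d)$ is disjoint from $\Omega$, and finally walk arbitrarily far into the branch of $d$ inside the tree residue to produce a chamber at distance $>D$ from $\Omega$. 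The paper's approach avoids invoking projections and wings, at the cost of the gallery-reduction step whose justification (``by comparing the lengths of $\gamma_1$ and $\gamma_2$'') is somewhat terse; your approach substitutes the composition rule $\proj_\P\circ\proj_\R=\proj_\P$ and the gate property of residues, which makes the distance estimate $\dist(e_N,\omega)\geq N+1$ transparent and eliminates any combinatorics of gallery reductions. Both proofs rely essentially on the same two hypotheses in the same way: coboundedness supplies the finite radius, and the absence of isolated nodes supplies the infinite tree residue in which to escape.
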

\begin{proof}
	Suppose that $\U$ stabilises some non-empty convex subsystem $\Gamma$ of $\Delta$. Let $c\sim_i d$ be such that $c\in\Gamma$ but $d\notin\Gamma$. Since the action is cobounded, there exists some constant $R$ such that every chamber lies within distance $R$ of some chamber in the orbit $G\acts c$.
	
	Using the absence of isolated nodes, take $j\in I$ such that $m_{ij}=\infty$ and let $\mathcal A$ be an apartment of type $\{i,j\}$ containing $c$ and $d$. Note that $\mathcal A$ is a bi-infinite ray. Let $d'$ be the chamber on $\mathcal A$ such that $\dist(c,d')=\dist(d,d')+1=R+1$. Next, find $c'\in G\acts c$ such that $\dist(c',d')\leq R$. We now have a minimal gallery $\gamma_1$ from $c$ to $d'$ of length $R+1$ and a minimal gallery $\gamma_2$ from $d'$ to $c'$ of length at most $R$.
	
	We can convert the concatenation $\gamma_1\gamma_2$ to a minimal gallery using only elementary homotopies, contractions, and operations of the form $ii\mapsto i$ on its type. In doing so, we obtain a minimal gallery $\gamma$ from $c$ to $c'$ whose initial segment $c\sim_i d$ remains unchanged (by comparing the lengths of $\gamma_1$ and $\gamma_2$).
	
	Thus, $\gamma$ is a minimal gallery joining $c\in\Gamma$ to $c'\in G\acts c\subseteq G\acts\Gamma=\Gamma$ and containing $d$, so that $d\in\Gamma$ by convexity. This contradiction shows that $\Gamma=\Delta$.
\end{proof}
\begin{corollary}
	\label{lem:uminimal}
	Let $\Delta$ be a right-angled building such that the diagram does not have isolated nodes. Then $\U_\Delta^\lambda(\F)$ acts minimally on $\Delta$.

\end{corollary}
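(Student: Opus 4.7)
The plan is to combine two results that have already been established earlier in the section. The corollary asserts minimality of $\U_\Delta^\lambda(\F)$ acting on $\Delta$ under the assumption that the diagram has no isolated nodes, and \cref{referee2} already does the geometric heavy lifting: it shows that any subgroup of $\Aut(\Delta)$ which acts coboundedly must act minimally, precisely under this hypothesis on the diagram. So the only thing left to verify is that the universal group acts coboundedly.

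First I would invoke \cref{cor:cobounded}, which states that the action of $\U(\F)$ on $\Delta$ is always cobounded (and in fact that every ball of radius $I$ around a fixed chamber meets every orbit). Note that this result does not require any assumption on the diagram, and in particular no transitivity assumption on the local groups $F_i$, since it is proved by comparing $\U(\F)$ to the universal group $\U(\mathbf{Id})$ attached to the trivial local data.

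Then I would apply \cref{referee2} to $G = \U_\Delta^\lambda(\F) \leq \Aut(\Delta)$: since the diagram of $\Delta$ has no isolated nodes by hypothesis, and $G$ acts coboundedly by the previous step, the lemma immediately yields that $G$ acts minimally on $\Delta$. There is no real obstacle here; the corollary is simply the specialisation of \cref{referee2} to the case of universal groups, made possible by the coboundedness established in \cref{cor:cobounded}. Concretely, the proof reads: by \cref{cor:cobounded}, the action of $\U_\Delta^\lambda(\F)$ on $\Delta$ is cobounded, so \cref{referee2} applies and yields the desired minimality.
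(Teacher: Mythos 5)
Your proof is correct and is exactly the paper's intended argument: the paper introduces \cref{referee2} precisely as the lemma from which \cref{lem:uminimal} follows, citing \cref{cor:cobounded} for the coboundedness hypothesis. Nothing more is needed.
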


\begin{theorem}
	\label{thm:compgen1}
	Assume that $\U(\F)$ is closed, locally compact, and compactly generated. Then for each $i\in I$, the local group $F_i$ has finitely many orbits.
\end{theorem}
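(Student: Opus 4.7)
My plan is to argue by contradiction: suppose some $F_i$ has infinitely many orbits on $X_i$, and deduce that $\U(\F)$ cannot be compactly generated. Assuming $\U$ is compactly generated, \cref{lem:goodgeneratingset} together with \cref{prop:locallycompact} and \cref{lem:convexhullfinite} furnishes a good generating set $(T, V)$, where $V = \U_{(\Phi)}$ for some finite convex set of chambers $\Phi$, $T = \{\tau_1, \ldots, \tau_n\}$ is finite and symmetric, and $\U = \langle T \rangle V$.

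Next I form the enlarged set $\Psi := \Phi \cup \bigcup_{\tau \in T} \tau(\Phi)$ and its convex hull $\hat\Psi$, which remains finite by \cref{lem:convexhullfinite}. The key claim is that $\U \cdot \hat\Psi = \Delta$, that is, every chamber of $\Delta$ lies in the $\U$-orbit of some chamber of $\hat\Psi$. Granted this, the number of $\U$-orbits is at most $|\hat\Psi| < \infty$; combined with the corollary following \cref{lem:orbits}, which identifies this count with $\prod_i |X_i / F_i|$, each factor must then be finite, contradicting our assumption.

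To prove the orbit-covering claim, I would show that $\U \cdot \hat\Psi$ is a non-empty $\U$-invariant convex subsystem of $\Delta$; then the minimality of the $\U$-action (\cref{lem:uminimal}, assuming the diagram has no isolated nodes) forces $\U \cdot \hat\Psi = \Delta$. Non-emptiness and $\U$-invariance are automatic, so the delicate point is convexity. Here I would induct on the length of a minimal gallery between two chambers of $\U \cdot \hat\Psi$, using the convexity of $\hat\Psi$ itself for the base step, and for the inductive step exploiting both \cref{lem:closingsquares} and the fact that $\hat\Psi$ already absorbs all one-step $T$-translations of $\Phi$.

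The main obstacle is precisely this convexity step, because generic compactly generated totally disconnected locally compact groups acting on discrete sets may well have infinitely many orbits -- so the argument must genuinely rely on the combinatorial rigidity of right-angled buildings, particularly the commutativity relations between adjacency types encoded in the diagram, together with the way $V$ pointwise stabilises $\Phi$. Should the direct convexity approach prove elusive, an alternative is to argue that any chamber outside $\U \cdot \hat\Psi$ would witness a proper convex $\U$-invariant subsystem strictly smaller than $\Delta$, directly contradicting the minimality of the action.
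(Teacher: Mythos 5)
The overall strategy---good generating set, finite convex set via \cref{lem:convexhullfinite}, and minimality of the action---is exactly the right scaffolding, and it matches the paper's outline. However, the central claim that $\U \cdot \hat\Psi$ is \emph{convex} is a genuine gap, and as far as I can see it is simply false in general. The union of $\U$-translates of a finite convex set has no reason to be convex: if some $F_j$ has infinitely many orbits, then $\U \cdot \hat\Psi$ only contains chambers whose $j$-colours lie in finitely many $F_j$-orbits, while a minimal gallery between two chambers of $\U \cdot \hat\Psi$ can easily pass through chambers whose $j$-colours lie in completely different orbits. \Cref{lem:closingsquares} operates locally on spherical residues and gives you no leverage to rule out such detours, and ``$\hat\Psi$ absorbs one-step $T$-translations of $\Phi$'' controls which $\U$-translates of $\hat\Psi$ overlap, not what happens on galleries between distant translates. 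Your fallback (``any chamber outside $\U \cdot \hat\Psi$ would witness a proper convex $\U$-invariant subsystem'') is also unsubstantiated, since the mere existence of an outside chamber does not hand you a convex invariant subsystem.

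The paper resolves exactly this difficulty by asking for much less: it proves only that $\U \cdot B$ is \emph{connected} (an easy induction on word length in $T$, using that $B$ contains $c$ together with all $\tau_i \cdot c$), not convex. Then, supposing some orbit $Y$ of $F_j$ is not represented among the $j$-colours appearing in $B$, it calls a chamber ``neglected'' if its $j$-colour lies in $Y$. By \cref{lem:orbits} no chamber of $\U \cdot B$ is neglected either. The set $\Gamma$ of chambers reachable from (the connected set) $\U \cdot B$ by galleries avoiding neglected chambers is then a semiregular \emph{subbuilding} by \cref{lem:nicesubbuildings}, hence convex; it is visibly $\U$-invariant and proper. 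This contradicts \cref{lem:uminimal}. So the convex $\U$-invariant object used to derive the contradiction is the subbuilding $\Gamma$ manufactured by \cref{lem:nicesubbuildings}, not the orbit $\U \cdot B$ itself. You are missing this use of \cref{lem:nicesubbuildings}, which is what supplies the convexity you cannot get directly.
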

\begin{proof}
	Pick any chamber $c\in\Delta$. Consider the chamber stabiliser $\U_c$ which is a compact open subgroup by \cref{prop:locallycompact}. Let $T=\{\tau_1,\dots,\tau_n\}$ be a finite set as in \cref{lem:goodgeneratingset}, so that $T$ together with $\U_c$ is a good generating set. By \cref{lem:convexhullfinite}, we can find a finite convex set $B \subseteq \Delta$ containing $\{c,\tau_1\acts c,\dots,\tau_n\acts c\}$.
	
	First, we claim that the set $\U\acts B$ is connected. Let $g \in \U$ and $d \in B$ be arbitrary; we will show that $g \acts d$ is connected to $c$ by some gallery contained in $\U\acts B$.  Write $g\in\U=\langle T\rangle\cdot\U_c$ as $g=t_k\dotsm t_1\cdot s$ (with $t_1,\dots,t_k\in T$, $s\in\U_c$); we will use induction on~$k$. For $k=0$, we simply have that $g\in\U_c$ so that $g\acts d$ is connected to $g\acts c=c$ by a gallery in $g\acts B$. For $k\geq 1$, let $g=t_k\cdot g'$. By the induction hypothesis, $g'\acts d$ is connected to $c$ by a gallery in $\U\acts B$. It follows that $g\acts d$ is connected to $t_k\acts c$ by a gallery in $\U\acts B$, and we can concatenate this gallery with one from $t_k\acts c$ to $c$ contained in $B$. We conclude that the set $\U\acts B$ is indeed connected.
	
	Next, we claim that each orbit of $F_i$ has a ``representative chamber'' in $B$, or more precisely, we claim that for each $i\in I$, the map
	\[\tilde\lambda_i\colon B\to X_i/F_i\colon b\mapsto F_i\acts\lambda_i(b)\]
	is surjective. In order to show this, suppose that some orbit $Y$ of $F_j$ is not in the image of $\tilde\lambda_j$ for some $j\in I$. Call a chamber \emph{neglected} if its $j$-colour is contained in the orbit $Y$. By assumption, no chamber in $B$ is neglected, and \cref{lem:orbits} implies that no chamber in $\U\acts B$ is neglected. Let $\Gamma$ be the set of all chambers of $\Delta$ that are connected to $\U \acts B$ by galleries that do not pass through neglected chambers.
	Then by \cref{lem:nicesubbuildings}, $\Gamma$ is a semiregular subbuilding of $\Delta$ and by construction, $\Gamma$ is $\U$-invariant. This contradicts the minimality of the action of $\U$ (see \cref{lem:uminimal}).
	
	We conclude that for each $i\in I$, we have a surjective map $\tilde\lambda_i$ from a finite set~$B$ onto the set of orbits of $i$-colours. Hence the local groups $F_i$ have finitely many orbits.
\end{proof}

\begin{theorem}
	\label{thm:compgen2}
	Assume that $\U(\F)$ is closed and locally compact. Moreover, assume that for each $i\in I$, the group $F_i$ is compactly generated and has finitely many orbits. Then the universal group $\U(\F)$ is compactly generated.
\end{theorem}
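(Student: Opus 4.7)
The strategy is to construct an explicit compact generating set for $\U(\F)$. Fix a chamber $c_0 \in \Delta$; by \cref{prop:locallycompact}, the stabiliser $V := \U_{c_0}$ is a compact open subgroup. Since each $F_i$ has finitely many orbits, \cref{lem:orbits} implies that $\U$ has finitely many orbits $O_0, \ldots, O_{m-1}$ on chambers of $\Delta$. By \cref{cor:cobounded}, we may pick representatives $c_0 = d_0, d_1, \ldots, d_{m-1}$ of these orbits all lying in the ball of radius $|I|$ around $c_0$, and by \cref{lem:convexhullfinite} we may enclose them in a finite convex set $B \subseteq \Delta$. Set $V' := \U_{(B)}$, which is still a compact open subgroup contained in $\U_d$ for every $d \in B$.

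For each chamber $d \in B$ and each $i \in I$, let $\P$ denote the $i$-panel through $d$. By \cref{prop:stabcompgen} (using that $F_i$ is compactly generated), the stabiliser $\U_{\{\P\}}$ is compactly generated, so by \cref{lem:goodgeneratingset} we can choose a finite symmetric set $T_{i,d}$ such that $\U_{\{\P\}} = \langle T_{i,d}\rangle \cdot \U_d$. Moreover, $\U_d$ acts on the finite set $B$ with kernel $V'$, so $\U_d / V'$ embeds into $\Sym(B)$ and is finite; pick a finite transversal $S_d \subseteq \U_d$. Let $T := \bigcup_{d,i} T_{i,d} \cup \bigcup_{d} S_d$, a finite set, and define $H := \langle V' \cup T\rangle$, which is generated by the compact set $V' \cup T$.

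The crux of the proof is to show that $H = \U(\F)$. For any $g \in \U$, it suffices to find $h \in H$ with $h \acts c_0 = g \acts c_0$, because then $h^{-1}g \in V \subseteq H$. To produce such an $h$, I would use a gallery-tracking induction: take a minimal gallery $c_0 = e_0 \sim_{j_1} \cdots \sim_{j_N} e_N = g \acts c_0$ in $\Delta$ and inductively build elements of $H$ that realise $g$'s effect one adjacency at a time. At each $j_\ell$-step, the required local permutation comes from $T_{j_\ell, d}$ for an appropriate orbit representative $d \in B$, transported via conjugation by the partial $H$-element already constructed, with the transversal $S_d$ supplying any necessary correction in the chamber stabiliser.

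The principal obstacle is that the orbit $\U \acts c_0$ need not be gallery-connected as a sub-chamber-system of $\Delta$: a gallery from $c_0$ to $g \acts c_0$ may traverse chambers in other $\U$-orbits, so the inductive step cannot simply operate ``within'' one orbit. The finite convex hull $B$ is designed exactly to cope with this, since it contains representatives of every orbit: whichever orbit an intermediate chamber lies in, the local permutation needed at that step can be assembled from the generators $T_{j,d}, S_d$ at the corresponding $d \in B$ together with $V'$-conjugates. This orchestration of generators across all orbits, made possible by \cref{lem:convexhullfinite} and the cocompactness provided by \cref{cor:cobounded}, is what makes the inductive construction succeed and yields $H = \U(\F)$.
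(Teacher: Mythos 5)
Your high-level strategy matches the paper's: build an explicit compact generating set from a finite configuration near a base chamber (stabilisers, panel-stabiliser generating sets, transporters between orbit representatives), then show by an induction over $\Delta$ that this set generates all of $\U(\F)$. However, the proof as written has a genuine gap precisely at the place where all the mathematical content lives.

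The reduction ``it suffices to find $h\in H$ with $h\acts c_0 = g\acts c_0$'' is fine. But the subsequent ``gallery-tracking induction'' is not a proof: you never state the inductive invariant, and the claim that at each step ``the local permutation needed can be assembled from the generators $T_{j,d},S_d$ at the corresponding $d\in B$ together with $V'$-conjugates'' is exactly the assertion that needs proving, not a reason. This is where the real work is. The paper's argument, in contrast, has a precise intermediate goal (``every chamber lies in $\langle S,T\rangle\acts B$''), proved by a delicate three-case induction on $\dist(c,d)$ that crucially exploits (a) a set $B$ that is confined to a ball of \emph{controlled} radius around $c$, (b) an auxiliary set $D$ one step \emph{beyond} that ball, and (c) a finite set of explicit transporters $t_{(b,d)}$ between equally-coloured chambers of $B\cup D$. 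Your $B$ (an arbitrary finite convex hull of orbit representatives) has no diameter control, your generating set contains no transporters at all, and there is no analogue of $D$. Without these, the key step is genuinely stuck: the naive inductive hypothesis $e_\ell\in H\acts c_0$ fails because the intermediate $e_\ell$ can lie in a different $\U$-orbit, and the weaker hypothesis $e_\ell\in H\acts B$ does not close up either, because a chamber $j$-adjacent to some $b\in B$ need not be reachable from $B\cap\P$ by $\U_{\{\P\}}$ when the $j$-colours lie in different $F_j$-orbits --- the situation the transporters are designed to handle. You would also need the final gluing argument (the paper's Step 4): obtaining $g\acts c_0\in H\acts B$ only gives $g\acts c_0 = h\acts b$ with $b\in B$ in the same $\U$-orbit as $c_0$, and getting from there to $g\acts c_0\in H\acts c_0$ again requires an explicit transporter $c_0\mapsto b$, which your generating set does not obviously supply.

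There is also a small factual error: $\U_d$ does not act on the finite set $B$, since $\U_d$ stabilises the chamber $d$ but need not stabilise a convex hull containing it. The conclusion you want (that $\U_d/V'$ is finite and a transversal $S_d$ exists) is true, but for the different reason that $\U_d$ is compact and $V'=\U_{(B)}$ is an open subgroup of it.

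In summary: the plan is the right one and the ingredients you gather (local compactness, \cref{lem:convexhullfinite}, \cref{prop:stabcompgen}, \cref{lem:goodgeneratingset}) are the right ones, but the central induction --- which in the paper occupies three carefully structured steps with a geometric bound on the ball $\widetilde B$ and a buffer layer $D$ --- is replaced by a single sentence that merely asserts the conclusion. That is a missing proof, not a shortened one.
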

\begin{proof}
	\setcounter{stepcounter}{0}
	We will construct an explicit compact generating set $Q$. For each $i \in I$, we choose a transversal $\mathcal T_i$ for the action of $F_i$ on the colours $X_i$. By assumption, all $\mathcal T_i$ are finite. Let $c\in\Delta$ be a chamber such that $\lambda_i(c)\in\mathcal T_i$ for all $i\in I$. Define the ball
 	\[\widetilde B = \{d\in\Delta \mid \text{$\dist(c,d) \leq |I|$}\}\]
	and the finite subset
	\[B = \{d\in\widetilde B \mid \text{$\lambda_i(d)\in\mathcal T_i$ for all $i\in I$}\}.\]
	Note that $\U\acts B=\Delta$ by \cref{lem:orbits} because each possible colour combination from colours in the sets $\mathcal T_i$ occurs inside $B$. Next, we define
	\[\widetilde D = \{d\in\Delta \mid \text{$\dist(c,d) = |I|+1$ and $d$ is adjacent to a chamber in $B$}\}\]
	and the finite subset
	\[D = \{d\in\widetilde D \mid \text{$\lambda_i(d)\in\mathcal T_i$ for all $i\in I$}\}.\]
	For each pair $(b,d)\in B\times (B\cup D)$ of equally coloured chambers, pick an element $t_{(b,d)}$ in $\U$ that maps $b$ to $d$. Let $T$ be the finite set of all such elements; note that $D\subseteq T\acts B$.
	
	By assumption, all local groups are compactly generated, hence by \cref{prop:stabcompgen}, all setwise panel stabilisers are compactly generated as well. For each panel $\P$, let $S_\P$ be a compact generating set for $\U_{\{\P\}}$; then define the compact set
	\[S=\bigcup  \{S_\P \mid \text{$\P$ is a panel containing a chamber in $B$}\}.\]

	Finally, let $Q={\U_c}\cup S\cup T$ and observe that $Q$ is compact. We claim that $\langle Q\rangle=\U$. In order to prove this claim, we proceed in four steps.

	\begin{proofstep}
		For each chamber $d\in\widetilde B$, we have $d\in\langle S\rangle\acts B$.
	\end{proofstep}
	We show this by induction on $\dist(c,d)$. If $d=c$ there is nothing to show. Otherwise, let $d'$ and $i$ be such that $d'\sim_i d$ and $\dist(c,d')=\dist(c,d)-1$. Since $d'\in\widetilde B$, we can write $d'=h\acts b'$ for some $h\in\langle S\rangle$ and $b'\in B$. Now let $\P$ be the $i$-panel that contains~$b'$ and let $b$ be the unique chamber of $\P$ such that $\lambda_i(b)$ is the representative of the orbit of $\lambda_i(h^{-1}\acts d)$. Then there is some $s\in \U_{\{\P\}}$ such that $s\acts b = h^{-1}\acts d$, and we conclude that indeed $d=hs\acts b\in\langle S\rangle\acts B$.

	\begin{proofstep}
		For each chamber $d\in\widetilde D$, we have $d\in\langle S,T\rangle\acts B$.
	\end{proofstep}
	By definition of $\widetilde D$, the chamber $d$ is $i$-adjacent to some chamber in $B$, for some $i$. Let $\P$ be the $i$-panel containing $d$. Let $d'$ be the unique chamber in $\P$ such that $\lambda_i(d')$ is the representative of the orbit of $\lambda_i(d)$. Then there are two possible cases. If $d'\in B$, we can immediately set $b=d'$ and $t=1$. Otherwise $d'\in D$, and we can find some $b\in B$ and $t\in T$ sending $b$ to $d'$. Moreover, there exists some $s\in \U_{\{\P\}}$ sending $d'$ to $d$, and we conclude that $d=st\acts b\in\langle S,T\rangle\acts B$.

	\begin{proofstep}
		For each chamber $d\in\Delta$, we have $d\in\langle S,T\rangle\acts B$.
	\end{proofstep}
	We again use induction on $\dist(c,d)$. If $d=c$, there is nothing to show. Else, let $d'$ be a chamber such that $d'\sim_i d$ and $\dist(c,d')=\dist(c,d)-1$. By induction, $d'=h\acts b$ for some $h\in\langle S,T\rangle$ and some $b\in B$. Notice that $\dist(d, h \acts c) \leq |I|+1$. There are three cases to consider.
	\begin{enumerate}
		\item If $d\in h\acts B$, then there is nothing to prove.

		\item If $d\notin h\acts B$ but $d\in h\acts\widetilde B$, then $h^{-1}\acts d\in\widetilde B$. From Step 1, we know that $h^{-1}\acts d\in\langle S\rangle\acts B$, so that $d\in h\langle S\rangle\acts B$.

		\item If $d\notin h\acts\widetilde B$, then $\dist(d,h\acts c)=|I|+1$. Since $h^{-1}\acts d$ is adjacent to $b\in B$, we have $h^{-1}\acts d\in\widetilde D$. From Step 2, we know that $h^{-1}\acts d\in\langle S,T\rangle\acts B$, so that $d\in h\langle S,T\rangle\acts B$.
	\end{enumerate}
	In each case, we find that $d\in\langle S,T\rangle\acts B$ and we conclude that $\langle S,T\rangle\acts B=\Delta$.
	
	\begin{proofstep}
		$\langle Q\rangle = \langle S,T,\U_c\rangle = \U$.
	\end{proofstep}
	Let $g\in\U$ be arbitrary. From Step 3, we know that $g\acts c=h\acts b$ for some $h\in\langle S,T\rangle$ and some $b\in B$. Note that the chambers $b$ and $c$ both lie in the same orbit of $\U$ and in~$B$, so by \cref{lem:orbits}, they have the same colours. By definition of $T$, there is an element $t\in T$ mapping $c$ to $b$. Putting everything together, we see that $ht\acts c = h \acts b =g\acts c$, from which it follows that $g\in ht\cdot\U_c$. In particular, $g\in\langle Q\rangle$.
\end{proof}

Combining \cref{lem:orbits}, \cref{thm:compgen1}, and \cref{thm:compgen2}, we obtain as a corollary:
\begin{corollary}
	Assume that $\U(\F)$ is closed and locally compact. Moreover assume that $F_i$ is compactly generated for each $i\in I$. Then the following are equivalent:
	\begin{itemize}
		\item $\U(\F)$ is compactly generated;
		\item $\U(\F)$ has finitely many orbits on $\Delta$;
		\item $F_i$ has finitely many orbits for each $i$.
	\end{itemize}
\end{corollary}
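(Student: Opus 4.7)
The plan is to combine the three cited results into a cycle of implications. First I would establish the equivalence between the second and third bullets using the corollary to \cref{lem:orbits}, which gives the explicit count
\[\lvert \Delta / \U(\F) \rvert = \prod_{i\in I} \lvert X_i / F_i \rvert.\]
Since $I$ is finite by the standing assumption on buildings, and every factor on the right is a positive cardinal, this product is finite if and only if every factor is finite, i.e.~exactly when each local group $F_i$ has finitely many orbits on $X_i$.

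The remaining two implications are then immediate. For the direction from compact generation of $\U(\F)$ to each $F_i$ having finitely many orbits, I would apply \cref{thm:compgen1} verbatim, whose hypotheses (closedness, local compactness, and compact generation of $\U(\F)$) are precisely the assumptions of the corollary together with the first bullet. Conversely, for the direction from finite orbits of each $F_i$ to compact generation of $\U(\F)$, I would apply \cref{thm:compgen2}: its hypotheses require closedness and local compactness of $\U(\F)$ (given), compact generation of each $F_i$ (explicitly assumed in the corollary), and finiteness of the number of orbits of each $F_i$ (the third bullet, now in hand).

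There is no real obstacle here; the corollary is a bookkeeping consequence of the substantive results \cref{thm:compgen1} and \cref{thm:compgen2} together with the orbit-counting formula of \cref{lem:orbits}. The only subtle point worth making explicit is that the finiteness of $I$ is essential for the equivalence between the second and third bullets, since for an infinite index set a product of finite positive cardinals need not be finite.
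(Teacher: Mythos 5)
Your proposal is correct and follows exactly the route the paper intends: the equivalence of the last two bullets comes from the orbit-counting formula in the corollary to \cref{lem:orbits} (using finiteness of $I$), while the two remaining implications are direct applications of \cref{thm:compgen1} and \cref{thm:compgen2}. This matches the paper, which introduces the corollary precisely as ``Combining \cref{lem:orbits}, \cref{thm:compgen1}, and \cref{thm:compgen2}.''
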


The natural question is now whether the converse to \cref{thm:compgen2} holds as well. We established half of the converse in \cref{thm:compgen1}, but we have not yet found a complete and general proof.
\begin{conjecture}
	\label{theconjecture}
	Assume that each local group $F_i$ is closed, that $\U(\F)$ is locally compact, and that $\U(\F)$ is compactly generated. Does it then follow that each $F_i$ is compactly generated?
\end{conjecture}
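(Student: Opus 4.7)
The natural reduction is via \cref{prop:stabcompgen}: it suffices to prove that the setwise stabiliser $\U_{\{\P\}}$ of some $j$-panel $\P$ is compactly generated, for each $j\in I$. Fix a chamber $c\in\P$ and set $V:=\U_c$, a compact open subgroup of $\U$ by \cref{prop:locallycompact} that is contained in $\U_{\{\P\}}$. Applying \cref{lem:goodgeneratingset} yields a finite symmetric $T\subseteq\U$ with $\U=\langle T\rangle V$.

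My plan is to show that $F_j$ itself is compactly generated, via the continuous epimorphism $\sigma_{\lambda,\P}\colon\U_{\{\P\}}\twoheadrightarrow F_j$ of \cref{referee1}, and then lift back through \cref{prop:stabcompgen}. Writing each $g\in\U_{\{\P\}}$ as $g=t_1\cdots t_k v$ with $t_i\in T$ and $v\in V$, the cocycle identity $\sigma_\lambda(gh,\P)=\sigma_\lambda(g,h\acts\P)\cdot\sigma_\lambda(h,\P)$ expands to
\[\sigma_\lambda(g,\P) = \sigma_\lambda(t_1,\P_1)\cdot\sigma_\lambda(t_2,\P_2)\cdots\sigma_\lambda(t_k,\P_k)\cdot\sigma_\lambda(v,\P),\]
with $\P_i:=t_{i+1}\cdots t_k v\acts\P\in\U\acts\P$. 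The last factor lies in the point stabiliser $(F_j)_{\lambda_j(c)}$, which is compact by \cref{prop:locallycompact}, while each other factor has the form $\sigma_\lambda(t,\P')$ for some $t\in T$ and some $\P'\in\U\acts\P$. Since $\sigma_{\lambda,\P}\colon\U_{\{\P\}}\to F_j$ is surjective (\cref{lem:extendlocaluniversal}), $F_j$ is generated as an abstract group by $(F_j)_{\lambda_j(c)}$ together with the set
\[S := \{\sigma_\lambda(t,\P') : t\in T,\ \P'\in\U\acts\P\}\subseteq F_j,\]
so the conjecture reduces to proving that $S$ is relatively compact in $F_j$ (equivalently, in $\Sym(X_j)$, as $F_j$ is assumed closed).

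The main geometric input available for this compactness is \cref{lem:parallellocals}: $\sigma_\lambda(t,\P')$ depends only on the parallelism class of $\P'$, i.e.\ on the $(j\cup j^\perp)$-residue containing it (\cref{prop:parallelism}). However the orbit $\U\acts\P$ typically meets infinitely many such residues, so parallelism alone does not confine $S$ to a finite subset of $F_j$. To close the argument one would need either a finer invariant controlling how $\sigma_\lambda(t,\P')$ varies across parallelism classes, or a structural statement --- perhaps via \cref{lem:closingsquares} combined with the cobounded minimal action (\cref{cor:cobounded,lem:uminimal}) --- expressing each $\sigma_\lambda(t,\P')$, modulo $(F_j)_{\lambda_j(c)}$, in terms of a finite ``seed'' of representative local actions that $T$ can reach from $\P$.

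The main obstacle, and in my view the reason the statement is only conjectural, is exactly producing such relative compactness of $S$. When $j^\perp$ is large, parallelism is plentiful and relative compactness of $S$ seems plausible; but when $j^\perp=\emptyset$ --- as for purely tree-like factors --- parallelism is trivial, and each $\P'\in\U\acts\P$ can a priori contribute a fresh local action, so an altogether different mechanism is required. A natural route in that regime would be an inductive reduction, via \cref{lem:nicesubbuildings}, to the rank-$2$ tree case, for which Smith's framework in \cite{smith2017} appears to be the right guide.
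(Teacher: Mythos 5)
This statement is a conjecture, left open in the paper; the paper only supplies partial results (\cref{thm:compgen3}, under the extra hypothesis that each $F_i^+$ is compactly generated, and \cref{prop:compgenfortrees}, handling the rank-$2$ case via Bass--Serre theory). Since you correctly recognise that your argument is incomplete and you do not claim to settle the conjecture, there is no incorrect claim to flag; what remains is to compare approaches.

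Your reduction is valid as far as it goes: writing $g\in\U_{\{\P\}}$ as $t_1\cdots t_k v$ with $t_i\in T$ and $v\in V=\U_c$, iterating the cocycle identity, and using surjectivity of $\sigma_{\lambda,\P}\colon\U_{\{\P\}}\twoheadrightarrow F_j$, you do obtain $F_j=\langle S,(F_j)_{\lambda_j(c)}\rangle$ with $(F_j)_{\lambda_j(c)}$ compact, so relative compactness of $S$ would indeed finish the proof. This framing is genuinely different from anything in the paper. In \cref{thm:compgen3}, $F_j$ is written as a directed union of compactly generated open subgroups $H_{(k)}$, each $H_{(k)}$ is inflated to a subgroup $K_{(k)}\leq\U$, and compactness of a fixed generating set of $\U$ pins down some $k$ with $\U=K_{(k)}$; the conclusion $F_j=\langle H_{(k)},F_j^+\rangle$ then needs $F_j^+$ compactly generated as an auxiliary assumption, which the paper shows (via the construction in the remark following \cref{thm:compgen3}) is not automatic. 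Your route avoids $F_j^+$ entirely, at the cost of requiring a compactness statement about $S$ that none of the paper's tools supplies; the two difficulties look comparable rather than one being a reduction of the other.

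Your diagnosis of the obstacle is on target: \cref{lem:parallellocals} only controls local actions within a parallelism class, and when $j^\perp=\emptyset$ that gives nothing, which is exactly the tree-like case. The paper's rank-$2$ argument uses Bass--Serre theory and the normal form theorem for graphs of groups rather than any compactness of local-action sets, and the closing remark of the section proposes complexes of groups as a possible higher-rank generalisation. One caveat on your suggested route via \cref{lem:nicesubbuildings}: that lemma produces subbuildings of the \emph{same} type (only the colour sets shrink, not the index set), so it does not by itself reduce the rank; a genuine rank reduction to the tree case would need a separate mechanism, for instance projecting onto a rank-$2$ residue and controlling how compact generation behaves under that projection.
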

If true, this would give a precise characterisation when a closed and locally compact universal group is compactly generated: $\U(\F)$ is compactly generated if, and only if, the local groups $F_i$ are compactly generated and have finitely many orbits.
	
As a motivation we present some (affirmative) results for specific special cases. First we assume some additional information about the subgroups $F_i^+$ of the local groups $F_i$. In the following proofs, we use the fact that every locally compact group is a directed union of its compactly generated open subgroups; see \cite[Proposition 2.C.3 (1)]{cornulier-delaharpe}.
\begin{theorem}
	\label{thm:compgen3}
	Assume that $\U(\F)$ is closed, locally compact, and compactly generated. Moreover, assume that every $F_i^+$ is compactly generated. Then the local groups $F_i$ are compactly generated.
\end{theorem}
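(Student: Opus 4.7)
The plan is to construct an explicit retraction $\bar{\sigma}_i \colon \U/\U^+ \twoheadrightarrow F_i/F_i^+$; combined with compact generation of $\U$, this will imply that the discrete group $F_i/F_i^+$ is finitely generated, and together with the hypothesis that $F_i^+$ is compactly generated, this will yield compact generation of $F_i$.

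First I note that $\U^+$ is an open normal subgroup of $\U$: it is open because it contains the open chamber stabilisers, and normal because conjugation permutes them ($g\U_c g^{-1} = \U_{g\acts c}$). Consequently $\U/\U^+$ is discrete, and since $\U$ is compactly generated, $\U/\U^+$ is finitely generated. Now, for $g \in \U$ and an $i$-panel $\P$, define $\bar{\sigma}_i(g) = \sigma_\lambda(g,\P) \cdot F_i^+$. The crucial step is to show that this coset does not depend on $\P$. Given two $i$-panels $\P, \P'$ and chambers $c_0 \in \P$, $c_k \in \P'$ joined by a minimal gallery $c_0 \sim_{j_1} c_1 \sim_{j_2} \dots \sim_{j_k} c_k$, let $\P_r$ be the $i$-panel of $c_r$. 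For $j_{r+1} = i$, the panels $\P_r$ and $\P_{r+1}$ coincide; for $j_{r+1} \neq i$, the $j_{r+1}$-adjacency of $c_r$ and $c_{r+1}$ forces $\lambda_i(c_r) = \lambda_i(c_{r+1})$ and $\lambda_i(g\acts c_r) = \lambda_i(g\acts c_{r+1})$, so $\sigma_\lambda(g,\P_r)$ and $\sigma_\lambda(g,\P_{r+1})$ agree on $\lambda_i(c_r)$ and their ratio $\sigma_\lambda(g,\P_r)\cdot\sigma_\lambda(g,\P_{r+1})^{-1}$ lies in the point stabiliser $(F_i)_{\lambda_i(g\acts c_r)} \subseteq F_i^+$. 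Telescoping yields $\sigma_\lambda(g,\P) \cdot \sigma_\lambda(g,\P')^{-1} \in F_i^+$, establishing well-definedness.

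The cocycle identity $\sigma_\lambda(gh,\P) = \sigma_\lambda(g, h\acts\P) \cdot \sigma_\lambda(h,\P)$, combined with this independence, makes $\bar{\sigma}_i$ a homomorphism, and by \cref{lem:pluslocal} its kernel contains $\U^+$, so $\bar{\sigma}_i$ factors through $\U/\U^+$. A direct verification using \cref{referee1} and \cref{lem:pluslocal} shows that $\U_{\{\P\}} \cap \U^+ = \sigma_{\lambda,\P}^{-1}(F_i^+) \cap \U_{\{\P\}}$, yielding a natural embedding $F_i/F_i^+ \cong \U_{\{\P\}}/(\U_{\{\P\}} \cap \U^+) \hookrightarrow \U/\U^+$ on which $\bar{\sigma}_i$ restricts to the identity. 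Hence $\bar{\sigma}_i$ is a genuine retraction, and $F_i/F_i^+$ is finitely generated as a retract of the finitely generated discrete group $\U/\U^+$. To conclude, if $K \subseteq F_i^+$ is a compact generating set and $g_1, \dots, g_n \in F_i$ are lifts of a finite generating set of $F_i/F_i^+$, then $K \cup \{g_1, \dots, g_n\}$ is a compact generating set for $F_i$.

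The main subtle point is the independence of $\P$ in the definition of $\bar{\sigma}_i$; this relies essentially on the right-angled geometry, via the basic observation that $j$-adjacent chambers ($j \neq i$) share the same $i$-colour and the fact that $F_i^+$ is a normal subgroup of $F_i$.
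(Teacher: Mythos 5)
Your proof is correct, and it takes a genuinely different route from the paper's. The paper writes each $F_i$ as a directed union of compactly generated open subgroups $H_{(k)}$, forms the corresponding subgroups $K_{(k)}=\langle\U_c,\,\U(F_1,\dots,H_{(k)},\dots,F_n)\rangle$, shows $\U=\bigcup_k K_{(k)}$, and uses compactness of a generating set of $\U$ to pin down a single $K_{(k)}$ equal to $\U$; reading off local actions then gives $F_i=\langle H_{(k)},F_i^+\rangle$. You instead build a well-defined surjective homomorphism $\bar\sigma_i\colon\U/\U^+\twoheadrightarrow F_i/F_i^+$, observe that $\U^+$ is open and normal so that $\U/\U^+$ is a finitely generated discrete group, and conclude that the quotient $F_i/F_i^+$ is finitely generated; together with compact generation of $F_i^+$ this yields compact generation of $F_i$. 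The key computation — that the coset $\sigma_\lambda(g,\P)F_i^+$ is independent of the $i$-panel $\P$ — is essentially the same telescoping-along-a-gallery argument that the paper uses to prove \cref{lem:pluslocal}, repackaged, and normality of $F_i^+$ is used exactly where you say it is. Your approach is arguably more transparent: it isolates a clean structural statement (the induced map $\U/\U^+\to F_i/F_i^+$ is onto) that directly explains why compact generation of $\U$ constrains $F_i$, whereas the paper's directed-union argument is more of a compactness trick. One small remark: the retraction claim (the splitting $F_i/F_i^+\hookrightarrow\U/\U^+$ via $\U_{\{\P\}}$) is correct but does slightly more than stated — verifying $\sigma_{\lambda,\P}^{-1}(F_i^+)\cap\U_{\{\P\}}\subseteq\U^+$ requires an appeal to \cref{lem:extendlocaluniversal} to express the local action as a product of local actions of chamber stabilisers, plus $\U_{(\P)}\subseteq\U^+$ — and in any case it is unnecessary for the conclusion: surjectivity of $\bar\sigma_i$, which already follows from \cref{referee1}, is all you need.
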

\begin{proof}
	For ease of notation, we fix one $F_i=F$ and show that $F$ is compactly generated. Choose any chamber $c\in\Delta$, let $x=\lambda_i(c)$, and let $H_{(0)}$ be the stabiliser of $x$ in $F$. Now write $F$ as a directed union
	\newcommand{\dircup}{\mathop{\overrightarrow{\bigcup}}\displaylimits}
	\[F = \dircup_{k\in\mathbb I} H_{(k)}\]
	of compactly generated open subgroups, where $0$ is the least element of the index set $\mathbb I$. For each $k\in\mathbb I$, let $K_{(k)}$ be the subgroup of $\U$ generated by the chamber stabiliser $\U_c$ and by $\U(F_1,\ldots,F_{i-1},H_{(k)},F_{i+1},\ldots,F_n)$. Then $\U$ is the directed union
	\[\U = \dircup_{k\in\mathbb I} K_{(k)}.\]
	By assumption $\U$ is compactly generated, say $\U = \langle Q\rangle$ with $Q$ compact. The collection of subgroups $\{K_{(k)} \mid k\in\mathbb I\}$ defines an open cover of $Q$. By compactness, if follows that $Q\subseteq K_{(k)}$ for some $k$, i.e.~$\U = K_{(k)}$.
	
	Now consider the local actions of $\U$ at the $i$-panels. By definition and by \cref{localpermiso}, the local actions are given by the group $F$. On the other hand the local actions of $K_{(k)}$ are generated by $H_{(k)}$ and the local actions of $\U_c$. Hence $F=\langle H_{(k)},F^+\rangle$ (by \cref{lem:pluslocal}). As $H_{(k)}$ is compactly generated by construction and $F^+$ by assumption, our conclusion follows.
\end{proof}

\begin{remark}
	It seems difficult to get rid of the assumption that the groups $F_i^+$ are compactly generated. As Derek Holt kindly pointed out (\cite{mathoverflowholt}), this property does not follow automatically from \cref{thm:compgen1} and the other assumptions in \cref{thm:compgen3}. More precisely, if $F\leq\Sym(X)$ has finitely many orbits and all point stabilisers have finite orbits, then $F$ need not be generated by finitely many point stabilisers---not even in the more restrictive setting where $F=F^+$.
	
	Consider the following counterexample. Let $F=A\rtimes\langle t\rangle$, where $A$ is an infinite abelian group of odd finite exponent, $t^2=1$, and $t^{-1}at=a^{-1}$ for all $a\in A$. Let $F$ act by (left) translation on the set $X$ of (left) cosets of $\langle t\rangle$ in $F$. Then $F$ acts transitively on $X$, every point stabiliser $F_{a\langle t\rangle} = \{1,a^2t\}$ has order $2$, and finally $F = F^+$, but $F$ is not generated by a finite number of point stabilisers.
	
	Endowed with the permutation topology $F$ is totally disconnected, locally compact, and has finitely many orbits, but $F=F^+$ is not compactly generated. The question remains whether or not $F$ can occur as a local group of a compactly generated universal group.
\end{remark}

Furthermore, when $\Delta$ is of rank $2$ (i.e.~a tree), we do not need assumptions on $F_i^+$. One can show the following proposition using Bass--Serre theory.
\begin{proposition}
\label{prop:compgenfortrees}
	Let $G$ be a compactly generated totally disconnected locally compact group. Suppose that $G$ acts discretely on a tree $\mathcal T$ without edge inversions, such that the quotient graph $G\backslash\mathcal T$ is finite and the edge stabilisers are compact open subgroups. Then the vertex stabilizers are compactly generated.
\end{proposition}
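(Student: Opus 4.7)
The plan is to invoke the version of Bass--Serre theory appropriate for totally disconnected locally compact groups. Since each edge stabiliser is compact open in $G$, and each vertex of $\mathcal T$ is incident to some edge, every vertex stabiliser $G_v$ is also open in $G$ (it contains an adjacent edge stabiliser as an open subgroup). Fix finite sets of orbit representatives $\{v_1,\ldots,v_k\}$ for vertices and $\{e_1,\ldots,e_m\}$ for oriented edges of $\mathcal T$, lift a spanning tree of $G\backslash\mathcal T$ to a subtree $\tilde T_0 \subseteq \mathcal T$, and exploit the hypothesis that $G$ acts without edge inversions to present $G$ as the fundamental group of a finite graph of groups. Concretely, $G$ is generated by $\bigcup_i G_{v_i}$ together with a finite set of stable letters $\{t_1,\ldots,t_r\}$, one per orbit of edges outside $\tilde T_0$, modulo the standard relations identifying the two embeddings of each edge group into its endpoint vertex groups.

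To pass from compact generation of $G$ to compact generation of a fixed vertex stabiliser $G_v$, apply \cref{lem:goodgeneratingset} with the compact open subgroup $V := G_{e_1}$, obtaining a finite symmetric set $T = \{\tau_1,\ldots,\tau_n\}$ with $G = \langle T\rangle \cdot V$. For each $\tau_j$, fix once and for all a Bass--Serre normal form expressing $\tau_j$ as a specific word in the $t_l^{\pm 1}$ interspersed with elements of vertex groups $G_{v_i}$. Given an arbitrary $h \in G_v$, write $h = ws$ with $s \in V$ and $w$ a word in $T$, then expand each letter of $w$ via its fixed normal form. The resulting expression corresponds to a closed walk at $v$ in $\mathcal T$, and the usual rewriting using the amalgamation relations regroups the vertex-group factors into edge stabilisers incident to the walk, conjugated by a uniformly bounded collection of ``coset representatives'' relating the finitely many vertex orbits.

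The upshot is that $G_v$ is generated by (i) the finite union of edge stabilisers of the finitely many edges of $\tilde T_0$ incident to $v$, together with $G_v \cap V$, which is a compact subset of $G_v$; and (ii) a finite, explicit list of ``loop elements'' at $v$ built from the $t_l^{\pm 1}$ and the fixed coset representatives obtained from $T$. The main technical obstacle is controlling the rewriting so that only finitely many such coset representatives appear; this is precisely the content of the Bass--Serre normal form theorem for graphs of groups, where the absence of edge inversions and the finiteness of $G\backslash\mathcal T$ are used in an essential way.
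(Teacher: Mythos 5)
Your proposal takes a genuinely different route from the paper, but the central step is not justified and I believe it contains a real gap.

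The paper's argument (which the paper attributes to Colin Reid) avoids any explicit rewriting. It first writes $G_v$ as a directed union $G_v = \bigcup_{i} H_{(i)}$ of compactly generated open subgroups, where $H_{(0)}$ is the subgroup generated by the images of the edge groups at $v$; it then sets $K_{(i)} = \langle H_{(i)}, G_{v_2},\dots,G_{v_k}, F(E)\rangle$, observes that $G$ is the directed union of the open subgroups $K_{(i)}$, and uses compactness of a generating set to conclude $G = K_{(i)}$ for a \emph{single} index $i$. Only then is the normal form theorem invoked, and only once, to deduce $K_{(i)} \cap G_{v} = H_{(i)}$: since $H_{(i)}$ contains all edge groups incident to $v$, replacing $G_{v}$ by $H_{(i)}$ yields a graph of groups whose fundamental group embeds in $G$ and meets $G_{v}$ exactly in $H_{(i)}$. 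Hence $G_v = H_{(i)}$ is compactly generated.

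Your proof instead fixes a good generating set $T$ for $G$, expands each $\tau_j \in T$ in a fixed normal form, and then for $h \in G_v$ expands $h = ws$ letter by letter and tries to collapse the resulting word by Britton-style reductions back into $G_v$. The sticking point is your claim that these reductions ``regroup the vertex-group factors into edge stabilisers incident to the walk, conjugated by a uniformly bounded collection of coset representatives,'' and that this is ``precisely the content of the Bass--Serre normal form theorem.'' This is not what the normal form theorem gives you. When a long alternating word representing $h\in G_v$ is reduced, each pinch replaces a block $x\,c\,y$ (with $c$ in an edge group) by the single syllable $xcy$; after many pinches, the surviving syllables are products of \emph{arbitrarily many} of the original factors interleaved with edge-group elements. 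These products need not lie in any fixed compact subset of the relevant vertex group, because their complexity grows with the length of the word $w$, which is unbounded as $h$ ranges over $G_v$. In particular, the factors that must eventually collapse into an edge group are products of elements taken from vertex groups \emph{other} than $G_v$, and nothing forces these to lie in a fixed compact set. So the conclusion that $G_v$ is generated by ``finitely many loop elements'' together with a compact subset does not follow from the rewriting as described. Making your approach rigorous would essentially require you to first bound which compactly generated open subgroup of $G_v$ catches all the syllables produced by reduction, which is exactly what the directed-union compactness argument in the paper is designed to achieve.
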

\begin{proof}
	This argument is due to Colin Reid (\cite{mathoverflowreid}). Let $v_1,\ldots,v_k$ be representatives for the vertices in the quotient graph. Let $E$ be a set of representatives for the edges in the quotient, and let $E'\subseteq E$ be a subset of edges representing a spanning tree in $G\backslash\mathcal T$.
	
	Using Bass--Serre theory, we can write $G$ in the form
	\[G \cong \frac{\strut G_{v_1} \ast \dots \ast G_{v_k} \ast F(E)}%
				{\llangle\strut \overline{e}\cdot\alpha_e(g)\cdot e = \alpha_{\overline{e}}(g) \enspace\text{for $g \in G_e$}, 			\quad e\cdot\overline{e} \enspace\text{for $e \in E$},
								\quad e \enspace\text{for $e \in E'$}\rrangle},\]
	where $\alpha_e\colon G_e \rightarrow G_{o(e)}$ is the natural embedding of an edge group into the vertex group of the origin of the edge.
	
	Now fix a vertex $v=v_1$. The subgroup $\left\langle \alpha_e(G_e) \mid e\in E, o(e)=v\right\rangle \leq G_v$ is open and compactly generated; denote this group by $H_{(0)}$. Now write $G_v$ as a directed union
	\newcommand{\dircup}{\mathop{\overrightarrow{\bigcup}}\displaylimits}
	\[G_v = \dircup_{i\in\mathbb I} H_{(i)}\]
	of compactly generated open subgroups, where $0$ is the least element of the index set $\mathbb I$. For each $i\in\mathbb I$, let $K_{(i)} = \left\langle H_{(i)}, G_{v_2},\ldots, G_{v_k}, F(E)\right\rangle$. Then $G$ is the directed union
	\[G = \dircup_{i\in\mathbb I} K_{(i)}.\]
	By assumption $G$ is compactly generated, say $G=\langle Q\rangle$ with $Q$ compact. The collection of (compactly generated and open) subgroups $\{K_{(i)} \mid i\in\mathbb I\}$ defines an open cover of $Q$. By compactness, if follows that $Q\subseteq K_{(i)}$ for some $i\in\mathbb I$, i.e.~$G = K_{(i)}$. More explicitly, every $g\in G_v$ is a product of elements in $H_{(i)}\cup G_{v_2}\cup{\cdots}\cup G_{v_k}\cup F(E)$. By the normal form theorem for graphs of groups (\cite{higgings76}), we then conclude that $g\in H_{(i)}$, i.e.~that $G_v = H_{(i)}$ is compactly generated.
\end{proof}
\begin{corollary}
	Assume that $\U(F_1,F_2)$ is closed, locally compact, and compactly generated. Then also the local groups $F_1$ and $F_2$ are compactly generated.
\end{corollary}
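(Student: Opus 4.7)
The strategy is to translate the hypotheses into the language of Proposition \ref{prop:compgenfortrees} so that we can apply it, then convert its conclusion back to the universal-group setting via Proposition \ref{prop:stabcompgen}.

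First I would set up the dictionary. Since $\Delta$ is a rank~$2$ right-angled building, it arises as the chamber system of a biregular tree $\mathcal T$: chambers of $\Delta$ correspond to edges of $\mathcal T$, a panel of type $i\in\{1,2\}$ corresponds to a vertex of $\mathcal T$ in the bipartition class $i$, and (type-preserving) automorphisms of $\Delta$ are precisely automorphisms of $\mathcal T$ preserving the bipartition. Under this correspondence the natural action of $G:=\U(F_1,F_2)$ on $\mathcal T$ has chamber stabilisers as edge stabilisers and panel stabilisers as vertex stabilisers.

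Next I would verify the hypotheses of Proposition \ref{prop:compgenfortrees} for $G \curvearrowright \mathcal T$. Since the action preserves types, it preserves the bipartition of $\mathcal T$, so there are no edge inversions. Edge stabilisers are chamber stabilisers, which are compact open by Proposition \ref{prop:locallycompact}, and vertex stabilisers (panel stabilisers) are therefore open as well; this gives the desired compatibility of the action with the topology. For the finiteness of the quotient graph $G\backslash\mathcal T$, I would invoke Theorem \ref{thm:compgen1}: since $\U(F_1,F_2)$ is closed, locally compact, and compactly generated, each $F_i$ has only finitely many orbits on $X_i$. By Lemma \ref{lem:orbits}, two $i$-panels $\mathcal P, \mathcal P'$ are $\U$-equivalent iff the (unique) $j$-colour on $\mathcal P$ and on $\mathcal P'$ lie in the same $F_j$-orbit (where $\{i,j\}=\{1,2\}$); so $\U$ has only finitely many orbits on panels of each type, and likewise finitely many orbits on chambers, i.e.\ $G\backslash\mathcal T$ is finite.

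Having checked these hypotheses, Proposition \ref{prop:compgenfortrees} yields that every vertex stabiliser of $G\curvearrowright\mathcal T$ is compactly generated; translating back, each panel stabiliser $\U_{\{\mathcal P\}}$ is compactly generated. Finally, Proposition \ref{prop:stabcompgen} (which requires local compactness of $\U$, already assumed) implies that each local group $F_i$ is compactly generated, completing the proof.

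The only substantive point requiring care is the interpretation of the word ``discretely'' in the statement of Proposition \ref{prop:compgenfortrees}: what is really used in its Bass--Serre theoretic proof is that edge and vertex stabilisers are open and that edge stabilisers are compact, both of which hold here. All remaining verifications are bookkeeping via the tree/building dictionary; the conceptual content is packaged in Theorem \ref{thm:compgen1}, Proposition \ref{prop:stabcompgen} and Proposition \ref{prop:compgenfortrees}.
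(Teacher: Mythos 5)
Your proof is correct and takes essentially the same route as the paper: apply Proposition~\ref{prop:compgenfortrees} to $G=\U(F_1,F_2)$ acting on the tree underlying $\Delta$, use Theorem~\ref{thm:compgen1} (via Lemma~\ref{lem:orbits}) to obtain a finite quotient graph, and then pass from compact generation of panel stabilisers to compact generation of the local groups via Proposition~\ref{prop:stabcompgen}. You spell out several steps the paper leaves implicit, notably the chambers-as-edges/panels-as-vertices dictionary and the final appeal to Proposition~\ref{prop:stabcompgen}, but the argument is the same.
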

\begin{proof}
	This follows immediately from \cref{prop:compgenfortrees} using $\mathcal T=\Delta$ and $G=\U(F_1,F_2)$. Note that the quotient graph $G\backslash\mathcal T$ is finite by \cref{thm:compgen1}.
\end{proof}

\begin{remark}
	The similarity between the proofs of \cref{thm:compgen3} and \cref{prop:compgenfortrees} is of course striking. Finding a suitable generalisation of the necessary Bass--Serre theory and normal forms (e.g.~using the theory of complexes of groups by Martin Bridson and Andr\'e Haefliger  \cite{bridson13}) might be a way of settling \cref{theconjecture}.
\end{remark}


\section{Simplicity}
\label{se:simp}

In \cite[Theorem 23]{smith2017}, Simon Smith proved a simplicity result for universal groups in rank 2 (i.e.~for trees), in terms of the permutation properties of the two local groups. We extend his result to right-angled buildings of higher rank, where also the combinatorial properties of the diagram of $\Delta$ will be of importance.

In this section, let $\U=\U_\Delta(\F)$ be the universal group and let $\U^+$ be the subgroup of $\U$ generated by all chamber stabilisers\footnote{Note that this notation differs from \cite{caprace14} where the notation $\Aut(\Delta)$ is meant to also include non-type-preserving automorphisms, and $\Aut(\Delta)^+$ denotes the subgroup of type-preserving automorphisms of $\Delta$. In this paper we only consider type-preserving automorphisms.}. $\U^+$ is a normal subgroup and hence a major obstruction for the universal group to be simple. The primary goal in this section is to characterise when $\U^+=\U$; we can then follow the proof of \cite[Theorem 3.25]{demedts2018} to characterise when $\U$ is (abstractly) simple.

First we need some results about \emph{implosions} of right-angled buildings: we show that we have a controlled way to ``collapse'' a coloured building by collapsing the colour sets.
\begin{proposition}
	\label{th:implosion}
	Let $\Delta$ be a semiregular right-angled building over $I$ and let $\lambda$ be a legal colouring of $\Delta$ using the colour sets $X_i$ (for $i\in I$). For every $i\in I$, consider an equivalence relation $\equiv_i$ on $X_i$, and let $I'=\{i\in I\mid\text{${\equiv_i}$ is \emph{not} the universal relation}\}$.
	
	 Define the new semiregular right-angled building $\Delta'$ over $I'$, the diagram of which is induced by the diagram of $\Delta$, with parameters $q'_i = \lvert X_i/{\equiv_i}\rvert$ (for every $i\in I'$). Equip $\Delta'$ with a legal colouring $\lambda'$ using the quotient set $X_i/{\equiv_i}$ as the set of $i$-colours.

	Then there exists a surjective map $\tau\colon\Delta\to\Delta'$ such that $\lambda'_i(\tau(c)) = [\lambda_i(c)]_i$ for all chambers $c\in\Delta$ and such that $\dist_{\Delta'}(\tau(c_1),\tau(c_2)) \leq \dist_\Delta(c_1,c_2)$ for all $c_1,c_2\in\Delta$.
\end{proposition}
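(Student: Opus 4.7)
The plan is to construct $\tau$ by translating galleries from a fixed base chamber into $\Delta'$ step by step. Fix $c_0\in\Delta$ and choose any chamber $c'_0\in\Delta'$ with $\lambda'_i(c'_0)=[\lambda_i(c_0)]_i$ for every $i\in I'$; such a chamber exists because every legal colouring on a semiregular right-angled building realises every tuple of colours (one can reach it from any starting chamber by at most $|I'|$ steps, as in \cref{cor:cobounded}). Set $\tau(c_0):=c'_0$. For an arbitrary chamber $c\in\Delta$, choose any gallery
\[
c_0=d_0\sim_{j_1}d_1\sim_{j_2}\cdots\sim_{j_k}d_k=c
\]
and recursively define a sequence $c'_0=e_0,e_1,\ldots,e_k$ in $\Delta'$ by the rule: if $j_\ell\notin I'$, or if $j_\ell\in I'$ with $[\lambda_{j_\ell}(d_{\ell-1})]_{j_\ell}=[\lambda_{j_\ell}(d_\ell)]_{j_\ell}$, put $e_\ell:=e_{\ell-1}$; otherwise let $e_\ell$ be the unique chamber of $\Delta'$ that is $j_\ell$-adjacent to $e_{\ell-1}$ and has $j_\ell$-colour $[\lambda_{j_\ell}(d_\ell)]_{j_\ell}$. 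Set $\tau(c):=e_k$.

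The main obstacle is showing that $\tau(c)$ does not depend on the chosen gallery. By the standard reduction theory for Coxeter complexes (cf.\ \cite{ronan}, as also recalled in the proof of \cref{lem:nicesubbuildings}), any two galleries with the same endpoints are connected by a sequence of elementary contractions and elementary homotopies $ij\mapsto ji$ with $m_{ij}=2$, so it suffices to verify that each such operation preserves the endpoint $e_k$. Elementary contractions are straightforward: two consecutive $j$-steps in $\Delta$ either both collapse to a stay in $\Delta'$, or describe a $j$-step followed by its reversal in $\Delta'$; both scenarios are consistent with contracting the pair in $\Delta$, using that $\lambda'_j$ is bijective on each $j$-panel of $\Delta'$. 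For an elementary homotopy replacing $d\sim_i e\sim_j f$ by $d\sim_j e'\sim_i f$ with $m_{ij}=2$: if both $i,j\in I'$, then $m'_{ij}=2$ in $\Delta'$ since the diagram of $\Delta'$ is inherited from that of $\Delta$, and a direct case analysis on whether $[\lambda_i(d)]_i=[\lambda_i(e)]_i$ and whether $[\lambda_j(e)]_j=[\lambda_j(f)]_j$ shows that both orderings end at the same corner of the translated $\{i,j\}$-residue. If $i\notin I'$ (the case $j\notin I'$ being symmetric), any $i$-step is trivial in $\Delta'$ regardless of position, and the single $j$-step produces the same chamber either way.

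The remaining assertions are then routine. Colour compatibility $\lambda'_i(\tau(c))=[\lambda_i(c)]_i$ for every $i\in I'$ follows by induction on gallery length directly from the construction rule. Surjectivity follows by lifting: every chamber $c'\in\Delta'$ is reached from $c'_0$ by some gallery, whose types automatically lie in $I'$, and each step lifts to a step in $\Delta$ by choosing any representative of the required equivalence class in the corresponding panel; the endpoint of the lifted gallery is a preimage of $c'$ under $\tau$. Finally, for the distance inequality, observe that the construction launched from an arbitrary pair $(c,\tau(c))$ in place of $(c_0,c'_0)$ produces the same map $\tau$ by well-definedness. Applying this to a minimal gallery from $c_1$ to $c_2$ in $\Delta$ yields a gallery in $\Delta'$ from $\tau(c_1)$ to $\tau(c_2)$ of length at most $\dist_\Delta(c_1,c_2)$, completing the proof.
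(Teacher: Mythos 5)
Your proof is correct, but it takes a genuinely different route from the paper's. The paper defines $\tau$ by induction on the distance from a base chamber and verifies well-definedness directly via the closing-squares lemma (\cref{lem:closingsquares}~(i)), then proves the non-expansiveness again via closing squares (ii). You instead translate arbitrary galleries and invoke the reduction theory of buildings to show the endpoint is invariant, which replaces the closing-squares argument with a more classical (and arguably more conceptual) piece of machinery. For the distance inequality, where the paper redoes a square-closing argument, you extract it cleanly as a consequence of well-definedness plus base-point independence, which is a nice economy.

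One small point you should make explicit: when connecting two galleries with the same endpoints, the standard reduction theory gives you contractions \emph{down} to a minimal gallery and homotopies \emph{between} minimal galleries. To transit from an arbitrary gallery $\gamma_1$ to another arbitrary gallery $\gamma_2$, you must therefore also allow elementary \emph{expansions} (inverses of contractions) on the way back up from the shared minimal representative to $\gamma_2$. Your argument does cover this implicitly — if the endpoint of the translated gallery is preserved under a contraction, the same equality read backwards shows it is preserved under the corresponding expansion — but the step is worth a sentence so the reader does not wonder whether the reduction-theoretic citation really covers arbitrary pairs of galleries. With that remark added, your proof is complete, and the trade-off relative to the paper is roughly: the paper stays self-contained within the right-angled combinatorics already developed, while your version leans on the simple-connectedness of the building's gallery complex and thus might adapt more easily to other situations where a closing-squares lemma is not available.
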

\begin{proof}
	Let $c_0\in\Delta$, $c'_0\in\Delta'$ be two chambers such that $\lambda'_i(c'_0) = [\lambda_i(c_0)]_i$ for all $i\in I'$. We construct $\tau$ by induction on the distance from $c_0$, settling the induction base by declaring $\tau(c_0)=c'_0$. For $c\in\Delta$ such that $\dist(c_0,c)=n+1$, let $d$ be such that $\dist(c_0,d)=n$ and $d\sim_i c$. If $\lambda_i(c)\equiv_i\lambda_i(d)$ (in particular, if $i\notin I'$), then we set $\tau(c)=\tau(d)$. Otherwise we set $\tau(c)$ to be the unique chamber in $\Delta'$ $i$-adjacent to $\tau(d)$ such that $\lambda'_i(\tau(c))=[\lambda_i(c)]_i$.
	
	Since $\tau(c)$ a priori depends on the choice of $d$, we need to show that $\tau$ is well-defined. In order to do so, suppose that both $d_1$ and $d_2$ satisfy $\dist(c_0,d_1)=n=\dist(c_0,d_2)$ and $d_1\sim_i c\sim_j d_2$. By \cref{lem:closingsquares} (i), there exists a chamber $e$ such that $\dist(c_0,e)=n-1$ and $d_1\sim_j e\sim_i d_2$. Since $\lambda_i(e)=\lambda_i(d_1)$, $\lambda_i(d_2)=\lambda_i(c)$, $\lambda_j(e)=\lambda_j(d_2)$, $\lambda_j(d_1)=\lambda_j(c)$, the images of the paths $e\sim d_1\sim c$ and $e\sim d_2\sim c$ end up in the same chamber in $\Delta'$, so $\tau(c)$ is indeed well-defined.
	
	This extends $\tau$ to the whole of $\Delta$, and by construction, we have $\lambda'_i(\tau(c)) = [\lambda_i(c)]_i$ for all $c\in\Delta$.
	
	It is not hard to see that $\tau$ is surjective, since any gallery $\gamma'$ in $\Delta'$ can be ``lifted'' to a gallery $\gamma$ in $\Delta$ such that $\tau(\gamma)=\gamma'$. Explicitly, let $\gamma'$ be a gallery
	\[d'_0 \sim_{i_1} d'_1 \sim_{i_2} d'_2 \sim_{i_3} {\cdots} \sim_{i_n} d'_n\]
	in $\Delta'$. For every $1\leq k\leq n$, let $x_k$ be a representative of the equivalence class $\lambda'_{i_k}(d'_{k})$. Let $d_0\in\Delta$ be any chamber such that $\tau(d_0)=d'_0$, and, for every $1\leq k\leq n$, let $d_k\in\Delta$ be the unique chamber $i_k$-adjacent to $d_{k-1}$ such that the $i_k$-colour of $d_k$ equals $x_k$. Note that the $i_k$-colours of $d_{k-1}$ and $d_k$ cannot be $i_k$-equivalent, since then $d'_{k-1}$ and $d'_k$ would be the same chamber in $\Delta'$; hence $d_k$ is well-defined. If we then let $\gamma$ be the gallery
	\[d_0 \sim_{i_1} d_1 \sim_{i_2} d_2 \sim_{i_3} {\cdots} \sim_{i_n} d_n\]
	in $\Delta$, we clearly have $\tau(\gamma)=\gamma'$. In particular we see that $\tau$ is surjective (using $d'_0=c'_0$).

	It remains to show that $\tau$ is nonexpansive, i.e.~that $\dist_{\Delta'}(\tau(c_1),\tau(c_2)) \leq \dist_\Delta(c_1,c_2)$ for all $c_1,c_2\in\Delta$. It suffices to show this for adjacent chambers, so assume that $c_1\sim_j c_2$. We use induction on the distance to $c_0$. If $\dist(c_0,c_1)\neq\dist(c_0,c_2)$, then this is clear (by the very definition of $\tau$). Suppose that $\dist(c_0,c_1)=\dist(c_0,c_2)=n+1$ and let $d_1$ satisfy $\dist(c_0,d_1)=n$ and $d_1\sim_i c_1$. There are two possibilities. First, if $i=j$, then also $d_1\sim_i c_2$ in $\Delta$ and either $\tau(c_1)=\tau(c_2)$ or $\tau(c_1)\sim_j\tau(c_2)$ in $\Delta'$. Second, if $i\neq j$, then by \cref{lem:closingsquares} (ii) there exists a chamber~$d_2$ such that $\dist(c_0,d_2)=n$ and $d_1\sim_j d_2\sim_i c_2$. Since $\lambda_i(c_1)=\lambda_i(c_2)$, $\lambda_i(d_1)=\lambda_i(d_2)$, $\lambda_j(c_1)=\lambda_j(d_1)$, $\lambda_j(c_2)=\lambda_j(d_2)$, and since either $\tau(d_1)=\tau(d_2)$ or $\tau(d_1)\sim_j\tau(d_2)$, we see that either $\tau(c_1)=\tau(c_2)$ or $\tau(c_1)\sim_j\tau(c_2)$ in $\Delta'$ as well.
\end{proof}
\begin{definition}
	\label{def:implosion}
	 We call $(\tau,\Delta')$ an \emph{implosion} of $\Delta$ (with respect to the relations $\equiv_i$).
\end{definition}

The following proposition shows why implosions are useful for understanding $\U^+$.
\begin{proposition}
	\label{th:implosion2}
	For every $i\in I$, define $\equiv_i$ by declaring $i$-colours to be equivalent if and only if they are contained in the same orbit of $F_i$. Let $(\tau,\Delta')$ be an implosion of $\Delta$ with respect to $\equiv_i$. Then the group $\U_\Delta(\F)^+$ stabilises the fibres of $\tau$.
\end{proposition}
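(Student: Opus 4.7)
The plan is to leverage the fact that $\U^+$ is generated by chamber stabilisers. Observe that the set $H = \{g\in\U\mid\tau\circ g = \tau\}$ is a subgroup of $\U$, since $\tau\circ g = \tau$ is equivalent to $g$ preserving every fibre of $\tau$ setwise, a condition clearly closed under products and inverses. So it will suffice to show that $H$ contains $\U_{c_0}$ for every chamber $c_0\in\Delta$. Fix therefore such a $c_0$ and an element $g\in\U_{c_0}$; the goal is to prove $\tau(g\acts c) = \tau(c)$ for every $c\in\Delta$. The key input is \cref{lem:pluslocal}: every local action $\sigma_\lambda(g,\P)$ of an element of $\U^+$ lies in $F_i^+\leq F_i$ and hence preserves each $F_i$-orbit setwise, so $\lambda_i(g\acts c) \equiv_i \lambda_i(c)$ for every $c\in\Delta$ and every $i\in I$.

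Before the induction proper, I will record the following behaviour of $\tau$ on adjacent chambers: whenever $c_1\sim_i c_2$ in $\Delta$, either $\tau(c_1) = \tau(c_2)$, precisely when $\lambda_i(c_1)\equiv_i\lambda_i(c_2)$, or $\tau(c_1)\sim_i\tau(c_2)$ in $\Delta'$ otherwise. This will follow from the two properties of $\tau$ in \cref{th:implosion}: by the legal colouring of $\Delta$ we have $\lambda'_k(\tau(c_1)) = \lambda'_k(\tau(c_2))$ for every $k\neq i$, and by non-expansiveness $\dist_{\Delta'}(\tau(c_1),\tau(c_2))\leq 1$. Hence either the two images coincide, or they are adjacent in $\Delta'$; an adjacency of any type other than $i$ would contradict the first observation, and the legal colouring of $\Delta'$ then distinguishes the two cases by the value of $\lambda'_i$.

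With this in hand, the claim will follow by induction on $n = \dist(c_0,c)$. The base case $c = c_0$ is immediate, since $g\acts c_0 = c_0$. For the inductive step, I will choose $c'\sim_i c$ with $\dist(c_0,c') = n-1$; then $\tau(g\acts c') = \tau(c')$ by the induction hypothesis, and $g\acts c'\sim_i g\acts c$ since $g$ is an automorphism. The observation about $F_i^+$ gives $\lambda_i(g\acts c)\equiv_i\lambda_i(c)$ and $\lambda_i(g\acts c')\equiv_i\lambda_i(c')$. If $\lambda_i(c)\equiv_i\lambda_i(c')$, then all four $i$-colours lie in a single $F_i$-orbit and two applications of the preliminary property yield $\tau(c) = \tau(c') = \tau(g\acts c') = \tau(g\acts c)$. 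Otherwise, both $\tau(c)$ and $\tau(g\acts c)$ are the unique $i$-neighbour of $\tau(c') = \tau(g\acts c')$ in $\Delta'$ whose $\lambda'_i$-value equals $[\lambda_i(c)]_i = [\lambda_i(g\acts c)]_i$, so they coincide. I expect the main technical step to be the preliminary lemma about adjacent chambers, which must be teased out of \cref{th:implosion} rather than read off directly; once that is in place the induction is routine, and the reduction to chamber stabilisers is the mechanism that connects everything to the hypothesis that $g$ lies in $\U^+$.
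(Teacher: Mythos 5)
Your proof is correct and takes essentially the same approach as the paper: an induction on the distance from the fixed chamber, split into two cases according to whether the $i$-colours of consecutive chambers lie in the same $F_i$-orbit. One small remark: the appeal to \cref{lem:pluslocal} is unnecessary --- the fact that $\lambda_i(g\acts c)\equiv_i\lambda_i(c)$ already follows from $g\in\U$, since each local action $\sigma_\lambda(g,\P)$ lies in $F_i$ and therefore preserves $F_i$-orbits; your explicit derivation of the adjacent-chamber dichotomy from the two abstract properties of $\tau$ in \cref{th:implosion} is a nice touch that the paper's proof glosses over with a ``by construction''.
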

\begin{proof}
	Let $g\in\U(\F)$ stabilise some chamber $c\in\Delta$ and consider any chamber $d\in\Delta$ together with its image $g\acts d$. We show that $\tau(d)=\tau(g\acts d)$ by induction on $\dist(c,d)$. The base case with $c=d$ is trivial, since $g\acts c=c$. If $\dist(c,d)=n+1$, then let $e\in\Delta$ be such that $\dist(c,e)=n$ and $e\sim_i d$ (for some $i\in I$). We distinguish two cases.
	\begin{enumerate}[label={(\alph*)}]
		\item If $\lambda_i(e)\equiv_i\lambda_i(d)$, i.e.~$e$ and $d$ are harmonious, then $g\acts e$ and $g\acts d$ are harmonious as well. As $\tau(d)=\tau(e)$ by construction, $\tau(e)=\tau(g\acts e)$ by the induction hypothesis, and $\tau(g\acts e)=\tau(g\acts d)$ again by construction, we may conclude that $\tau(d)=\tau(g\acts d)$.
		\item If $\lambda_i(e)\not\equiv_i\lambda_i(d)$, then by construction,
			\begin{itemize}
				\item $\tau(d)$ is the unique chamber $i$-adjacent to $\tau(e)$ in $\Delta'$ such that $\lambda'_i(\tau(d))$ equals the orbit $F_i\acts\lambda_i(d)$, and
				\item $\tau(g\acts d)$ is the unique chamber $i$-adjacent to $\tau(g\acts e)$ in $\Delta'$ such that $\lambda'_i(\tau(g\acts d))$ equals the orbit of $F_i\acts\lambda_i(g\acts d)$.
			\end{itemize}
			Now since $\tau(e)=\tau(g\acts e)$ by the induction hypothesis, and since $\lambda_i(d)$ and $\lambda_i(g\acts d)$ clearly lie in the same orbit of $F_i$, we conclude that also $\tau(d)=\tau(g\acts d)$.
	\end{enumerate}
	By induction, the fibres of $\tau$ are stabilised by $\U_c$ and hence also by $\U^+$.
\end{proof}

\begin{remark}
	\label{rem:fibres}
	Note that the relations $\equiv_i$ uniquely determine the building $\Delta'$, but not the map $\tau$ from \cref{th:implosion}. However, as soon as we decide on two ``compatible'' base chambers $c_0\in\Delta$ and $c'_0\in\Delta'$ such that $[\lambda_i(c_0)] = \lambda_i(c'_0)$, there is a \emph{unique} implosion map $\tau$ mapping $c_0$ to $c'_0$. For this choice of $(c_0,c'_0)$, we then have a canonical morphism
	\[\psi\colon \Aut(\Delta)\to\Aut(\Delta')\]
	defined by $\psi(g)\acts\tau(d) = \tau(g\acts d)$ for every $g\in\Aut(\Delta)$ and $d\in\Delta$. Observe that the kernel of $\psi$ contains precisely those automorphisms $g$ that stabilise all fibres of $\tau$. In other words, \cref{th:implosion2} can be rephrased as the statement that $\U_\Delta(\F)^+ \leq \ker(\psi)$.
	
	On the other hand, note that $\U_\Delta(\F)^+ \neq \ker(\psi)$ in general. Indeed, as an extreme case, let the local groups be \emph{regular} permutation groups, i.e., transitive and with trivial point stabilisers. Then by \cref{lem:pluslocal} we have that $\U_\Delta(\F)^+$ is trivial, whereas an implosion map collapses $\Delta$ to a single chamber, i.e.~in this case the kernel of $\psi$ is the full automorphism group of $\Delta$.
\end{remark}

\begin{definition}
	We call a subset $V\subseteq I$ a \emph{vertex cover} of the diagram if for all $i,j\in I$ such that $m_{ij}=\infty$, we have $i\in V$ or $j\in V$ (or both).
	
	Note that this corresponds to the classical, graph-theoretical notion of a vertex cover if one interprets the diagram as a graph in the natural way, by declaring vertices $i,j\in I$ to be adjacent if and only if $m_{ij}=\infty$.
\end{definition}
\begin{theorem}
	\label{thm:genbychamstab}
	The following are equivalent.
	\begin{enumerate}
		\item $\U_\Delta(\F)^+=\U_\Delta(\F)$;
		\item the local groups $F_i$ are generated by point stabilisers for all $i\in I$, and transitive for all $i$ in some vertex cover of the diagram of $\Delta$.
	\end{enumerate}
\end{theorem}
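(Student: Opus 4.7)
The proof proceeds by first observing that $\U^+ = \U$ if and only if $\U^+$ acts transitively on every $\U$-orbit: since $\U_c \leq \U^+$ for every chamber $c$, any $g \in \U$ with $g \acts c = d$ can be factored as $g = h \cdot s$ with $h \in \U^+$ realising the same motion and $s = h^{-1}g \in \U_c \subseteq \U^+$. Both implications are then analysed through this orbit-transitivity viewpoint.

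For $(i) \Rightarrow (ii)$, the equality $F_i = F_i^+$ is immediate from \cref{lem:pluslocal} (local actions of $\U^+$ at $i$-panels lie in $F_i^+$) combined with \cref{localpermiso} (local actions of $\U$ cover all of $F_i$): the assumption $\U = \U^+$ forces $F_i = F_i^+$. For the vertex cover condition I argue by contrapositive. Supposing $m_{ij} = \infty$ with both $F_i, F_j$ intransitive, I pick an $\{i,j\}$-residue $\R$ (a tree) and invoke the rank-$2$ theorem of Smith \cite[Theorem~23]{smith2017} to obtain $g_\R \in \U_\R \setminus \U_\R^+$. The delicate part is lifting this rank-$2$ obstruction to $\Delta$: using that chamber stabilisers of $\U_\Delta$ fixing a chamber $c' \notin \R$ restrict on $\U_{\{\R\}}$ to stabilisers of $\proj_\R(c') \in \R$ (by equivariance of the projection under stabilisers of $\R$), one shows that the image of $\U^+_\Delta \cap \U_{\{\R\}}$ in $\U_\R \cong \U_{\{\R\}} / \U_{(\R)}$ is contained in $\U_\R^+$. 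If $\U^+_\Delta = \U_\Delta$, a lift $g \in \U_\Delta$ of $g_\R$ (produced by iterating \cref{lem:extendlocaluniversal}) would then yield $g_\R \in \U_\R^+$, contradicting the choice.

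For $(ii) \Rightarrow (i)$, the key building block is that every $f \in F_k$ is realised as the local action of some $h \in \U^+$ at a chosen $k$-panel $\P$: writing $f = f_1 \cdots f_r$ with each $f_s \in (F_k)_{x_s}$ (possible since $F_k = F_k^+$) and applying \cref{lem:extendlocaluniversal} to each factor yields $h_s \in \U_{c_s} \subseteq \U^+$ (where $c_s \in \P$ has colour $x_s$), whose product has local action $f$ at $\P$. Consequently, any gallery step $c \sim_k c'$ between harmonious chambers whose $\lambda_k$-values lie in the same $F_k$-orbit (an \emph{intra-orbit} step) is realisable by $\U^+$. The task thus becomes finding an intra-orbit gallery between any two harmonious chambers. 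When no colour $\lambda_i(c)$ is a fixed point of $F_i$, \cref{lem:nicesubbuildings} applied with $Y_i = F_i \acts \lambda_i(c)$ produces a semiregular subbuilding coinciding with the $\U$-orbit of $c$, through which every gallery is automatically intra-orbit.

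The hard case is when some $\lambda_i(c)$ is a fixed point of $F_i$: then intra-orbit $\sim_i$-steps are forbidden, and the orbit generally splits into components not bridged by intra-orbit galleries alone. Here the vertex cover hypothesis is essential: every $j$ with $m_{ij} = \infty$ has $F_j$ transitive, so $\sim_j$-steps are unrestricted. Combining intra-orbit moves with auxiliary \emph{transport} elements of $\U^+$, constructed by applying \cref{lem:extendlocaluniversal} to non-trivial elements of point stabilisers $(F_k)_{\lambda_k(c')}$ at panels through chambers $c' \notin \U \acts c$ (these lie in $\U_{c'} \subseteq \U^+$ yet move orbit chambers nontrivially), one bridges the components of the orbit. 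The main technical obstacle is the combinatorial bookkeeping required to ensure that iterated transports, together with intra-orbit moves and \cref{lem:closingsquares}, suffice to reach an arbitrary target chamber in the orbit.
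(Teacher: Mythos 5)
Your decomposition of the problem is sensible and the first part of (i)$\Rightarrow$(ii) (that $F_i=F_i^+$, via \cref{lem:pluslocal} and \cref{localpermiso}/\cref{lem:extendlocaluniversal}) is correct and matches the paper. However, both remaining directions contain genuine gaps.

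For the vertex cover part of (i)$\Rightarrow$(ii), the central claim that ``the image of $\U^+_\Delta \cap \U_{\{\R\}}$ in $\U_\R$ is contained in $\U_\R^+$'' is exactly the hard content and your justification does not establish it. An element of $\U_\Delta^+ \cap \U_{\{\R\}}$ is a product $g_1\cdots g_n$ of chamber stabilisers, but the individual factors $g_m$ need not stabilise $\R$, so the ``equivariance of $\proj_\R$ under stabilisers of $\R$'' observation applies to none of them; there is no way to decompose $g|_\R$ along the factors. The paper bypasses this entirely by introducing the implosion construction (\cref{th:implosion}, \cref{th:implosion2}): collapsing each $X_i$ to $X_i/F_i$ yields a surjection $\tau\colon\Delta\to\Delta'$ onto a non-spherical building whenever two intransitive $F_j,F_k$ have $m_{jk}=\infty$, and $\U^+$ preserves the fibres of $\tau$. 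An apartment of type $\{j,k\}$ using two orbit-representatives in each of $X_j,X_k$ then contains harmonious chambers at distance $4$ with distinct $\tau$-images, so they lie in the same $\U$-orbit but different $\U^+$-orbits. This is the mechanism you need, and it is qualitatively different from a tree-residue reduction.

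For (ii)$\Rightarrow$(i), the claim that \cref{lem:nicesubbuildings} applied with $Y_i = F_i\acts\lambda_i(c)$ produces a subbuilding $\Gamma$ ``coinciding with the $\U$-orbit of $c$'' is false. Take rank $2$ with $m_{12}=\infty$, $X_1=\{a,b,c\}$ with $F_1$-orbits $\{a,b\},\{c\}$, and $X_2=\{x,y,z\}$ with $F_2$-orbits $\{x,y\},\{z\}$. Starting from a chamber with colour $(a,x)$, the chamber with colour $(b,x)$ at distance $3$ reached via the gallery $(a,x)\sim_2(a,z)\sim_1(b,z)\sim_2(b,x)$ is harmonious to the start, but the unique geodesic in the tree passes through $2$-colour $z\notin Y_2$, so this chamber lies outside $\Gamma$. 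The $\U$-orbit is therefore strictly larger than $\Gamma$, and the ``intra-orbit gallery'' strategy breaks down even when no starting colour is an $F_i$-fixed point. You also explicitly leave the case of a fixed colour unresolved. The paper's argument instead works with an arbitrary minimal gallery from $c$ to $d$ and distinguishes two cases: if some two \emph{consecutive} chambers on the gallery are harmonious, one handles that single step via the spherical residue argument and reduces the gallery length by induction; if no two consecutive chambers are harmonious, every type appearing on the gallery must have intransitive local group, and the vertex cover assumption then forces all those types to pairwise commute, so the whole gallery lies in a spherical residue where the product structure and $F_j=F_j^+$ finish the job. That case split is the missing idea that your transport/bookkeeping sketch is groping for.
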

\begin{proof}
	First we show the implication (i) $\Rightarrow$ (ii). If $\U^+=\U$, then every local action of $\U$ on an $i$-panel is a permutation in $F_i^+$ by \cref{lem:pluslocal}. At the same time, \cref{lem:extendlocaluniversal} shows that every permutation in $F_i$ can be realised as a local action at an $i$-panel. Hence indeed $F_i=F_i^+$ for all $i\in I$.
	
	\def\colorJ{{\vartriangleleft}}
	\def\colorJJ{{\blacktriangleleft}}	
	\def\colorK{{\vartriangleright}}
	\def\colorKK{{\blacktriangleright}}	
	For the transitivity result, we use an implosion map $\tau\colon\Delta\to\Delta'$ and \cref{th:implosion2}. Suppose by means of contradiction that the indices of the transitive local groups do \emph{not} define a vertex cover of the diagram of $\Delta$, i.e.~that there are intransitive local groups $F_j$ and $F_k$ such that $m_{jk}=\infty$. Then $\Delta'$ is not spherical.
	
	Let $\colorJ,\colorJJ\in X_j$ be two colours in different orbits of~$F_j$ and $\colorK,\colorKK\in X_k$ two colours in different orbits of~$F_k$. Consider an apartment $\mathcal A$ of a $\{j,k\}$-residue with colours only in $\{\colorJ,\colorJJ\}$ and $\{\colorK,\colorKK\}$, so that $\mathcal A$ looks as follows.
	\[\begin{tikzpicture}[x=15mm,y=1mm]
		\node[myvertex,label={below:$\colorJ\colorK$}] (C0) at (0,0) {};
		\node[myvertex,label={below:$\colorJJ\colorK$}] (C1) at (1,1) {};
		\node[myvertex,label={below:$\colorJJ\colorKK$}] (C2) at (2,0) {};
		\node[myvertex,label={below:$\colorJ\colorKK$}] (C3) at (3,1) {};
		\node[myvertex,label={below:$\colorJ\colorK$}] (C4) at (4,0) {};
		\node[myvertex,label={below:$\colorJJ\colorK$}] (C5) at (5,1) {};
		\node (T0) at (-1,1) {\dots};
		\node (T5) at (6,0) {\dots};
		\draw[font=\scriptsize\strut,inner sep=0pt]
			(T0) -- (C0) to node[above] {$j$} (C1)
				to node[above] {$k$} (C2)
				to node[above] {$j$} (C3)
				to node[above] {$k$} (C4)
				to node[above] {$j$} (C5) -- (T5);
	\end{tikzpicture}\]
	Now let $c,d\in\mathcal A$ be two chambers such that $\dist(c,d)=4$. Then $c$ and $d$ are harmonious, hence they lie in the same orbit of $\U$ (\cref{lem:orbits}). At the same time, $\tau$ maps adjacent chambers in $\mathcal A$ to \emph{distinct} adjacent chambers in a $\{j,k\}$-residue in $\Delta'$. Hence $\tau(c)\neq\tau(d)$, and from \cref{th:implosion2} it follows that $c$ and $d$ do not lie in the same orbit of $\U^+$. Thus $\U\neq\U^+$ (in fact, this argument shows that $\U^+$ is not even of finite index in $\U$).
	
	\smallskip
	For the converse (ii) $\Rightarrow$ (i), it is sufficient to show that $\U$ and $\U^+$ have identical orbits on $\Delta$ (since their chamber stabilisers agree). Let $c,d$ be two chambers in the same orbit of $\U$, i.e.~harmonious chambers. We will find an automorphism in $\U^+$ taking $c$ to $d$.
	
	First assume that $c$ and $d$ are contained in some \emph{spherical} residue $\R$ of type $J\subset I$. Recall that
	\[\U_\R(\{F_j\}_{j\in J}) \cong \prod_{j\in J} F_j.\]
	Now, for every $j\in J$, there exists some permutation $f_j\in F_j$ such that $\lambda_j(c)=f_j\acts\lambda_j(d)$. By assumption, $f_j\in F_j=F_j^+$ can be written as a product of permutations each of which stabilises some $j$-colour. Extend these permutations from the $j$-panels containing $c$ to building automorphisms of $\Delta$ using \cref{lem:extendlocaluniversal}, and take their product over all $j\in J$. This gives us an automorphism in~$\U^+$ that stabilises $\R$ and that maps $c$ to a chamber with $J$-colours equal to those of $d$, i.e., to $d$ itself.
	
	Now consider the general case and let $\gamma$ be a minimal gallery from $c$ to $d$. We use induction on the length $n$ of $\gamma$; the case $n=0$ is trivial and the case $n=1$ follows from the previous paragraph. For arbitrary $n > 1$, we have to consider two cases, depending on the number of pairs of consecutive harmonious chambers on $\gamma$.
	\begin{enumerate}[label={(\alph*)}]
		\item If there are no such pairs, let $J_\gamma$ be the set of all $i\in I$ occurring in the type of~$\gamma$. Then for each $i\in J_\gamma$ the local group $F_i$ has at least two orbits (i.e.~$F_i$ is intransitive). By the vertex cover assumption on the local groups, any two elements $i,j\in J_\gamma$ commute in the Coxeter group of $\Delta$. Hence the residue of type $J_\gamma$ containing $\gamma$ is spherical, and the conclusion follows from the paragraph above.
		\item If there is at least one such pair $(x,y)$, then using the induction base $n=1$ we can find an element $g\in\U^+$ such that $g\acts x = y$. Now let $d' = g^{-1}\acts d$ and note that a minimal gallery from $c$ to $d'$ has length at most $n-1$. By the induction hypothesis, there is an element $h\in\U^+$ such that $h\acts c = d'$. Then $gh\in\U^+$ satisfies $gh\acts c = d$.
	\end{enumerate}
	This finishes the proof.
\end{proof}

\begin{theorem}[simplicity]\label{thm:simplicity}
	Let $\Delta$ be a thick, semiregular, right-angled building of irreducible type $I$, with rank $|I|\geq 2$ and parameters $(q_i)_{i\in I}$. Let $\lambda$ be a legal colouring of $\Delta$. For each $i\in I$, let $F_i\leq\Sym(X_i)$. Then the universal group $\U_\Delta^\lambda(\F)$ is (abstractly) simple if and only if the local groups $F_i$ are generated by point stabilisers for all $i\in I$ and transitive for all $i$ in some vertex cover of the diagram of $\Delta$.
\end{theorem}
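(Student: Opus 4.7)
The strategy is to combine \cref{thm:genbychamstab}, which characterises the equality $\U=\U^+$, with a Tits-style simplicity result for $\U^+$, in the spirit of the proof of \cite[Theorem 3.25]{demedts2018} (with the tree case of \cite[Theorem 23]{smith2017} serving as a template).

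For the backward implication, assume that each $F_i$ is generated by its point stabilisers and that the set of indices for which $F_i$ is transitive forms a vertex cover of the diagram. Then \cref{thm:genbychamstab} yields $\U=\U^+$, so it suffices to prove that $\U^+$ is abstractly simple. I would follow Tits' classical simplicity argument: let $N$ be a non-trivial normal subgroup of $\U$, and show that $N=\U$. The key technical input is an independence property (``Property P'') for right-angled buildings, to the effect that the pointwise stabiliser in $\U$ of a panel $\mathcal P$ decomposes as an unrestricted direct product indexed by the wings based at the chambers of $\mathcal P$. Combined with the minimality of the $\U$-action (\cref{lem:uminimal}, applicable because the diagram, being irreducible of rank at least $2$, has no isolated nodes), a standard commutator computation produces, starting from a non-trivial $n\in N$, elements of $N$ fixing larger and larger convex portions of $\Delta$. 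Iterating yields $\U_c\subseteq N$ for every chamber $c$, and hence $\U^+\subseteq N$; thus $N=\U$ and $\U$ is simple.

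For the forward implication, assume $\U$ is abstractly simple. Since $\U^+$ is a normal subgroup of $\U$, either $\U^+=\U$ or $\U^+=1$. If $\U^+=\U$, then \cref{thm:genbychamstab} immediately yields the conditions. If instead $\U^+=1$, every chamber stabiliser is trivial, so by \cref{prop:discrete} each $F_i$ acts freely on $X_i$ and $\U$ acts freely and discretely on $\Delta$. As $q_i\geq 2$ for every $i$, thickness together with \cref{cor:cobounded} forces $\U$ to be non-trivial and infinite. But a non-trivial group acting freely and coboundedly on the CAT(0) cube-complex realisation of an irreducible right-angled building of rank at least $2$ is, by standard graph-product arguments, a free-product-like group admitting an obvious non-trivial abelian quotient (in the simplest case $|X_i|=2$ and $F_i=C_2$, one recovers exactly the right-angled Coxeter group of type $I$ together with its sign-character abelianisation). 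This contradicts the simplicity of $\U$, so $\U^+=\U$ and the conditions follow.

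The main obstacle is to establish Property P in our extended setting, where local groups may be infinite or intransitive; a version of this property was proved in \cite{demedts2018} under a locally finite transitive assumption, but the proof uses only the projection calculus of \cref{prop:parallelism} and the extension lemma \cref{lem:extendlocaluniversal}, both of which remain available here without any finiteness or transitivity restriction. Consequently the argument transfers with only cosmetic modifications, and the discrete case $\U^+=1$ in the forward direction is handled by invoking the structure theory for groups acting freely on CAT(0) cube complexes.
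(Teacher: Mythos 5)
Your overall strategy coincides with the paper's: reduce the characterisation to \cref{thm:genbychamstab} plus the Tits-style simplicity argument of~\cite[Theorem 3.25]{demedts2018}, observing that the latter (built on Property~P, the independence of local actions across a panel) goes through in the present non-locally-finite, possibly-intransitive setting because its ingredients are \cref{prop:parallelism} and \cref{lem:extendlocaluniversal}. The backward implication is therefore handled exactly as in the paper.

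Where you go further is in the forward implication. The paper's proof is terse: it notes that the conditions are necessary for $\U=\U^+$ and cites the Tits argument, leaving the deduction ``$\U$ simple $\Rightarrow \U=\U^+$'' implicit; this does silently require excluding $\U^+=1$ (which can occur when all $F_i$ act freely), so your explicit case split is a worthwhile clarification. However, your justification of the exclusion has a gap. You assert that a non-trivial group acting freely and coboundedly on the CAT(0) realisation of $\Delta$ admits an ``obvious non-trivial abelian quotient'' by ``standard graph-product arguments''. This is not true in general: if each $F_i$ acts regularly and is perfect (say $F_i\cong A_5$ on $q_i=60$ colours), then $\U$ is a graph product of the $F_i$ over the commutation graph, and its abelianisation is the product of the $F_i^{\mathrm{ab}}$, which vanishes. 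Your illustrative example with $|X_i|=2$ and $F_i=C_2$ also sits outside the thick hypothesis. The conclusion (non-simplicity) is nevertheless correct, but for a different reason: a graph product of non-trivial groups over a non-complete graph (which is forced here, since irreducibility with $|I|\geq 2$ gives two non-commuting types) retracts onto any single vertex group with non-trivial kernel, yielding a proper non-trivial normal subgroup. Handling the case where the freely acting $F_i$ are not all transitive---so that $\U$ is no longer a graph product in the naive sense---requires an additional step (e.g.\ restricting to an invariant subbuilding $\Delta_Y$ of \cref{lem:nicesubbuildings} on which the local actions become regular). With the $\U^+=1$ case repaired along these lines, your argument is complete and matches the paper's intended proof.
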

\begin{proof}
	By \cref{thm:genbychamstab}, both conditions on $F_i$ are necessary for $\U$ to coincide with~$\U^+$. For the converse we can follow the proof of \cite[Theorem 3.25]{demedts2018} \emph{ad verbatim}, since the proof of this theorem (or the previous lemmata) does not require the building to be locally finite; the proof essentially demonstrates that every non-trivial normal subgroup of $\U$ contains $\U^+$ as a subgroup.
\end{proof}

\section{Primitivity}
\label{se:prim}

In this final section, we characterise when a universal group acts primitively on the set of building residues of some fixed type. Our main result, \cref{thm:prim}, has been proved by Simon Smith in the rank 2 case in \cite[Theorem 26 (ii)]{smith2017}.

Moreover, as noted by the referee, the main theorem of \cite[Theorem 4.29]{demedts2019} implies that stabilisers of maximal residues are maximal subgroups of $\Aut(\Delta)$, so that the full automorphism group $\Aut(\Delta)$ acts primitively on the set of $J$-residues for every maximal $J\subsetneq I$. We will recover this particular case in \cref{thm:prim}.

We will need Higman's renowned primitivity criterion (\cite{higman67}). For convenience we recall some definitions and state the criterion.
\begin{definition}[orbitals]
	Let $G$ be a group acting on a set $X$. This action induces an action of $G$ on $X\times X$ by $g\acts(x,y)=(g\acts x,g\acts y)$, whose orbits are called \emph{orbitals} of $G$.
\end{definition}
\begin{definition}[orbital graphs]
	The \emph{orbital graph} with respect to an orbital $\mathcal O$ is the (directed) graph with vertex set $X$ and edge set $\mathcal O\subseteq X\times X$.
\end{definition}
Suppose that $G$ is transitive. There is one trivial or \emph{diagonal} orbital $\{(x,x) \mid x\in X\}$, whose orbital graph simply has a loop at each vertex. No other orbital graph has loops.
\begin{theorem}[Higman]
	\label{thm:higman}
	Suppose that $G$ acts transitively on some set $X$. Then $G$ acts primitively on $X$ if and only if every non-diagonal orbital graph is (weakly) connected\footnote{As opposed to \emph{strongly connected}, a directed graph is \emph{weakly connected} or simply \emph{connected} if its underlying undirected graph is connected.}.
\end{theorem}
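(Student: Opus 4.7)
The plan is to use the standard correspondence between primitive actions and the non-existence of proper non-trivial $G$-invariant equivalence relations on $X$. Recall that a transitive action on $X$ is primitive precisely when the only $G$-invariant equivalence relations on $X$ are the diagonal relation and the universal relation. I would take this characterisation as the starting point and translate the connectedness condition on orbital graphs into a statement about $G$-invariant equivalence relations.

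For the direction ``primitive $\Rightarrow$ every non-diagonal orbital graph is connected'', let $\mathcal O$ be a non-diagonal orbital and let $\Gamma$ be its orbital graph. Define $\sim$ on $X$ by declaring $x \sim y$ whenever $x$ and $y$ lie in the same weakly connected component of $\Gamma$. Since $\mathcal O$ is a $G$-orbit in $X \times X$, the group $G$ acts on $\Gamma$ by graph automorphisms, which shows $\sim$ is a $G$-invariant equivalence relation. Since $\mathcal O$ is non-diagonal, there exist distinct $x,y$ with $(x,y) \in \mathcal O$, and hence $x \sim y$, so $\sim$ is strictly coarser than the diagonal relation. Primitivity forces $\sim$ to be the universal relation, i.e.\ $\Gamma$ is (weakly) connected.

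For the converse, suppose $\sim$ is a non-diagonal $G$-invariant equivalence relation on $X$ and choose distinct $x, y$ with $x \sim y$. The orbital $\mathcal O = G \acts (x,y)$ is then non-diagonal, and because $\sim$ is $G$-invariant, every edge $(g \acts x, g \acts y)$ of the corresponding orbital graph $\Gamma$ satisfies $g \acts x \sim g \acts y$. Therefore each weakly connected component of $\Gamma$ is contained in a single $\sim$-class. The hypothesis that $\Gamma$ is connected then forces $X$ to be a single $\sim$-class, so $\sim$ is the universal relation; contrapositively, if every non-diagonal orbital graph is connected then $G$ acts primitively.

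There is no serious obstacle here: the result is essentially a bookkeeping exercise exploiting the fact that orbitals are precisely the $G$-orbits on $X \times X$, so that $G$-invariant equivalence relations on $X$ are in natural correspondence with unions of orbitals that are reflexive, symmetric and transitive; the connectedness hypothesis on non-diagonal orbital graphs is exactly what is needed to rule out such a union being a proper refinement of $X \times X$.
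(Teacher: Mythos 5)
Your proof is correct. The paper does not actually prove this statement; it simply cites Higman's original paper \cite[(1.12)]{higman67}, so there is no ``paper's proof'' to compare against. Your argument is the standard elementary one, reducing primitivity to the non-existence of proper non-trivial $G$-invariant equivalence relations and then observing, in one direction, that the weakly connected components of any orbital graph yield such a relation, and in the other, that any such relation refines the partition into weakly connected components of the orbital graph built from a single related pair. Both implications are carried out correctly (in particular you correctly invoke weak connectedness, which handles the fact that an orbital need not be symmetric), so the argument is complete and self-contained.
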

\begin{proof}
	See \cite[(1.12)]{higman67}.
\end{proof}

We will also need the following folklore result.
\begin{lemma}
\label{lem:primitivenonregular}
	Let $G$ be a primitive, non-regular permutation group on $X$. Consider two distinct elements $x,y\in X$. Then there exists a permutation $g\in G$ such that $g\acts x=x$ but $g\acts y\neq y$.
\end{lemma}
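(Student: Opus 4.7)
The plan is to exhibit a natural $G$-invariant partition of $X$ based on equality of point stabilisers, and then use primitivity (together with faithfulness) to force the block containing $x$ to be trivial. Throughout, I will use that primitivity implies transitivity, so all point stabilisers are conjugate, and that a point stabiliser $G_x$ is a maximal subgroup of $G$.

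First, I would consider the subset
\[ B = \{y \in X : G_y = G_x\} \]
and check that it is a block for the $G$-action. For any $h \in G$, one computes
\[ h \acts B = \{h \acts y : G_y = G_x\} = \{y' \in X : G_{y'} = h G_x h^{-1} = G_{h \acts x}\}, \]
so if $h \acts B \cap B \neq \emptyset$ then $G_{h \acts x} = G_x$, and hence $h \acts B = B$. By primitivity, the block $B$ is either equal to $\{x\}$ or to all of $X$.

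Second, I would rule out $B = X$ using non-regularity. If $B = X$, then $G_x = G_y$ for every $y \in X$, which means $G_x$ fixes every point of $X$. Since $G \leq \Sym(X)$ is by definition a faithful permutation group, this forces $G_x = 1$, and then the transitive action of $G$ would be regular, contradicting our assumption. Therefore $B = \{x\}$, so $G_y \neq G_x$ for every $y \neq x$.

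Third, I would upgrade the inequality $G_y \neq G_x$ to strict non-containment $G_x \not\subseteq G_y$, which is exactly the statement of the lemma. Here I would use that $G_x$ is a maximal subgroup of $G$: if $G_x \subseteq G_y$, then either $G_y = G_x$ (excluded) or $G_y = G$, but the latter is impossible since $G$ is transitive on $X$ with $|X| \geq 2$. Consequently there is some $g \in G_x \setminus G_y$, i.e.\ $g \acts x = x$ while $g \acts y \neq y$.

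The only genuinely delicate point is choosing the right block: working directly with $\Fix(G_x) = \{y : G_x \subseteq G_y\}$ makes the block property less transparent, whereas the finer set $\{y : G_y = G_x\}$ is manifestly a block, and primitivity plus maximality then reconcile the two descriptions in the very last step.
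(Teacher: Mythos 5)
Your proof is correct. The argument is essentially the same as the paper's, just packaged differently: the paper assumes for contradiction that $G_x \subseteq G_y$, uses maximality of $G_x$ (from primitivity) to get $G_x = G_y$, then observes that an element $g$ with $g \acts x = y$ normalises $G_x$ and lies outside it, so maximality forces $G_x \trianglelefteq G$, whence all point stabilisers coincide and $G$ is regular, a contradiction. Your version organises the same facts around the block $B = \{y : G_y = G_x\}$: that $B$ is a block is precisely the statement that $N_G(G_x)$ preserves it, so your dichotomy ``$B = \{x\}$ or $B = X$'' is the block-theoretic shadow of the paper's normaliser chain $G_x \le N_G(G_x) \le G$ together with maximality of $G_x$. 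The payoff is the same in both cases (faithfulness plus ``all stabilisers equal'' forces regularity). One small stylistic gain of your version is that it isolates the two uses of maximality cleanly --- once hidden inside the primitivity/block dichotomy, once explicitly in the final upgrade from $G_y \neq G_x$ to $G_x \not\subseteq G_y$ --- and makes the role of faithfulness explicit, which the paper leaves implicit in the step ``$G_x$ fixes all points, i.e.~$G$ is regular.''
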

\begin{proof}
	Suppose (by means of contradiction) that $G_x\subseteq G_y$. Since $G$ is primitive, $G_x$ is a maximal subgroup, hence $G_x=G_y$. Let $g\in G$ be such that $g\acts x=y$; note that $g\notin G_x$ but that $g$ normalises $G_x$. We can hence write
	\[G_x < N_G(G_x) \norleq G.\]
	Again, as $G_x$ is maximal, it follows that $G_x\norleq G$. But then all point stabilisers are equal to $G_x$, so that $G_x$ fixes all points of $X$, i.e., $G$ is regular---a contradiction.
\end{proof}

\begin{lemma}
\label{lem:welldefinedicolours}
	Let $I=J\sqcup\{k\}$ be a partition of the index set. Let $\R, \R'$ be two distinct $J$-residues, $c,d$ chambers in $\R$, and $c',d'$ chambers in $\R'$, such that $c\sim_k c'$ and $d\sim_k d'$.
	\[\begin{tikzpicture}
		\DrawlabelledEllipse{(-1.25,0)}{.75}{1.5}{0}{$\R$}{150};
		\DrawlabelledEllipse{(1.25,0)}{.75}{1.5}{0}{$\R'$}{30};
		\draw (-1,.75) node[myvertex,label={150:$c$}] {} to node[above] {\scriptsize $k$} (1,.75) node[myvertex,label={30:$c'$}] {}
		 \foreach\i in {1,2,3} {--++ (.25,-.25) --++ (-.25,-.25)} node[myvertex,label={330:$d'$}] {} to node[above] {\scriptsize $k$} (-1,-.75) node[myvertex,label={210:$d$}] {}
		 \foreach\i in {1,2,3} {--++ (-.25,.25) --++ (.25,.25)};
	\end{tikzpicture}\]
	Finally, let $i\in J\setminus \{k\}^\perp$. Then $\lambda_i(c) = \lambda_i(c') = \lambda_i(d) = \lambda_i(d')$.
\end{lemma}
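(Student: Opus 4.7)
I would first observe that the outer equalities $\lambda_i(c)=\lambda_i(c')$ and $\lambda_i(d)=\lambda_i(d')$ are immediate from property~(ii) of \cref{def:colourings}: since $c\sim_k c'$ (respectively $d\sim_k d'$) and $i\ne k$, the colour $\lambda_i$ is constant along each of these $k$-panels. The problem therefore reduces to showing $\lambda_i(c)=\lambda_i(d)$, comparing two chambers of~$\R$.

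My plan is to exhibit a minimal gallery from $c$ to $d$ inside $\R$ whose type contains no occurrence of $i$. First, since $k\notin J$ and the residues $\R,\R'$ are convex, the gate property yields $\proj_\R(c')=c$, $\proj_\R(d')=d$, $\proj_{\R'}(c)=c'$, and $\proj_{\R'}(d)=d'$. Computing $\dist(c,d')$ along both routes, I obtain $\dist(c,d)=\dist(c',d')=:n$ and $\dist(c,d')=n+1$. Choosing minimal galleries of types $(j_1,\ldots,j_n)\in J^n$ from $c$ to $d$ in $\R$ and $(j_1',\ldots,j_n')\in J^n$ from $c'$ to $d'$ in $\R'$, the two concatenations $c\leadsto d\sim_k d'$ and $c\sim_k c'\leadsto d'$ are both minimal galleries from $c$ to $d'$, of types $(j_1,\ldots,j_n,k)$ and $(k,j_1',\ldots,j_n')$. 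Both types therefore represent the same Weyl element $\delta(c,d')\in W$ as reduced expressions.

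I would then invoke the standard fact that in a right-angled Coxeter group any two reduced expressions for the same element are linked by a sequence of elementary homotopies --- adjacent swaps $ab\mapsto ba$ with $m_{ab}=2$ --- since no higher braid relations exist. Each homotopy is a transposition of two adjacent letters, so the $(n+1)$ letter occurrences can be tracked through the sequence as labelled tokens. The unique $k$-token thereby migrates from position $n+1$ in the first word to position $1$ in the second, whence each of the $n$ non-$k$ tokens switches from being left of the $k$-token to being right of it. Since two fixed tokens can reverse their relative order only through an elementary swap that directly transposes them, every non-$k$ token must, at some step, be swapped with the $k$-token; this forces their labels to commute, i.e.\ $j_l\in k^\perp$ for every $l$. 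Because $i\in J\setminus k^\perp$, no $j_l$ equals $i$; the chosen gallery from $c$ to $d$ thus crosses no $i$-panel, giving $\lambda_i(c)=\lambda_i(d)$.

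The main subtlety is making the token-tracking argument precise. I would formalise it by labelling the $n+1$ letter occurrences of the initial word, propagating these labels through each elementary swap (itself a pure transposition of two adjacent labels), and verifying that any swap not involving two fixed labels preserves their relative order. Everything else --- the projection identifications, the length computation, and the conversion of the minimal gallery back into a colour equality --- is then routine combinatorial bookkeeping using convexity of residues.
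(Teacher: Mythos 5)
Your proof is correct, but it takes a genuinely different route from the paper's. The paper argues in a few short steps: with $\P$ (resp.\ $\P'$) the $k$-panel through $c,c'$ (resp.\ $d,d'$), the gate property gives $c = \proj_\P(d)$ and $c' = \proj_\P(d')$, so $\P$ and $\P'$ have distinct projections onto each other and are therefore parallel by \cref{prop:parallelism}~(i); by \cref{prop:parallelism}~(ii) all four chambers then lie in a common residue of type $\{k\}\cup k^\perp$, and since $i\notin\{k\}\cup k^\perp$ the $i$-colour is constant on that residue. You instead re-derive the crucial geometric fact from scratch: comparing the two reduced words $(j_1,\dots,j_n,k)$ and $(k,j_1',\dots,j_n')$ for $\delta(c,d')$, you exploit that all braid moves in a right-angled Coxeter group are commutations, and run a token-tracking (``wire-crossing'') argument showing that the unique $k$-token must be directly transposed with every other token, forcing every $j_l$ to commute with $k$. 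In effect you are reproving the special case of \cref{prop:parallelism}~(ii) that is actually needed. The paper's proof is shorter and modular, black-boxing parallelism via \cite{caprace14}; yours is more elementary and self-contained and makes the right-angled hypothesis (no braid relations of higher order) visibly do the work. Both are valid; your token argument is rigorous as stated, and the distance computations via the gate property are exactly as the paper uses them.
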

\begin{proof}
	Let $\P$ be the $k$-panel containing $c$ and $c'$, and let $\P'$ be the $k$-panel containing $d$ and $d'$. From convexity of residues it follows that $c=\proj_\P(d)$ and $c'=\proj_\P(d')$. Hence, by \cref{prop:parallelism}~(i), the panels $\P$ and $\P'$ are parallel. By \cref{prop:parallelism}~(ii), $c$, $d$, $c'$, $d'$ are contained in a common residue of type $k\cup k^\perp$. Finally, since $i\notin k\cup k^\perp$, the $i$-colours of these chambers are identical.
\end{proof}

\begin{theorem}[primitivity]
	\label{thm:prim}
	Let $J\subsetneq I$. The universal group $\U(\F)$ acts primitively on the set $\Res_J(\Delta)$ of $J$-residues if and only if the following conditions hold:
	\begin{enumerate}
		\item $|I\setminus J|=1$, so that $I=J\sqcup\{k\}$ for some $k\in I$,
		\item $F_k$ is primitive and non-regular,
		\item $F_i$ is transitive for all $i\in I$ such that $m_{ik}=\infty$.
	\end{enumerate}
\end{theorem}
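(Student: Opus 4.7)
The plan is to handle the two implications separately: for the forward direction, I will construct explicit nontrivial $\U$-invariant equivalence relations on $\Res_J(\Delta)$ whenever any of conditions (i)--(iii) fails; for the backward direction, I will verify Higman's primitivity criterion (\cref{thm:higman}) by showing that every non-diagonal orbital graph is connected.

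For the forward direction, primitivity implies transitivity, which by \cref{lem:orbits} forces $F_i$ to be transitive for every $i \in I \setminus J$. To get (i), suppose $|I \setminus J| \geq 2$ and pick $k_1 \in I \setminus J$; the relation $\R \sim \R'$ iff $\R, \R'$ lie in a common $(J \cup \{k_1\})$-residue is $\U$-invariant, and nontrivial since at least two such residues exist (as $J \cup \{k_1\} \subsetneq I$) and each contains at least two distinct $J$-residues (as $q_{k_1} \geq 2$). With $I = J \sqcup \{k\}$ established, primitivity of $F_k$ follows by lifting any nontrivial $F_k$-invariant partition of $X_k$ to the $\U$-invariant partition $\R \sim \R'$ iff $\lambda_k(\R), \lambda_k(\R')$ lie in a common block. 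Non-regularity of $F_k$ is deduced by extending Smith's rank-$2$ argument \cite[Theorem~26(ii)]{smith2017}: if $F_k$ were regular, then \cref{lem:primitivenonregular} combined with the behaviour of $k$-panels (where the point stabiliser becomes trivial and hence stabilisers of distinct $J$-residues become comparable) produces a nontrivial $\U$-invariant partition. Finally, for (iii), if some $i \in J$ with $m_{ik} = \infty$ has intransitive $F_i$ with proper orbit $Y \subsetneq X_i$, then \cref{lem:welldefinedicolours} yields a well-defined symmetric ``bridge $i$-color'' $\lambda_i(\R, \R') \in X_i$ for each pair of $k$-adjacent $J$-residues; the equivalence generated by declaring $\R \sim \R'$ whenever they are $k$-adjacent with $\lambda_i(\R, \R') \in Y$ is $\U$-invariant (local actions at $i$-panels lie in $F_i$, stabilising $Y$) and nontrivial, because each $\{i,k\}$-residue is a tree in which dropping edges with bridge colors outside $Y$ disconnects the local graph.

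For the backward direction, assuming (i)--(iii), I verify Higman's criterion: fix any non-diagonal orbital $\mathcal O$ of $\U$ on $\Res_J(\Delta) \times \Res_J(\Delta)$ and any $\R_0$, and proceed by induction on the minimal number of $k$-panels traversed by a gallery between $\R_0$ and a target $J$-residue to show that every $\R$ lies in the connected component of $\R_0$ in the orbital graph of $\mathcal O$. The inductive step reduces to: if $\R$ is reachable and $\P$ is a $k$-panel through $\R$, then every $J$-residue meeting $\P$ is reachable. Here (ii) is essential: by \cref{localpermiso}, the setwise stabiliser $\U_{\{\R\}} \cap \U_{\{\P\}}$ acts on the $q_k - 1$ other $J$-residues through $\P$ as the point stabiliser $(F_k)_{\lambda_k(\R)}$, which is maximal in $F_k$ by primitivity; non-regularity (through \cref{lem:primitivenonregular}) supplies a nontrivial orbital through $\P$, and maximality collapses the remaining orbitals through $\P$ into the same connected component. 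Condition (iii) is what coordinates the situation across different $k$-panels through $\R$: transitivity of $F_i$ for every $i \in J$ with $m_{ik} = \infty$, combined with \cref{lem:parallellocals} applied to parallel $k$-panels within $(k \cup k^\perp)$-residues, transports $\mathcal O$-arrows between $\R$'s various $k$-panels.

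The main obstacle will be the backward direction: carefully orchestrating the primitivity and non-regularity of $F_k$ with the transitivity of the relevant $F_i$'s into a genuine connectivity statement for each orbital graph. The rank-$2$ base case is \cite[Theorem~26(ii)]{smith2017}, and the higher-rank extension hinges on (iii)---precisely the condition singled out as necessary by its forward-direction counterexample---being exactly what is needed to eliminate the combinatorial splittings induced by intransitive local groups at types $i \in J$ with $m_{ik} = \infty$.
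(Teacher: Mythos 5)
Your high-level split (necessity via invariant partitions, sufficiency via Higman) is reasonable, and Steps (i) and (iii) of the necessity direction are close in spirit to the paper. However, there are two genuine gaps.

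\textbf{The proposed block system for necessity of $F_k$-primitivity is not $\U$-invariant.} You claim that lifting a non-trivial $F_k$-invariant partition $\approx$ of $X_k$ to the relation ``$\R\sim\R'$ iff $\lambda_k(\R)\approx\lambda_k(\R')$'' gives a $\U$-invariant partition of $\Res_J(\Delta)$. This is false in general. For $g\in\U$ and two \emph{non-adjacent} $J$-residues $\R,\R'$, the $k$-colours $\lambda_k(g\acts\R)$ and $\lambda_k(g\acts\R')$ are obtained from $\lambda_k(\R)$ and $\lambda_k(\R')$ by (possibly \emph{different}) local permutations $f,f'\in F_k$ at the respective $k$-panels; $F_k$-invariance of $\approx$ only guarantees $f\acts x \approx f\acts y$, not $f\acts x\approx f'\acts y$. (The paper sidesteps this exactly by running the argument via Higman's criterion on an orbital involving \emph{adjacent} residues, where the relevant $k$-panel is shared and hence the local action is the same. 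Contrast also with the regular case, Step~3 of the paper: there the $k$-colour partition itself \emph{is} a block system, and the induction proving this crucially uses uniqueness of the $F_k$-element realising each colour transition, which is precisely what regularity provides and primitivity does not.)

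\textbf{The inductive step in the sufficiency direction is not justified for an arbitrary orbital.} You fix a non-diagonal orbital $\mathcal O$ and claim the induction reduces to ``if $\R$ is reachable and $\P$ is a $k$-panel through $\R$, then every $J$-residue meeting $\P$ is reachable.'' This step presupposes that $\mathcal O$ has an edge from $\R$ to some other $J$-residue through $\P$; but $\mathcal O$ can be an orbital of pairs at arbitrary $\Gamma$-distance, in which case no edge of the orbital graph of $\mathcal O$ connects $\R$ to anything through $\P$, and the maximality-of-$(F_k)_x$ argument has nothing to act on. The paper's Step~5 handles this with a non-trivial first move: from an arbitrary pair $\R_0\approx\R$ it uses \cref{lem:primitivenonregular} together with \cref{lem:extendlocaluniversal} to manufacture an element $g\in\U$ fixing everything on the $\R_0$-side of the last $k$-panel while moving $\R$ to an \emph{adjacent} residue $g\acts\R$, thereby producing two equivalent residues that do share a $k$-panel. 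This creation step is essential and is missing from your outline; without it, the panel-level argument never gets off the ground. Additionally, the paper proves sufficiency by showing a non-trivial $\U$-invariant equivalence is universal rather than verifying Higman's criterion directly; both are in principle viable, but the equivalence-relation formulation makes the ``transfer to an adjacent pair'' step much cleaner to state.

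A smaller issue: for necessity of (iii), your ``generated equivalence'' is indeed $\U$-invariant, but showing it is non-universal requires arguing that the subgraph of $\Gamma$ retaining only $Y$-coloured edges is disconnected, and the appeal to the $\{i,k\}$-tree structure alone does not rule out connecting paths through other types. The paper avoids this by again working directly with a single orbital graph and an explicit non-adjacent triple $\R_1,\R_2,\R_3$.
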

\begin{proof}
	\setcounter{stepcounter}{0}
	We proceed in five steps.
	
	\begin{proofstep}
		Condition {\upshape(i)} is necessary.
	\end{proofstep}
	
	If $|I\setminus J|\geq 2$, then $J$-residues are contained in (strictly bigger) maximal residues. The maximal residues of a fixed type containing $J$ partition the set of $J$-residues and determine a block system of imprimitivity for the action of $\U$.
	
	\medskip

	In the following steps we may assume that $I=J\sqcup\{k\}$. Define the graph $\Gamma$ with the~set $\Res_J(\Delta)$ of $J$-residues as its vertex set, in which two $J$-residues are adjacent if and only if they contain two $k$-adjacent chambers. Note that $\U$ acts in a natural way on $\Gamma$ by graph automorphisms. We will simply call $J$-residues \textit{adjacent} when there are adjacent in $\Gamma$.
	
	\begin{proofstep}
		Assume that $F_k$ is not primitive. Then $\U$ acts imprimitively on $\Res_J(\Delta)$.		
	\end{proofstep}

	Suppose that $F_k$ acts imprimitively on the $k$-colours $X_k$ and let $\approx$ be a non-trivial equivalence relation on $X_k$ invariant under $F_k$. Let $\P$ be any $k$-panel and lift $\approx$ to an equivalence relation on the chambers of $\P$ in the obvious way, i.e., let $c_1\approx c_2$ in~$\P$ if and only if $\lambda_k(c_1)\approx\lambda_k(c_2)$ in $X_k$.

	If $q_k=2$ (i.e., in the thin case), $F_k\leq\Sym(2)$ being imprimitive means that $F_k$ is trivial. Hence the action of $\U$ on $\Res_J(\Delta)$ is not even transitive, let alone primitive.
	
	If $q_k\geq 3$ (i.e., in the thick case), let $c_1$, $c_2$ and $c_3$ be three chambers in $\P$ such that $\lambda_k(c_1)\approx\lambda_k(c_2)\not\approx\lambda_k(c_3)$. Let $\R_1$, $\R_2$ and $\R_3$ be the $J$-residues containing $c_1$, $c_2$ and $c_3$, respectively.
	\[\begin{tikzpicture}[scale=.6]
		\node[myvertex,label={170:$c_1$}] (d) at (170:1) {};
			\DrawlabelledEllipse{(170:1.75)}{1.2}{1}{170}{$\R_1$}{170};;
		\node[myvertex,label={20:$c_2$}] (c) at (50:1) {};
			\DrawlabelledEllipse{(50:1.75)}{1.2}{1}{50}{$\R_2$}{20};
		\node[myvertex,label={350:$c_3$}] (e) at (290:1) {};
			\DrawlabelledEllipse{(290:1.75)}{1.2}{1}{290}{$\R_3$}{350};;
		\draw (c) -- (d) (e) edge[dotted,thick] (d) edge[dotted,thick] (c);
	\end{tikzpicture}\]
	Consider the orbital graph w.r.t.~the orbital $\U\acts(\R_1,\R_2)$, and note that this graph is a subgraph of $\Gamma$. We claim that $\R_3$ does not lie in the same connected component as $\R_1$ and $\R_2$ in this orbital graph, so that the imprimitivity of $\U$ on $\Res_J(\Delta)$ follows from Higman's theorem (\cref{thm:higman}). The easiest way to see this is by using the (well-defined) $k$-colours of the residues: the action of $\U$ preserves whether or not the $k$-colours of adjacent $J$-residues are equivalent (by \cref{localpermiso} and invariance of $\approx$). It follows that the connected component of $\R_1$ and $\R_2$ in the orbital graph contains only residues with $k$-colour equivalent to $\lambda_k(c_1)$ and $\lambda_k(c_2)$, and in particular does not contain $\R_3$, as claimed.
	
	\begin{proofstep}
		Assume that $F_k$ is regular. Then $\U$ acts imprimitively on $\Res_J(\Delta)$.
	\end{proofstep}
	
	Suppose that $F_k$ acts regularly on the $k$-colours $X_k$ and consider any colour $y\in X_k$, any residue $\R_0\in\Res_J(\Delta)$ and any element $g\in\U$. Let $z\in X_k$ be the colour of $g\acts\R_0$. By regularity, there is a \emph{unique} element $f\in F_k$ such that $f\acts y=z$. We show that $\lambda_k(g\acts \R)=f\acts\lambda_k(\R)$ for each $J$-residue $\R$, using induction on the distance $n$ to $\R_0$.

	The base case $n=0$ is trivial: by definition, $\lambda_k(g\acts\R_0)=z=f\acts y=f\acts\lambda_k(\R_0)$.
	
	Consider a $J$-residue $\R$ at distance $n+1$ from $\R_0$ and let $\R'$ be a neighbour of~$\R$ in~$\Gamma$ at distance $n$ from $\R_0$. Let $x=\lambda_k(\R)$ and $x'=\lambda_k(\R')$, and notice that by the induction hypothesis, $\lambda_k(g\acts \R') = f\acts x'$. Since $g$ is a building automorphism, $g\acts\R$ is again adjacent to $g\acts\R'$ in $\Gamma$. Let $\P$ be the $k$-panel containing the chambers in $\R$ and $\R'$ that are $k$-adjacent. The local action of $g$ at $\P$ is a permutation of $F_k$ and maps $x'$ to $f\acts x'$, so by regularity this local action has to be equal to $f$. Hence $g\acts\R$ has $k$-colour $f\acts x$, or in other words $\lambda_k(g\acts\R)=f\acts\lambda_k(\R)$.
	
	Our claim now follows by induction, and implies that the sets
	\[B(x) = \{\R\in\Res_J(\Delta) \mid \lambda_k(\R)=x\}\]
	of identically coloured residues are blocks of imprimitivity, as $g\acts B(x) = B(f\acts x)$.

	\begin{proofstep}
		Condition {\upshape(iii)} is necessary.
	\end{proofstep}

	Suppose that $F_i$ acts intransitively on $X_i$ for some $i\in I$ such that $m_{ik}=\infty$. From \cref{lem:welldefinedicolours} it follows that the $k$-panels induce well-defined $i$-colours on the edges of~$\Gamma$. Consider an arbitrary $J$-residue $\R_1$ and pick two $i$-adjacent chambers $c\sim_i d$ in $\R_1$ such that $\lambda_i(c)$ and $\lambda_i(d)$ lie in distinct orbits of $F_i$. Let $\R_2$ and $\R_3$ be two $J$-residues containing a chamber that is $k$-adjacent to $c$ and $d$, respectively.
	\[\begin{tikzpicture}
		\DrawlabelledEllipse{(0,1.25)}{.75}{1}{90}{$\R_1$}{270};
		\DrawlabelledEllipse{(-1.5,0)}{.75}{1}{20}{$\R_2$}{115};
		\DrawlabelledEllipse{(1.5,0)}{.75}{1}{340}{$\R_3$}{65};
		\draw (-1.25,.5) node[myvertex] {}
			-- (-.6,1) node[myvertex,label={90:$c$}] {}
			-- (.6,1) node[myvertex,label={90:$d$}] {}
			-- (1.25,.5) node[myvertex] {};
	\end{tikzpicture}\]
	Consider the orbital graph w.r.t.~the orbital $\U\acts(\R_1,\R_2)$ (again a subgraph of $\Gamma$). We claim that $\R_3$ does not lie in the same connected component as $\R_1$ and $\R_2$ in this orbital graph. It suffices to note that the action of $\U$ on $\Gamma$ preserves the $F_i$-orbit of an edge's $i$-colour. Hence the $i$-colours in every connected component of the orbital graph are contained in a single orbit of $F_i$, and in particular is $\R_3$ not contained in the component of $\R_1$ and $\R_2$ by construction.

	\begin{proofstep}
		Assume conditions {\upshape(i), (ii), (iii)} fulfilled. Then $\U$ acts primitively on $\Res_J(\Delta)$.
	\end{proofstep}
	
	Let $\approx$ be a non-trivial $\U$-invariant equivalence relation on the set $\Res_J(\Delta)$ of $J$-residues; we will show that this relation is universal. Consider two equivalent residues $\R_0\approx\R$, consider a shortest path from $\R_0$ to $\R$ in $\Gamma$ and let $\R'$ be the $J$-residue adjacent to $\R$ on this shortest path. Let $c\in\R$ and $c'\in\R'$ be $k$-adjacent chambers and let $\P$ be the $k$-panel containing $c$ and $c'$.

	As $F_k$ is primitive and non-regular, there exists a permutation $f\in F_k$ fixing $\lambda_k(c')$ but not $\lambda_k(c)$ by \cref{lem:primitivenonregular}. By \cref{lem:extendlocaluniversal}, $f$ extends to an automorphism $g\in\U$ stabilising $\P$, fixing $c'$ but not $c$, and fixing all chambers $d$ such that $\proj_\P(d)=c'$. In particular, $g$ fixes $c_0$. Moreover $g\acts c$ is $k$-adjacent to $g\acts c'=c'$.
	\[\begin{tikzpicture}
		\DrawlabelledEllipse[fill=black!5]{(0,1)}{.75}{1}{30}{$\R_0$}{160};
		\DrawEllipse{(1.5,.1)}{.5}{.65}{0};
		\DrawEllipse{(2.75,.65)}{.5}{.65}{0};
		\DrawEllipse{(4,.1)}{.5}{.65}{0};
		\DrawlabelledEllipse{(5.5,.5)}{.75}{1}{10}{$\R'$}{120};
		\DrawlabelledEllipse[fill=black!5]{(7.6,.4)}{1.1}{.8}{20}{$\R$}{60};
		\DrawlabelledEllipse{(6.65,2.25)}{.75}{1}{150}{$g\acts\R$}{80};
		\draw (.5,.5) node[myvertex,label={120:$c_0$}] {} -- (1.25,.25) node[myvertex] {}
			(1.75,.25) node[myvertex] {} -- (2.5,.5) node[myvertex] {}
			(3,.5) node[myvertex] {} -- (3.75,.25) node[myvertex] {}
			(4.25,.25) node[myvertex] {} -- (5,.5) node[myvertex] {};
		\draw (6,.5) node[myvertex,label={150:$c'$}] {}
			-- (6.85,.25) node[myvertex,label={3:$c$}] {}
			-- (6.5,1.65) node[myvertex,label={90:$g\acts c$}] {} -- cycle;
	\end{tikzpicture}\]
	From $\U$-invariance it follows that $\R\approx\R_0=g\acts\R_0\approx g\acts\R$, so we have constructed two equivalent $J$-residues \emph{adjacent in $\Gamma$}.
	
	We now claim that all $J$-residues containing a chamber in $\P$ are equivalent. In order to see this, consider the induced equivalence relation on the $k$-colours~$X_k$, where we define two colours to be equivalent if the $J$-residues of the chambers in~$\P$ with those $k$-colours are equivalent. This gives a non-trivial equivalence relation on $X_k$ invariant under $F_k$ (by \cref{lem:extendlocaluniversal}), which is universal by primitivity of $F_k$. Hence our claim follows, and in particular, we find that $\R\approx\R'\approx g\acts\R$.

	Next, we claim that all $J$-residues that are adjacent to $\R$ are equivalent to $\R$. Let $\R''$ be such a residue adjacent to $\R$, and let $d\in\R$ be $k$-adjacent to a chamber in $\R''$. Consider a minimal gallery $\gamma$ from $c$ to $d$ in $\R$; we will show that $\R \approx \R''$ by means of induction on the gallery length $n$ of $\gamma$. The case $n=0$ is precisely the conclusion of the previous paragraph. For a gallery $\gamma$ of length $n+1$,
	\[ c=c_0 \sim c_1 \sim c_2 \sim \dots \sim c_n=d' \sim c_{n+1}=d , \]
	let $i\in I$ be such that $d' \sim_i d$. There are two options.
	\[\begin{tikzpicture}
		\DrawlabelledEllipse{(5.5,.5)}{.75}{1}{10}{$\R''$}{120};
		\DrawlabelledEllipse[fill=black!5]{(8,.25)}{1.5}{1.25}{20}{$\R$}{80};
		\DrawEllipse{(6.25,-1.5)}{.65}{.5}{60};
		\DrawEllipse{(8.6,-2)}{1}{.75}{20};
		\DrawEllipse{(10.25,-.5)}{.75}{.6}{160};
		\draw (6,.5) node[myvertex] {} -- (6.85,.25) node[myvertex,label={30:$c$}] {}
			-- (7.1,-.5) node[myvertex] {} edge node[myvertex,pos=1] {} (6.5,-1.2)
			-- (7.85,-.75) node[myvertex] {} edge (8.6,-.5) 
			-- (8.1,-1.75) node[myvertex] {} -- (8.9,-1.5) node[myvertex] {} -- (8.6,-.5) node[myvertex,label={100:$d'$}] {}
 			-- node[right,pos=.4] {\scriptsize $i$} (9.1,.1) node[myvertex,label={100:$d$}] {} edge node[myvertex,pos=1] {} (9.9,-.3);
	\end{tikzpicture}
	\qquad
	\begin{tikzpicture}
		\DrawlabelledEllipse{(5.5,.5)}{.75}{1}{10}{$\R''$}{120};
		\DrawlabelledEllipse[fill=black!5]{(8,.25)}{1.5}{1.25}{20}{$\R$}{80};
		\DrawEllipse{(6.25,-1.5)}{.65}{.5}{60};
		\DrawEllipse{(8.6,-2)}{1}{.75}{20};
		\draw (6,.5) node[myvertex] {} -- (6.85,.25) node[myvertex,label={30:$c$}] {}
			-- (7.1,-.5) node[myvertex] {} edge node[myvertex,pos=1] {} (6.5,-1.2)
			-- (7.85,-.75) node[myvertex,label={90:$d'$}] {} edge node[above,pos=.4] {\scriptsize $i$} (8.6,-.5) 
			-- (8.1,-1.75) node[myvertex] {} -- node[below,pos=.6] {\scriptsize $i$} (8.9,-1.5) node[myvertex] {} -- (8.6,-.5) node[myvertex,label={80:$d$}] {};
	\end{tikzpicture}\]

	\begin{enumerate}[label={(\alph*)}]
		\item If $m_{ik}=\infty$, then by transitivity of $F_i$ there exists some permutation $f\in F_i$ mapping $\lambda_i(d')$ to $\lambda_i(d)$. We extend this permutation to an element $g\in\U$ mapping $d'$ to $d$. By the induction hypothesis, all $J$-residues containing a chamber $k$-adjacent to $d'$ are equivalent to $\R$, so it follows from $\U$-invariance that all $J$-residues containing a chamber $k$-adjacent to $g\acts d'=d$ are equivalent to $g\acts\R=\R$ as well.
		\item If $m_{ik}=2$, then all $J$-residues containing a chamber $k$-adjacent to $d$, contain a chamber $k$-adjacent to $d'$ as well. Hence there is nothing to prove.
	\end{enumerate}
	In conclusion, by induction, all $J$-residues adjacent to $\R$ are equivalent to $\R$.

	Repeating the previous arguments (or from another induction on the distance in~$\Gamma$) it follows that all $J$-residues are $\approx$-equivalent, so that $\approx$ is the universal relation. This proves that $\U$ acts primitively on $\Res_J(\Delta)$.
\end{proof}


\nocite{*}
\footnotesize
\bibliographystyle{alpha}
\bibliography{sources}

\end{document}